\begin{document}
\theoremstyle{plain}
\newtheorem{Definition}{Definition}[section]
\newtheorem{Proposition}{Proposition}[section]
\newtheorem{Property}{Property}[section]
\newtheorem{Theorem}{Theorem}[section]
\newtheorem{Lemma}[Theorem]{\hspace{0em}\bf{Lemma}}
\newtheorem{Corollary}[Theorem]{Corollary}
\newtheorem{Remark}{Remark}[section]

\setlength{\oddsidemargin}{ 1cm}  
\setlength{\evensidemargin}{\oddsidemargin}
\setlength{\textwidth}{13.50cm}
\vspace{-.8cm}

\noindent  {\LARGE
\begin{center}
The first two coefficients of the Bergman function expansions\\ for Cartan-Hartogs domains
\end{center}
}

\vskip 20pt
\noindent\text{Zhiming Feng  }\\
\noindent\small {School of Mathematical and Information Sciences, Leshan Normal University, Leshan, Sichuan 614000, P.R. China } \\
\noindent\text{Email: fengzm2008@163.com}

\vskip 20pt

\normalsize \noindent\textbf{Abstract}\quad { Let $\phi$ be a globally defined real K\"{a}hler potential on a domain $\Omega\subset \mathbb{C}^d$, and $g_{F}$ be a K\"{a}hler  metric  on the Hartogs domain $ M=\{(z,w)\in \Omega\times\mathbb{C}^{d_0}: \|w\|^2<e^{-\phi(z)}\}$ associated with the K\"{a}hler
potential $\Phi_{F}(z,w)=\phi(z)+F(\phi(z)+\ln\|w\|^2)$.  Firstly, we obtain explicit formulas of the coefficients $\mathbf{a}_j\;(j=1,2)$ of the Bergman function expansion for the Hartogs domain $( M,g_F)$ in a momentum profile $\varphi$. Secondly, using explicit expressions of $\mathbf{a}_j\;(j=1,2)$, we obtain necessary and sufficient conditions for the coefficients $\mathbf{a}_j\;(j=1,2)$ to be constants. Finally, we obtain all the invariant complete K\"{a}hler metrics on Cartan-Hartogs domains such that their the coefficients $\mathbf{a}_j\; (j=1,2)$ of the Bergman function expansions are constants.
\smallskip\\\
\textbf{Key words:} K\"{a}hler metrics \textperiodcentered \; Coefficients of the Bergman function expansion \;\textperiodcentered \; Hartogs domains \;\textperiodcentered \; Cartan-Hartogs domains
\smallskip\\\
\textbf{Mathematics Subject Classification (2010):} 32Q15 \textperiodcentered \,  53C55

\setlength{\oddsidemargin}{-.5cm}  
\setlength{\evensidemargin}{\oddsidemargin}
\pagenumbering{arabic}
\renewcommand{\theequation}
{\arabic{section}.\arabic{equation}}

 \setcounter{equation}{0}
\section{{Introduction}}

Let $M$ be a domain in $\mathbb{C}^n$, $\phi$ be a K\"{a}hler potential on $M$, and $g$ be a
K\"{a}hler  metric  on $M$ associated with the K\"{a}hler form $\omega=\frac{\sqrt{-1}}{2\pi}\partial\overline{\partial}\phi$. Set
$$\mathcal{H}_{\alpha}=\left\{ f\in \textmd{Hol}(M): \int_{M}|f|^2e^{-\alpha \phi}\frac{\omega^n}{n!}<+\infty\right\},$$
where $\textmd{Hol}(M)$ denotes the space of holomorphic functions on $M$.  For $\alpha>0$,  let $K_{\alpha}(z,\overline{z})$ be the Bergman kernel (namely, the
reproducing kernel) of the Hilbert space $\mathcal{H}_{\alpha}$ if $\mathcal{H}_{\alpha}\neq \{0\}$, the Bergman function  on $M$ associated with the metric $g$  defined by
\begin{equation}\label{eq1.8}
 \varepsilon(\alpha;z)=e^{-\alpha \phi(z)}K_{\alpha}(z,\overline{z}),\;\; z\in M.
\end{equation}
The metric $g$ is called balanced when $\varepsilon(1;z)$ is constant. Balanced metric plays an important  role in the quantization  of a
K\"{a}hler manifold, see Berezin \cite{Berezin}, Cahen-Gutt-Rawnsley \cite{CGR}, Engli\v{s} \cite{E0}, Ma-Marinescu \cite{MM07}, Ma \cite{MA2010} and Lui\'c \cite{Lui}.

Under given some conditions for $(M,\omega)$, $\epsilon(\alpha;z)$ admits an asymptotic expansion as $\alpha\rightarrow +\infty$
\begin{equation}\label{eq1.6}
  \epsilon(\alpha;z)\sim\sum_{j=0}^{\infty} a_j(z)\alpha^{n-j},
\end{equation}
where the expansion coefficients $a_0, a_1, a_2$ in Lu \cite{Lu} and  Engli\v{s} \cite{E2} are given by
\begin{equation}\label{eq1.7}
\left\{  \begin{array}{ll}
    a_0 & =1, \\
    a_1 & = \frac{1}{2}k_g, \\
    a_2 & =\frac{1}{3}\triangle k_g+\frac{1}{24}|R_g|^2-\frac{1}{6}|\text{Ric}_g|^2+\frac{1}{8}k_g^2.
  \end{array}\right.
\end{equation}
 Here $k_g$, $\triangle_g$, $R_g$ and $\text{Ric}_g$  denote  the scalar curvature, the Laplace, the curvature tensor and the Ricci curvature associated with the metric $g$, respectively. If for all sufficiently large positive numbers $\alpha$, $\epsilon(\alpha;z)$ are constants in $z$ on $M$, Loi in \cite{Loi1}  has proved that there exists an asymptotic expansion on $(M,\omega)$ for $\varepsilon(\alpha;z)$ as \eqref{eq1.6} and \eqref{eq1.7}, and all coefficients $a_j$ are constants.  For graph theoretic formulas of coefficients $a_j$, see Xu \cite{X}. For the general reference of the Bergman function expansions, refer to Berezin \cite{Berezin}, Catlin \cite{Cat}, Zelditch \cite{Zeld}, Engli\v{s} \cite{E1}, Dai-Liu-Ma \cite{Dai-Liu-Ma}, Ma-Marinescu \cite{MM07,MM08,MM12},  Berman-Berndtsson-Sj\"ostrand \cite{B-B-S}, Hsiao \cite{HCY} and Hsiao-Marinescu \cite{HM2014}.

In \cite{Donaldson} Donaldson used the first coefficient $a_1$ in the expansion of the Bergman function to give conditions for the existence and uniqueness of constant scalar curvature K\"{a}hler metrics (cscK metrics). This work inspired many papers on the subject since then.

The main purpose of this paper is to study cscK metrics on Hartogs domains such that the coefficients $a_2$  of the Bergman function expansion are constants.  For the study of K\"ahler metrics with the constant coefficients $a_2$, see Loi \cite{Loi1}, Loi-Zuddas \cite{Loi-Zud}, Zedda \cite{Zed}, Feng-Tu \cite{FT},  Loi-Zedda \cite{Loi-Zed2} and Feng \cite{F1}.

In \cite{B-F-T} and \cite{FT2}, for Fock-Bargmann-Hartogs domains and Cartan-Hartogs domains,  we studied balanced metrics associated with K\"ahler forms
\begin{equation*}
  \omega=\frac{\sqrt{-1}}{2\pi}\partial\overline{\partial}\left(\nu\phi(z)-\ln(e^{-\phi(z)}-\|w\|^2)\right),\nu\geq 0.
\end{equation*}
From Lemma \ref{Le:7.1} below, we know that the K\"{a}hler forms of $\mathcal{G}$-invariant K\"{a}hler  metrics  on Cartan-Hartogs domains $\Omega(\mu,d_0)$ can be written as
$$\omega=\frac{\nu\sqrt{-1}}{2\pi}\partial\overline{\partial}\left(\phi(z)+F(e^{\phi(z)}\|w\|^2)\right),\;\nu>0.$$
So in this paper, we will study  K\"ahler metrics associated with K\"ahler forms
\begin{equation*}
 \omega=\frac{\sqrt{-1}}{2\pi}\partial\overline{\partial}\left(\phi(z)+F(\phi(z)+\ln\|w\|^2)\right)
\end{equation*}
on Hartogs domains
$$ M=\left\{(z,w)\in \Omega\times\mathbb{C}^{d_0}: \|w\|^2<e^{-\phi(z)}\right\}.$$

Now we give  main results of this paper, namely the following theorems.

\begin{Theorem}\label{apth:3.3}{
Let $g_{\phi}$ be a complete K\"{a}hler  metric  on the domain $\Omega$ associated with the K\"{a}hler form $\omega_{\phi}=\frac{\sqrt{-1}}{2\pi}\partial\overline{\partial}\phi$, where $\phi$ is a globally defined  K\"{a}hler potential on a domain $\Omega\subset \mathbb{C}^d$, defined a Hartogs domain
$$ M=\left\{(z,w)\in \Omega\times\mathbb{C}^{d_0}: \|w\|^2<e^{-\phi(z)}\right\}.$$
Set $g_F$ is a complete K\"{a}hler  metric  on the domain $M$ associated with the K\"{a}hler form $\omega_F=\frac{\sqrt{-1}}{2\pi}\partial\overline{\partial}\Phi_F$, here
$$\Phi_F(z,w)=\phi(z)+F(\phi(z)+\ln\|w\|^2),\;F(-\infty)=0.$$

Let $k_{g_{\phi}}$, $\Delta_{g_{\phi}}$, $\mathrm{Ric}_{g_{\phi}}$ and $R_{g_{\phi}}$ be the scalar curvature, the Laplace, the Ricci curvature and the curvature tensor on the domain $\Omega$ with respect to the metric $g_{\phi}$, respectively. Put
\begin{equation*}
 a_1=\frac{1}{2}k_{g_{\phi}},\;a_2=\frac{1}{3}\triangle_{g_{\phi}} k_{g_{\phi}}+\frac{1}{24}|R_{g_{\phi}}|^2-\frac{1}{6}|\mathrm{Ric}_{g_{\phi}}|^2+\frac{1}{8}k_{g_{\phi}}^2
\end{equation*}
and
\begin{equation*}
  \mathbf{a}_1=\frac{1}{2}k_{g_F},\;\mathbf{a}_2=\frac{1}{3}\triangle_{g_F} k_{g_F}+\frac{1}{24}|R_{g_F}|^2-\frac{1}{6}|\mathrm{Ric}_{g_F}|^2+\frac{1}{8}k_{g_F}^2.
\end{equation*}

 Then both $\mathbf{a}_1$ and $\mathbf{a}_2$ are constants on $M$ if and only if

 $(\mathrm{i})$
\begin{equation*}
 F(t)=-\frac{1}{A}\ln(1-e^t)
\end{equation*}
and $a_2=0$ for $d=1$, where $A=\frac{d_0-a_1}{d_0+1}$ is a positive constant.

 $(\mathrm{ii})$
\begin{equation*}
 F(t)=-\ln\left(1-e^t\right),
\end{equation*}
$a_1=-\frac{d(d+1)}{2}$ and $a_2=\frac{(d-1)d(d+1)(3d+2)}{24}$ for $d>1$.
}\end{Theorem}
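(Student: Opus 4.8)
The plan is to take as given the explicit momentum-profile formulas for $\mathbf a_1$ and $\mathbf a_2$ established earlier in the paper, and to exploit the product structure of $M=\Omega\times(\text{fibre})$. Write $x=\phi(z)+\ln\|w\|^2$; for each fixed $z\in\Omega$ the quantity $x$ ranges over all of $(-\infty,0)$ as $\|w\|$ varies, while the base invariants $k_{g_\phi}$, $\triangle_{g_\phi}k_{g_\phi}$, $|R_{g_\phi}|^2$, $|\mathrm{Ric}_{g_\phi}|^2$ stay frozen at their values at $z$. In the known formulas $\mathbf a_1$ and $\mathbf a_2$ appear as finite sums in which the momentum profile $\varphi(x)$ and its derivatives multiply these base invariants, together with purely profile-dependent terms. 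Freezing $z$ and demanding constancy in $x$ therefore yields, for each admissible value of the base-invariant data, a set of identities in $x$; varying $z$ afterwards turns the coefficients of the independent base invariants into separate constancy requirements. This separation-of-variables mechanism underlies the whole argument.

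First I would treat $\mathbf a_1=\tfrac12 k_{g_F}$, i.e. the cscK condition. From the explicit formula, collecting the coefficient of $k_{g_\phi}$ and the purely $\varphi$-dependent remainder produces two demands: the structure of the $k_{g_\phi}$-coefficient forces $k_{g_\phi}$ itself to be constant in $z$ (so $a_1$ is a constant), and the remainder, an ODE in $\varphi$, must reduce to a constant. Solving that ODE for $\varphi$ (equivalently for $F$), together with the normalization $F(-\infty)=0$ and the completeness of $g_F$ at the two ends $x\to-\infty$ and $x\to0^-$, singles out the one-parameter family $F(t)=-\tfrac1A\ln(1-e^t)$ with $A>0$. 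At this stage $A$ is tied to the now-constant base scalar curvature through $a_1$ by a relation that depends on $d$; in dimension $d=1$ this relation is exactly the stated $A=\tfrac{d_0-a_1}{d_0+1}$.

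Next I would substitute this $F$ into the $\mathbf a_2$ formula and again separate the $x$- and $z$-dependence. After the substitution the profile terms become explicit rational functions of $e^{x}$, and constancy forces the coefficients of the independent $z$-functions $\big(1,\ k_{g_\phi},\ k_{g_\phi}^2,\ |R_{g_\phi}|^2,\ |\mathrm{Ric}_{g_\phi}|^2,\ \triangle_{g_\phi}k_{g_\phi}\big)$ to satisfy a finite system of algebraic identities in $A$ and the base data. Here the two cases genuinely diverge. For $d=1$ the base is a complex curve, so $|R_{g_\phi}|^2$, $|\mathrm{Ric}_{g_\phi}|^2$ and $\triangle_{g_\phi}k_{g_\phi}$ all reduce to expressions in the (already constant) $k_{g_\phi}$, and the system collapses to the single condition $a_2=0$, leaving $A$, hence $a_1$, free. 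For $d>1$ the invariants are genuinely independent, so the additional identities rigidify everything: they force $A=1$, pin the base down to $a_1=-\tfrac{d(d+1)}{2}$, and then evaluate $a_2=\tfrac{(d-1)d(d+1)(3d+2)}{24}$, precisely the values realized by the complex hyperbolic model $F(t)=-\ln(1-e^t)$.

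Finally, for the converse direction I would verify directly that each candidate $(F,a_1,a_2)$ makes both the profile ODEs and all the algebraic identities hold, so that $\mathbf a_1$ and $\mathbf a_2$ are indeed constant. The main obstacle is computational rather than conceptual: it lies in carrying the bulky $\mathbf a_2$ expression---assembled from $\triangle_{g_F} k_{g_F}$, $|R_{g_F}|^2$, $|\mathrm{Ric}_{g_F}|^2$ and $k_{g_F}^2$---through the substitution of $F$ and in organizing it cleanly in the base-invariant basis, so that the separation is transparent and the dichotomy $d=1$ versus $d>1$ emerges from the (in)dependence of the curvature invariants. Keeping careful track of the endpoint and completeness conditions on $\varphi$ while solving the ODEs is the other delicate point.
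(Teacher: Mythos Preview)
Your overall strategy---take the explicit momentum-profile formulas for $\mathbf a_1,\mathbf a_2$, freeze $z$, and separate the profile variable from the base invariants---is the right one and matches the paper. But there is a genuine gap in the order of operations. You claim that the cscK condition together with $F(-\infty)=0$ and completeness already ``singles out the one-parameter family $F(t)=-\tfrac1A\ln(1-e^t)$.'' This is not true. Solving the second-order ODE coming from $\mathbf a_1=\text{const}$ leaves, beyond the cscK parameter $A$, extra constants: for $d_0=1$ one has $\varphi(x)=A(1+x)^2-B(1+x)+C_1(1+x)^{1-d}+C_2(1+x)^{-d}$, and for $d_0>1,\ d=1$ an extra constant $C$ survives as well. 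These extra terms are compatible with the boundary behaviour $\varphi(0)=0$, $\varphi'(0)=1$ and with completeness at the far end (since they are lower-order in $x$ there), so neither the normalization nor completeness eliminates them. The paper needs the $\mathbf a_2$ condition to kill them: it observes that if $\mathbf a_1,\mathbf a_2$ are constant then so is $|R_{g_F}|^2-4|\mathrm{Ric}_{g_F}|^2$, substitutes the general cscK profile into that combination, and reads off from the highest inverse powers of $(1+x)$ that $C_2=0$, then $C_1=0$ for $d>1$ (and $C=0$ in the $d_0>1,d=1$ case). Only after this does the profile become the quadratic $\varphi(x)=Ax^2+x$ (or $x(1+x)$), and completeness is invoked at the very end to fix the last integration constant $c=1$ in $F$.

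A secondary point: you write $x=\phi(z)+\ln\|w\|^2$, which is the paper's $t$, yet speak of the ``momentum profile $\varphi(x)$,'' which in the paper is a function of $x=F'(t)$. Keep these straight; the paper's formulas for $\mathbf a_1,\mathbf a_2$ are rational in $x=F'(t)$ and in $(1+x)^{-1}$, not in $e^t$, and it is precisely the expansion in inverse powers of $(1+x)$ that makes the elimination of $C_1,C_2$ (and hence the dichotomy $d=1$ versus $d>1$) transparent.
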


Let $\Omega$ be a bounded symmetric domain in $\mathbb{C}^d$, that is, $\Omega$ is bounded and  there exists a holomorphic automorphism $\gamma_{z_0}$ for every $z_0\in \Omega$ such that $\gamma_{z_0}$ is involutive (i.e. $\gamma_{z_0}^2 = id$) and $z_0$ is an isolated fixed point of $\gamma_{z_0}$. If $\Omega$ can not be expressed as the product of two bounded symmetric domains, then $\Omega$ is called an irreducible bounded symmetric domain. If we put the Bergman metric on $\Omega$, then $\Omega$ is a Hermitian symmetric space. By selecting  proper coordinate systems, irreducible bounded symmetric domains can be realized as  circular convex domains. The following assume that irreducible bounded symmetric domains are circle convex domains.

Every Hermitian symmetric space of noncompact type can be realized as a bounded symmetric domain in some $\mathbb{C}^d$ by the Harish-Chandra embedding theorem.  In 1935, E. Cartan has showed that there exist only six types of irreducible bounded symmetric domains. They are four types of classical bounded
symmetric domains ($\Omega_I(m,n), \Omega_{II}(n), \Omega_{III}(n), \Omega_{IV}(n)$) and two exceptional domains ($\Omega_{V}(16), \Omega_{V}(27)$). So irreducible bounded symmetric domains are called Cartan domains.

For an irreducible bounded symmetric domain $\Omega$, we denote by $r, a, b, d, p$ and $N$,  the rank, the characteristic multiplicities, the dimension, the genus, and the generic norm of $\Omega$, respectively; thus
\begin{equation*}
  p=(r-1)a+b+2,\;d=\frac{r(r-1)}{2}a+br+r.
\end{equation*}
For convenience, we list  the characteristic multiplicities, the rank, and the generic norm $N$ for the classical domain $\Omega$ according to its type as the following table.
\begin{equation*}
  \begin{tabular}{|c|c|c|}
     \hline $\Omega$ & $(a,b,r)$ & $N$  \\
     \hline $\Omega_I(m,n)=\{z\in \mathcal{M}_{m,n}:I-z{\overline z}^t>0 \} $ ($1\leq m\leq n)$ & $(2,n-m,m)$ & $\det(I-z{\overline z}^t)$   \\
     \hline $\Omega_{II}(2n)=\{z\in \mathcal{M}_{2n,2n}: z^t=-z,I-z{\overline z}^t>0 \}$ ($n\geq 3$) & $(4,0,n)$ & $\sqrt{\det(I-z{\overline z}^t)}$  \\
     \hline $\Omega_{II}(2n+1)=\{z\in \mathcal{M}_{2n+1,2n+1}: z^t=-z,I-z{\overline z}^t>0 \}$ ($n\geq 2$) & $(4,2,n)$ & $\sqrt{\det(I-z{\overline z}^t)}$  \\
     \hline $\Omega_{III}(n)=\{z\in \mathcal{M}_{n,n}:z^t=z, I-z{\overline z}^t>0 \}$ ($n\geq 2$) & $(1,0,n)$ & $\det(I-z{\overline z}^t)$   \\
     \hline $\Omega_{IV}(n)=\{z\in \mathbb{C}^{n}:1-2z{\overline z}^t+|zz^t|^2>0,  z{\overline z}^t<1\} $ ($n\geq 5$) & $(n-2,0,2)$  & $1-2z{\overline z}^t +|zz^t|^2$   \\
     \hline
   \end{tabular}
\end{equation*}
The above, $\mathcal{M}_{m,n}$ denotes the set of all $m\times n$ matrices $z=(z_{ij})$ with complex entries,  ${\overline z}$ is the
complex conjugate of the matrix $z$, ${z}^t$ is the transpose of the matrix $z$, $I$ denotes the identity matrix, and $z>0$ indicates that the square matrix $z$ is positive definite. For the reference of the irreducible bounded symmetric domains, refer to Hua \cite{Hua} and Faraut-Kaneyuki-Kor\'{a}nyi-Lu-Roos \cite{FKKLR}.

For the Cartan domain $\Omega$ in $\mathbb{C}^d$, a positive real number $\mu$ and a positive integer number $d_0$, let
\begin{equation}\label{e7.2}
 \phi(z):=-\mu\ln N(z,\overline{z}),
\end{equation}
the Cartan-Hartogs domain $\Omega(\mu,d_0)$ is defined by
\begin{equation*}\label{e7.1}
  \Omega(\mu,d_0):=\left\{(z,w)\in  \Omega\times \mathbb{C}^{d_0}\subset \mathbb{C}^{d}\times \mathbb{C}^{d_0}
  : \|w\|^2<e^{-\phi(z)} \right\},
\end{equation*}
where $N$ is the generic norm of $\Omega$, and $\|\cdot\|$ the standard Hermitian norm in $\mathbb{C}^{d_0}$. Below we assume $\frac{\partial^2\phi}{\partial z^t\partial\bar{z}}(0)=\mu I_d$.

From Lemma 3.1 of Ahn-Byun-Park \cite{ABP}, the Cartan-Hartogs domain $\Omega(\mu,d_0)$ is homogeneous if and only if $\Omega$ is the unit ball in $\mathbb{C}^{d}$ and $\mu=1$. For the general reference of Cartan-Hartogs domains, see  Ahn-Byun-Park \cite{ABP},  Feng-Tu \cite{FT},  Loi-Zedda \cite{LZ}, Wang-Yin-Zhang-Roos  \cite{RWYZ}, Zedda \cite{Zed} and references therein.

Let $\mbox{\rm Aut}(\Omega(\mu,d_0))$ be the holomorphic automorphism group of $\Omega(\mu,d_0)$. Assume that $\Omega(\mu,d_0)$ is not the unit ball, from  Wang-Yin-Zhang-Roos \cite{RWYZ}, Ahn-Byun-Park \cite{ABP} and Tu-Wang \cite{TW}, $\mbox{\rm Aut}(\Omega(\mu,d_0))$  be exactly the set of all mappings
$\Upsilon$:
\begin{equation}\label{e7.3}
\Upsilon(z,w)=\left(\gamma(z),\psi(z)wU \right),\;U\in \mathcal{U}(d_0),\;(z,w)\in \Omega(\mu,d_0),
\end{equation}
where $\gamma\in \mbox{\rm Aut}(\Omega)$, $z_0 =\gamma^{-1}(0)$,
\begin{equation}\label{e7.4}
  \psi(z)=\frac {N(z_0,\bar z_0)^{{\mu}/{2}}} {N(z,\bar z_0)^{{\mu}}}
\end{equation}
and $\mathcal{U}(d_0)$ is the unitary group of degree $d_0$ which consisting of all $d_0\times d_0$ unitary matrices.

As corollaries of Theorem \ref{apth:3.3}, we obtain all the invariant complete  metrics on Cartan-Hartogs domains such that their
the coefficients $\mathbf{a}_j\; (j=1,2)$ of the Bergman function expansions are constants.

\begin{Theorem}\label{Th:1.1}{
For a given positive integer $d_0$ and a positive real number $\mu$, let
 $$ \Omega(\mu,d_0):=\left\{(z,w)\in \Omega\times\mathbb{\mathbb{B}}^{d_0}: \|w\|^2<N(z,\overline{z})^{\mu}\right\}$$
 be the Cartan-Hartogs domain, where $N(z,\overline{z})$ is the generic norm of an irreducible bounded symmetric domain $\Omega$ in $\mathbb{C}^{d}$.

Let $\mathcal{G}$ be the group of holomorphic automorphism mappings generated by \eqref{e7.3} on the Cartan-Hartogs domain $\Omega(\mu,d_0)$,
 $g$ be a $\mathcal{G}$-invariant complete K\"{a}hler  metric  on the domain $\Omega(\mu,d_0)$ associated with  the K\"{a}hler form
$\omega_g=\frac{\sqrt{-1}}{2\pi}\partial\overline{\partial}\Phi$, that is
\begin{equation*}
  \partial\bar{\partial}(\Phi\circ\Upsilon)=\partial\bar{\partial}\Phi,\;\forall \;\Upsilon\in \mathcal{G}.
\end{equation*}

Then the first two coefficients $\mathbf{a}_j\; (j=1,2)$ of the Bergman function expansion for $(\Omega(\mu,d_0),\omega_g)$ are constants  if and only if

$\mathrm{(i)}$ $\Omega=\mathbb{B}^d:=\{z\in\mathbb{C}^d:\|z\|^2<1\}$,  and
\begin{equation*}
 \omega_g=-\frac{\nu\sqrt{-1}}{2\pi}\partial\overline{\partial}\ln(1-\|z\|^2-\|w\|^2),\;\nu>0
\end{equation*}
for $d>1$.

$\mathrm{(ii)}$ $\Omega=\mathbb{B}:=\{z\in\mathbb{C}:|z|<1\}$ and
\begin{equation*}
 \omega_g=-\frac{\nu\sqrt{-1}}{2\pi}\partial\overline{\partial}\left\{\mu\ln(1-|z|^2)+\frac{(d_0+1)\mu}{d_0\mu+1}\ln\left(1-\frac{\|w\|^2}{(1-|z|^2)^{\mu}}\right)\right\},\;\nu>0
\end{equation*}
for $d=1$.

 }\end{Theorem}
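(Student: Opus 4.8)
The plan is to reduce the classification of $\mathcal{G}$-invariant complete metrics to the Hartogs-domain calculus already packaged in Theorem \ref{apth:3.3}, and then to read off the admissible base data from the geometry of bounded symmetric domains. First I would invoke Lemma \ref{Le:7.1}: every $\mathcal{G}$-invariant complete K\"ahler form on $\Omega(\mu,d_0)$ equals $\frac{\nu\sqrt{-1}}{2\pi}\partial\overline{\partial}\bigl(\phi(z)+F(e^{\phi(z)}\|w\|^2)\bigr)$ with $\phi(z)=-\mu\ln N(z,\overline z)$ and $\nu>0$. Since scaling a K\"ahler form by $\nu>0$ rescales the Bergman coefficients by $\mathbf{a}_j\mapsto \nu^{-j}\mathbf{a}_j$, constancy is unaffected, so I may set $\nu=1$; substituting $\widehat F(t)=F(e^{t})$ (note $\widehat F(-\infty)=F(0)=0$) turns the potential into $\phi+\widehat F(\phi+\ln\|w\|^2)$, which is exactly the shape $\Phi_F$ of Theorem \ref{apth:3.3}.

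Next I would record the base curvature data. The metric $g_\phi$ on $\Omega$ is $\frac{\mu}{p}$ times the Bergman metric, hence $\mathrm{Aut}(\Omega)$-invariant on a homogeneous space; all its curvature invariants are therefore constant, $g_\phi$ is complete, and it is Einstein with $\mathrm{Ric}_{g_\phi}=-\frac{p}{\mu}g_\phi$. Consequently $k_{g_\phi}=-\frac{pd}{\mu}$, $a_1=-\frac{pd}{2\mu}$ and $\triangle_{g_\phi}k_{g_\phi}=0$. With these data and the completeness assumption on $g_F$, Theorem \ref{apth:3.3} applies verbatim, and the argument splits on $d$. For $d=1$ the only irreducible bounded symmetric domain is $\Omega=\mathbb{B}$ (so $p=2$), and the one-complex-dimensional identities $|R_{g_\phi}|^2=|\mathrm{Ric}_{g_\phi}|^2=k_{g_\phi}^2$ give $a_2=\tfrac13\triangle_{g_\phi}k_{g_\phi}=0$ automatically; thus case $(\mathrm{i})$ of Theorem \ref{apth:3.3} holds, $A=\frac{d_0-a_1}{d_0+1}$ with $a_1=-1/\mu$ yields $A^{-1}=\frac{(d_0+1)\mu}{d_0\mu+1}$, and unwinding $\widehat F(t)=-\frac1A\ln(1-e^t)$ together with the restored factor $\nu$ reproduces precisely the form in Theorem \ref{Th:1.1}$(\mathrm{ii})$.

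For $d>1$, case $(\mathrm{ii})$ of Theorem \ref{apth:3.3} forces $a_1=-\frac{d(d+1)}{2}$ and $a_2=\frac{(d-1)d(d+1)(3d+2)}{24}$. The first equation, via $a_1=-\frac{pd}{2\mu}$, pins $\mu=\frac{p}{d+1}$, which normalizes every candidate domain to $\mathrm{Ric}_{g_\phi}=-(d+1)g_\phi$, hence $|\mathrm{Ric}_{g_\phi}|^2=(d+1)^2 d$ and $k_{g_\phi}^2=(d+1)^2 d^2$; the second equation then reduces to a single condition on $|R_{g_\phi}|^2$ alone. The main obstacle is this last step: I must compute $|R_{g_\phi}|^2$ for each type of irreducible bounded symmetric domain in terms of its invariants $(a,b,r)$ under the normalization $\mu=p/(d+1)$, and show that the resulting $a_2$ equals $\frac{(d-1)d(d+1)(3d+2)}{24}$ if and only if $\Omega$ has constant holomorphic sectional curvature, i.e. $\Omega=\mathbb{B}^d=\Omega_I(1,d)$ with $p=d+1$ and therefore $\mu=1$. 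This is a finite but genuinely computational check against the classification table, where the ball saturates the relevant curvature identity while every higher-rank type yields a strictly different $|R_{g_\phi}|^2$.

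Once $\Omega=\mathbb{B}^d$ and $\mu=1$ are isolated, $\widehat F(t)=-\ln(1-e^t)$ collapses the potential $\phi+\widehat F(\phi+\ln\|w\|^2)$ to $-\ln(1-\|z\|^2-\|w\|^2)$, and restoring the overall factor $\nu$ gives Theorem \ref{Th:1.1}$(\mathrm{i})$. The converse (sufficiency) directions in both cases are immediate from the ``if'' part of Theorem \ref{apth:3.3}, since for the base data just identified the required values of $a_1$ and $a_2$ hold by the computations above, so the stated forms do produce constant $\mathbf{a}_1$ and $\mathbf{a}_2$. I would present the $d=1$ and $d>1$ curvature computations as the technical core, reserving the reduction via Lemma \ref{Le:7.1} and the scaling normalization as preliminary bookkeeping.
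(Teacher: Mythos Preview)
Your reduction via Lemma \ref{Le:7.1}, the scaling normalization, and the $d=1$ argument all match the paper's proof; the dimension-one curvature identity $|R_{g_\phi}|^2=|\mathrm{Ric}_{g_\phi}|^2=k_{g_\phi}^2$ is a clean way to get $a_2=0$ there.

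The genuine divergence is in the $d>1$ step. You propose to compute $|R_{g_\phi}|^2$ type by type against the classification table and verify that the required value of $a_2$ singles out the ball. The paper instead never touches $|R_{g_\phi}|^2$ directly: because $(\Omega,g_\phi)$ is homogeneous, the Bergman function $\epsilon(\alpha;z)$ in \eqref{e7.43} is a constant explicit polynomial in $\alpha$, and $a_1,a_2$ are read off as its top coefficients (this is how \eqref{eq2.50}--\eqref{eq2.51} are obtained). The characterization ``$a_1=-\tfrac{d(d+1)}{2}$ and $a_2=\tfrac{(d-1)d(d+1)(3d+2)}{24}$ iff $\Omega=\mathbb{B}^d$, $\mu=1$'' is then quoted from \cite{FT}, where it was already established by analysing that polynomial. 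Your route is more geometric and self-contained, but it trades a citation for a nontrivial case-by-case curvature calculation; the paper's route avoids curvature tensors entirely at the price of relying on the explicit Bergman-function product formula and an external reference. Either way the logical structure you outline is sound.
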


\begin{Remark}

$\mathrm{(i)}$ From \cite{LZ}, for $\nu>d$, K\"{a}hler metrics associated with
$$\omega=-\frac{\nu\sqrt{-1}}{2\pi}\partial\overline{\partial}\ln(1-\|z\|^2)$$
 are balanced on unite balls $\mathbb{B}^d$.

$\mathrm{(ii)}$ Using methods of \cite{B-F-T}, we can prove that for $\nu>\frac{1}{\mu}+d_0$, K\"{a}hler metrics associated with
$$ \omega=-\frac{\nu\sqrt{-1}}{2\pi}\partial\overline{\partial}\left\{\mu\ln(1-|z|^2)+\frac{(d_0+1)\mu}{d_0\mu+1}\ln\left(1-\frac{\|w\|^2}{(1-|z|^2)^{\mu}}\right)\right\}$$
are balanced on Cartan-Hartogs domains
$$\left\{(z,w)\in \mathbb{B}\times\mathbb{C}^{d_0}:\|w\|^2<(1-|z|^2)^{\mu}\right\}.$$
\end{Remark}

To prove Theorem \ref{apth:3.3}, let
\begin{equation*}
  x=F'(t),\;\varphi(x)=F''(t),
\end{equation*}
then the scalar curvature of $g_F$ is given by a linear second-order differential expression in $\varphi(x)$. Consequently, the $\varphi(x)$ is an explicit quadratic  function of $x$ when both $\mathbf{a}_1$ and $\mathbf{a}_2$ are constants. This method referred to as the momentum construction (see \cite{Calabi}, \cite{Hwang-Singer}), the function $\varphi(x)$ is called the momentum profile of $\omega_F$.

The paper is organized as follows. In Section 2, by calculating the scalar curvature, the squared norm of the Ricci curvature tensor, the squared norm of the curvature tensor, and the Laplace of the scalar curvature, we obtain explicit expressions of the coefficients $\mathbf{a}_j\;(j=1,2)$ of the Bergman function expansion for $(M,g_F)$ in the momentum profile $\varphi(x)$. In Section 3, using the expressions of the coefficients $\mathbf{a}_j\;(j=1,2)$ of the Bergman function expansion for $(M,g_F)$, we obtain an explicit expression of the function $F$ when $\mathbf{a}_1$ and $\mathbf{a}_2$ on Hartogs domain $(M,g_F)$  are constants, thus we obtain necessary and sufficient conditions for the coefficients $\mathbf{a}_j\;(j=1,2)$ to be constants. In Section 4, we first give the general expressions of holomorphic invariant K\"{a}hler metrics on Cartan-Hartogs domain $\Omega(\mu,d_0)$, and then give all invariant complete K\"{a}hler metrics such that their the coefficients $\mathbf{a}_j\; (j=1,2)$ of the Bergman function expansions are constants.

\setcounter{equation}{0}
\section{The first two coefficients of the Bergman function expansions for Hartogs domains}

The following we first compute the scalar curvature, the squared norm of the Ricci curvature tensor,  and the Laplace of the scalar curvature for the metric $g_F$ on $M$. Secondly, we obtain an expression of the squared norm of the curvature tensor. Finally we get expressions of the coefficients $\mathbf{a}_j\;(j=1,2)$ of the Bergman function expansion on $(M,g_F)$. As applications, we obtain the necessary and sufficient conditions for the coefficients  $\mathbf{a}_j\;(j=1,2)$ to be constants when $\varphi(x)=x(1+x)$ or $\varphi(x)=Ax^2+x$ with $d=1$ .

To prove the Theorem \ref{Th:5.1}, we need the following Lemma \ref{Le:4.1} and Lemma \ref{Le:4.2}.

\begin{Lemma}\label{Le:4.1}{
Let $\phi$ be a globally defined real K\"{a}hler potential on a domain $\Omega$, $F$ be a real function on $[-\infty,0)$ and
$$\Phi_F(z,w)=\phi(z)+F(t),$$
where $z\in \mathbb{C}^d,w\in\mathbb{C}^{d_0}$, and
\begin{equation*}\label{e4.1}
  t:=\phi(z)+\ln r^2,\;r^2:=\|w\|^2.
\end{equation*}

Set
\begin{equation*}\label{e4.2}
Z=(z,w),\;  T\equiv(T_{i\bar{j}}):=\frac{\partial^2\Phi_F}{\partial Z^t\partial \overline{Z}}\equiv\left(
                                                                \begin{array}{cc}
                                                                  T_{1} & T_{2} \\
                                                                  T_{3} & T_{4} \\
                                                                \end{array}
                                                              \right)\equiv\left(
                                                                     \begin{array}{cc}
                                                                      \frac{\partial^2\Phi_F}{\partial z^t\partial \bar z}  & \frac{\partial^2\Phi_F}{\partial z^t\partial \bar w} \\
                                                                       \frac{\partial^2\Phi_F}{\partial w^t\partial \bar z} & \frac{\partial^2\Phi_F}{\partial w^t\partial \bar w} \\
                                                                     \end{array}
                                                                   \right),
\end{equation*}
and
\begin{equation*}\label{e4.3}
T^{-1}\equiv(T^{\bar{i}j}):=\left(
         \begin{array}{cc}
           (T^{-1})_{1} & (T^{-1})_{2} \\
           (T^{-1})_{3} & (T^{-1})_{4} \\
         \end{array}
       \right).
\end{equation*}
Then
\begin{equation}\label{e4.4}
  T_1=(1+F')\frac{\partial^2\phi}{\partial z^t\partial \bar z}+F'' \frac{\partial\phi}{\partial z^t}\frac{\partial\phi}{\partial \bar{z}},\;\; T_2=\frac{F''}{r^2}\frac{\partial\phi}{\partial z^t}w,
\end{equation}
\begin{equation}\label{e4.6}
  T_3=\frac{F''}{r^2}\bar{w}^t\frac{\partial\phi}{\partial \bar{z}},\;\;  T_4=\frac{F'}{r^2}I_{d_0}+\frac{F''-F'}{r^4}\bar{w}^tw,
\end{equation}
\begin{equation}\label{e4.8}
   \det T=\frac{1}{r^{2d_0}}(F')^{d_0-1}F''(1+ F')^d\det(\frac{\partial^2\phi}{\partial z^t\partial \bar z}),
\end{equation}
\begin{equation}\label{e4.9}
  (T^{-1})_1=\frac{1}{1+F'}\left(\frac{\partial^2\phi}{\partial z^t\partial \bar z}\right)^{-1},
\end{equation}
\begin{equation}\label{e4.10}
  (T^{-1})_2=-\frac{1}{1+F'}\left(\frac{\partial^2\phi}{\partial z^t\partial \bar z}\right)^{-1}\frac{\partial\phi}{\partial z^t}w,
\end{equation}
\begin{equation}\label{e4.11}
  (T^{-1})_3=-\frac{1}{1+F'}\bar{w}^t\frac{\partial\phi}{\partial \bar{z}}\left(\frac{\partial^2\phi}{\partial z^t\partial \bar z}\right)^{-1},
\end{equation}
\begin{eqnarray}
\label{e4.12}    (T^{-1})_4   & =& \frac{r^2}{F'}I_{d_0}+\left(\frac{1}{F''}-\frac{1}{F'}\right)\overline{w}^tw+\frac{1}{1+F'}\bar{w}^t\frac{\partial\phi}{\partial \bar{z}}\left(\frac{\partial^2\phi}{\partial z^t\partial \bar z}\right)^{-1}\frac{\partial\phi}{\partial z^t}w.
\end{eqnarray}
Where $Z^t$ and $\overline{Z}$ denote the transpose and the conjugate of the row vector $Z=(z,w)$, respectively, $I_{d_0}$ denotes the identity matrix of order $d_0$, and symbols
$$
\frac{\partial}{\partial z^t}:=\left(\frac{\partial}{\partial z_1},\cdots,\frac{\partial}{\partial z_d}\right)^t,\;
\frac{\partial}{\partial \bar{z}}:=\left(\frac{\partial}{\partial \bar{z_1}},\cdots,\frac{\partial}{\partial \bar{z_d}}\right),\;
\frac{\partial^2}{\partial z^t\partial \bar z}:=\left(\frac{\partial^2}{\partial z_i\partial \bar z_j}\right).$$
 }\end{Lemma}

\begin{proof}[Proof]
By direct computation,  it follows that \eqref{e4.4} and \eqref{e4.6}.

Using \eqref{e4.4} and \eqref{e4.6}, we have
\begin{equation*}\label{e4.13}
T_4^{-1}=\frac{r^2}{F'}I_{d_0}+(\frac{1}{F''}-\frac{1}{F'})\overline{w}^tw,
\end{equation*}

\begin{equation*}\label{e4.14}
T_2T_4^{-1}T_3=F''\frac{\partial\phi}{\partial z^t}\frac{\partial\phi}{\partial \bar{z}}
\end{equation*}
and
\begin{equation*}\label{e4.15}
T_1-T_2T_4^{-1}T_3=(1+F')\frac{\partial^2\phi}{\partial z^t\partial \bar z}.
\end{equation*}
So we get
\begin{eqnarray*}
   \det T &=&\det(T_1-T_2T_4^{-1}T_3)\det T_4  \\
   &=& \frac{1}{r^{2d_0}}(F')^{d_0-1}F''(1+ F')^d\det(\frac{\partial^2\phi}{\partial z^t\partial \bar z}),
\end{eqnarray*}
\begin{equation*}\label{e4.16}
(T_1-T_2T_4^{-1}T_3)^{-1}=\frac{1}{1+F'}\left(\frac{\partial^2\phi}{\partial z^t\partial \bar z}\right)^{-1},
\end{equation*}
\begin{equation*}\label{e4.17}
-(T_1-T_2T_4^{-1}T_3)^{-1}T_2T_4^{-1}=-\frac{1}{1+F'}\left(\frac{\partial^2\phi}{\partial z^t\partial \bar z}\right)^{-1}\frac{\partial\phi}{\partial z^t}w,
\end{equation*}
\begin{equation*}\label{e4.18}
-T_4^{-1}T_3 (T_1-T_2T_4^{-1}T_3)^{-1}=-\frac{1}{1+F'}\bar{w}^t\frac{\partial\phi}{\partial \bar{z}}\left(\frac{\partial^2\phi}{\partial z^t\partial \bar z}\right)^{-1}
\end{equation*}
and
\begin{eqnarray*}
\nonumber       & & T_4^{-1}+T_4^{-1}T_3 (T_1-T_2T_4^{-1}T_3)^{-1}T_2T_4^{-1} \\
\label{e4.19}      &=& \frac{r^2}{F'}I_{d_0}+\left(\frac{1}{F''}-\frac{1}{F'}\right)\overline{w}^tw+\frac{1}{1+F'}\bar{w}^t\frac{\partial\phi}{\partial \bar{z}}\left(\frac{\partial^2\phi}{\partial z^t\partial \bar z}\right)^{-1}\frac{\partial\phi}{\partial z^t}w.
\end{eqnarray*}

Since
\begin{equation*}\label{e4.20}
  T^{-1}=\left(
           \begin{array}{ll}
             (T_1-T_2T_4^{-1}T_3)^{-1} & -(T_1-T_2T_4^{-1}T_3)^{-1}T_2T_4^{-1} \\
             -T_4^{-1}T_3 (T_1-T_2T_4^{-1}T_3)^{-1} & T_4^{-1}+T_4^{-1}T_3 (T_1-T_2T_4^{-1}T_3)^{-1}T_2T_4^{-1} \\
           \end{array}
         \right),
\end{equation*}
we have \eqref{e4.9}, \eqref{e4.10}, \eqref{e4.11} and \eqref{e4.12}.
\end{proof}

\begin{Remark}\label{Re:4.1}
Under assumptions of Lemma \ref{Le:4.1},
from
\begin{eqnarray*}
   && \left(
    \begin{array}{cc}
      I_d & -T_2T_4^{-1} \\
      0 & I_{d_0} \\
    \end{array}
  \right)\left(
           \begin{array}{cc}
             T_1 & T_2 \\
             T_3 & T_4 \\
           \end{array}
         \right)\left(
                  \begin{array}{cc}
                    I_d & 0 \\
                    -T^{-1}_4T_3 & I_{d_0} \\
                  \end{array}
                \right)  \\
   &=& \left(
                          \begin{array}{cc}
                           (1+F')\frac{\partial^2\phi}{\partial z^t\partial \bar z}  & 0 \\
                            0 & \frac{F'}{r^2}I_{d_0}+\frac{F''-F'}{r^4}\bar{w}^tw \\
                          \end{array}
                        \right)
\end{eqnarray*}
 we obtain that $\Phi_F$ is a K\"{a}hler potential on
 $$ M^{*}=\left\{(z,w)\in \Omega\times\mathbb{C}^{d_0}: 0<\|w\|^2<e^{-\phi(z)}\right\}$$
if and only if

 \begin{equation}\label{ap2.1}
   (1+F')\frac{\partial^2\phi}{\partial z^t\partial \bar z}>0
 \end{equation}
 and
\begin{equation}\label{ap2.2}
    \frac{F'}{r^2}I_{d_0}+\frac{F''-F'}{r^4}\bar{w}^tw>0.
\end{equation}

For the case of $d_0=1$,  \eqref{ap2.1} and \eqref{ap2.2} are equivalent to
\begin{equation}\label{ap2.3}
 1+F'(t)>0,\;\frac{F''(t)}{e^t}>0,\;t\in (-\infty,0),
\end{equation}
respectively.

For the case of $d_0>1$,  by the eigenvalues of the matrix $ \frac{F'}{r^2}I_{d_0}+\frac{F''-F'}{r^4}\bar{w}^tw$ are
\begin{equation*}
  \frac{F'(t)}{r^2},\ldots,\frac{F'(t)}{r^2},\frac{F''(t)}{r^2},
\end{equation*}
so \eqref{ap2.1} and \eqref{ap2.2} are equivalent to
\begin{equation}\label{ap2.4}
\frac{F'(t)}{e^t}>0,\;\frac{F''(t)}{e^t}>0,\;t\in (-\infty,0),
\end{equation}
respectively.
\end{Remark}

\begin{Lemma}\label{Le:4.2}{
Let $\phi$ be a globally defined real K\"{a}hler potential on a domain $\Omega$,
$$\Phi_F(z,w)=\phi(z)+F(\phi(z)+\ln \|w\|^2)$$
and
$$ M=\left\{(z,w)\in \Omega\times\mathbb{C}^{d_0}: \|w\|^2<e^{-\phi(z)}\right\}.$$

For given $(z_0,w_0)\in  M$, let
\begin{equation*}\label{e4.21}
  \widetilde{\phi}(u):=\phi(u+z_0)-\phi(z_0)-\frac{\partial\phi}{\partial z}(z_0)u^t-\frac{\partial\phi}{\partial \bar{z}}(z_0)\overline{u}^t,
\end{equation*}
\begin{equation*}\label{e4.22}
 \widetilde{\Omega}:=\{u\in \mathbb{C}^d: u+z_0\in \Omega\},
\end{equation*}

\begin{equation*}\label{e4.23}
  \widetilde{M}:=\left\{(u,v)\in \widetilde{\Omega}\times \mathbb{C}^{d_0}:\|v\|^2<e^{-\widetilde{\phi}(u)}\right\}
\end{equation*}
and
\begin{equation*}\label{e4.24}
 \widetilde{\Phi}_F(u,v):=\widetilde{\phi}(u)+F(\widetilde{\phi}(u)+\ln \|v\|^2).
\end{equation*}

Define the holomorphic mapping $\Upsilon$
\begin{equation*}\label{e4.25}
\begin{array}{rll}
 \Upsilon:  M & \rightarrow & \widetilde{M}, \\
  (z,w) & \mapsto & (u,v)=\left(z-z_0,e^{\frac{1}{2}\phi(z_0)+\frac{\partial\phi}{\partial z}(z_0)(z-z_0)^t}w\right).
\end{array}
\end{equation*}
Then
$$\phi(z)+\ln \|w\|^2=\widetilde{\phi}(u)+\ln \|v\|^2$$
and
\begin{equation*}\label{e4.26}
  \partial\bar{\partial}\Phi_F=\partial\bar{\partial}(\widetilde{\Phi}_F\circ \Upsilon).
\end{equation*}
 }\end{Lemma}

\begin{proof}[Proof]
The proof is trivial, we omit it.
\end{proof}

By Lemma \ref{Le:4.2}, the scalar curvature, the Laplace, the squared norm of the curvature tensor and the squared norm of the
Ricci curvature at $(z_0,w_0)$ associated with the K\"{a}hler potential $\Phi_F$  on the domain $ M$ are equal to the scalar curvature, the Laplace, the squared norm of the curvature tensor and the squared norm of the Ricci curvature at $(0,e^{\frac{1}{2}\phi(z_0)}w_0)$ associated with the K\"{a}hler potential $\widetilde{\Phi}_F$  on the domain $\widetilde{M}$, respectively. For convenience, the following we assume that  $0\in \Omega$ and
\begin{equation*}\label{e4.17}
  \phi(0)=0,\;\frac{\partial\phi}{\partial z^t}(0)=0,\;\frac{\partial\phi}{\partial \bar{z}}(0)=0.
\end{equation*}

\begin{Theorem}\label{Th:5.1}{Assume that $\phi$
is a globally defined  K\"{a}hler potential on a domain $\Omega\subset \mathbb{C}^d$. Let $g_{\phi}$ be a K\"{a}hler  metric  on the domain $\Omega$ associated with the K\"{a}hler form $\omega_{\phi}=\frac{\sqrt{-1}}{2\pi}\partial\overline{\partial}\phi$, and $g_F$ be a K\"{a}hler  metric  on the domain $ M$ associated with the K\"{a}hler form $\omega_F=\frac{\sqrt{-1}}{2\pi}\partial\overline{\partial}\Phi_F$, where
$$\Phi_F(z,w)=\phi(z)+F(t)$$
with $t=\phi(z)+\ln\|w\|^2$ is a K\"{a}hler potential on a Hartogs domain
$$ M=\left\{(z,w)\in \Omega\times\mathbb{C}^{d_0}: \|w\|^2<e^{-\phi(z)}\right\}.$$

Set
\begin{equation}\label{e5.02}
  G=(d_0-1)\ln F'+\ln F''+d\ln(1+F'),
\end{equation}

\begin{equation}\label{e5.01}
 \psi_1=-d\frac{G'}{1+F'}+(d_0-1)\frac{d_0-G'}{F'}-\frac{G''}{F''}
\end{equation}
and
\begin{equation}\label{e5.03}
  \psi_2=d\left(\frac{G'}{1+F'}\right)^2+(d_0-1)\left(\frac{G'-d_0}{F'}\right)^2+\left(\frac{G''}{F''}\right)^2.
\end{equation}

Then
\begin{equation}\label{e5.1}
  k_{g_F}=\frac{1}{1+F'}k_{g_{\phi}}+\psi_1,
\end{equation}
\begin{equation}\label{e5.2}
  |\mathrm{Ric}_{g_F}|^2=\frac{1}{(1+F')^2}|\mathrm{Ric}_{g_{\phi}}|^2-\frac{2G'}{(1+F')^2} k_{g_{\phi}}+\psi_2,
\end{equation}
\begin{equation}\label{e5.3}
  \triangle_{g_F} k_{g_F}=\frac{1}{(1+F')^2}(\triangle_{g_{\phi}}k_{g_{\phi}}) +\frac{1}{F''}\frac{\partial^2k_{g_F}}{\partial t^2}+\left(\frac{d}{1+F'}+\frac{d_0-1}{F'}\right) \frac{\partial k_{g_F}}{\partial t},
\end{equation}
where $k_{g_{\phi}}$, $\Delta_{g_{\phi}}$ and $\mathrm{Ric}_{g_{\phi}}$ denote the scalar curvature, the Laplace and the Ricci curvature on the domain $\Omega$ with respect to the metric $g_{\phi}$, respectively.
}\end{Theorem}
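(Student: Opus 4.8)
The plan is to combine the explicit block formulas of Lemma \ref{Le:4.1} with the homogeneity reduction of Lemma \ref{Le:4.2}. First I would use Lemma \ref{Le:4.2} to move an arbitrary point $(z_0,w_0)$ to a normalized one with $z_0=0$, $\phi(0)=0$ and $\frac{\partial\phi}{\partial z^t}(0)=0$; since $k_{g_F}$, $|\mathrm{Ric}_{g_F}|^2$ and $\triangle_{g_F}k_{g_F}$ are metric invariants it suffices to verify \eqref{e5.1}--\eqref{e5.3} there. The decisive simplification is that $\frac{\partial\phi}{\partial z^t}(0)=0$ makes the off-diagonal blocks $T_2,T_3$ of \eqref{e4.4}--\eqref{e4.6}, and hence $(T^{-1})_2,(T^{-1})_3$, vanish at the point, so that $T$ and $T^{-1}$ are block-diagonal there with diagonal blocks $T_1=(1+F')\frac{\partial^2\phi}{\partial z^t\partial \bar z}$ and $T_4=\frac{F'}{r^2}I_{d_0}+\frac{F''-F'}{r^4}\bar{w}^t w$.

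Next I would obtain the Ricci tensor from $\mathrm{Ric}_{g_F}=-\partial\bar\partial\ln\det T$. Taking $\ln$ in \eqref{e4.8} gives $\ln\det T=G(t)-d_0\ln r^2+\ln\det\!\big(\frac{\partial^2\phi}{\partial z^t\partial \bar z}\big)$. Differentiating term by term, the $G(t)$-summand contributes blocks of exactly the shape of Lemma \ref{Le:4.1} with $1+F'$ replaced by $G'$ and $F''$ by $G''$, the $-d_0\ln r^2$-summand contributes only to the $w\bar w$-block, and $\ln\det(\frac{\partial^2\phi}{\partial z^t\partial \bar z})$ contributes $-\mathrm{Ric}_{g_\phi}$ to the $z\bar z$-block. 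At the normalized point the summands carrying a factor $\frac{\partial\phi}{\partial z^t}$ or $\frac{\partial\phi}{\partial \bar z}$ drop out, leaving $\mathrm{Ric}_{g_F}$ block-diagonal with $z\bar z$-block $-G'\frac{\partial^2\phi}{\partial z^t\partial \bar z}+\mathrm{Ric}_{g_\phi}$ and $w\bar w$-block $-\frac{G'-d_0}{r^2}I_{d_0}-\frac{G''-G'+d_0}{r^4}\bar{w}^t w$.

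With $T^{-1}$ and $\mathrm{Ric}_{g_F}$ both block-diagonal, $k_{g_F}=\mathrm{tr}(T^{-1}\mathrm{Ric}_{g_F})$ and $|\mathrm{Ric}_{g_F}|^2=\mathrm{tr}\big((T^{-1}\mathrm{Ric}_{g_F})^2\big)$ split into a $z$-trace and a $w$-trace. For the $z$-trace I would use $\mathrm{tr}\big((\frac{\partial^2\phi}{\partial z^t\partial \bar z})^{-1}\mathrm{Ric}_{g_\phi}\big)=k_{g_\phi}$ and $\mathrm{tr}\big(((\frac{\partial^2\phi}{\partial z^t\partial \bar z})^{-1}\mathrm{Ric}_{g_\phi})^2\big)=|\mathrm{Ric}_{g_\phi}|^2$; for the $w$-trace I would diagonalize using that the rank-one matrix $\bar{w}^t w$ has eigenvalue $r^2$ once and $0$ with multiplicity $d_0-1$, so that $T_4^{-1}$ times the $w\bar w$-block of $\mathrm{Ric}_{g_F}$ has eigenvalues $-\frac{G''}{F''}$ (once) and $\frac{d_0-G'}{F'}$ ($d_0-1$ times). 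Summing first powers yields $\psi_1$ and \eqref{e5.1}; summing squares yields $\psi_2$ and \eqref{e5.2}.

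For \eqref{e5.3} I would read \eqref{e5.1} as $k_{g_F}=k_{g_\phi}(z)\,\frac{1}{1+F'(t)}+\psi_1(t)$, treating $k_{g_\phi}$ and $t$ as the independent arguments (so $\frac{\partial k_{g_F}}{\partial t}$ means differentiation at fixed $k_{g_\phi}$). Then $\triangle_{g_F}k_{g_F}=\mathrm{tr}(T^{-1}\,\mathrm{Hess}\,k_{g_F})$, and only the diagonal Hessian blocks survive at the normalized point because they are the only ones multiplied by nonzero blocks of $T^{-1}$. Computing $\frac{\partial^2 k_{g_F}}{\partial z_i\partial \bar{z}_j}$ and $\frac{\partial^2 k_{g_F}}{\partial w_a\partial \bar{w}_b}$ and dropping the terms proportional to $\frac{\partial\phi}{\partial z^t}(0)=0$, the $z$-trace produces $\frac{1}{(1+F')^2}\triangle_{g_\phi}k_{g_\phi}$ together with a first-order-in-$t$ term, while the $w$-trace (via the same eigenvalue bookkeeping) produces $\frac{1}{F''}\frac{\partial^2 k_{g_F}}{\partial t^2}$ and the remaining first-order terms, assembling into \eqref{e5.3}. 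I expect this Laplacian identity to be the main obstacle, since it is the only one requiring third derivatives of $\phi$ and second derivatives of $k_{g_\phi}$, and one must check carefully both that the off-diagonal Hessian entries are genuinely annihilated by $(T^{-1})_2=(T^{-1})_3=0$ and that the chain rule in $t$ is applied consistently with reading $k_{g_F}$ as a function of the pair $(k_{g_\phi},t)$; the identities \eqref{e5.1}--\eqref{e5.2} are comparatively routine once the block-diagonalization and the rank-one eigenvalue computation are set up.
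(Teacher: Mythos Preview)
Your proposal is correct and follows essentially the same route as the paper: reduce via Lemma~\ref{Le:4.2} to a point with $\phi(0)=0$, $\partial\phi(0)=0$, use Lemma~\ref{Le:4.1} to make $T$ and $T^{-1}$ block-diagonal there, read off $\mathrm{Ric}_{g_F}$ from $\ln\det T=G-d_0\ln r^2+\ln\det(\partial^2\phi/\partial z^t\partial\bar z)$, and take traces. Your eigenvalue bookkeeping for the $w$-block (eigenvalues $-G''/F''$ once and $(d_0-G')/F'$ with multiplicity $d_0-1$) is exactly what the paper's direct matrix multiplication encodes, and your handling of the Laplacian---noting that the off-diagonal Hessian blocks pair with $(T^{-1})_2=(T^{-1})_3=0$ in the trace---matches the paper's computation as well.
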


\begin{proof}[Proof]
Without loss of generality, let $0\in \Omega$, there exists local coordinates $(z_1, \cdots, z_d)$ on a neighborhood of a point $0$ such that the K\"{a}hler potential $\phi$ on the domain $\Omega$ is given locally by
\begin{equation}\label{e5.5}
  \phi(z)=\|z\|^2+\sum_{i,j,k,l=1}^dc_{i\bar{j}k\bar{l}}\; z_i\bar{z_j}z_k\bar{z_l}+O(\|z\|^5).
\end{equation}
By Lemma \ref{Le:4.2}, to compute $k_{g_F}(z,w)$, $|\mathrm{Ric}_{g_F}|^2(z,w)$ and
$\triangle_{g_F} k_{g_F}(z,w) $, we only need to calculate $k_{g_F}(0,w)$,
$|\mathrm{Ric}_{g_F}|^2(0,w)$ and $\triangle_{g_F} k_{g_F}(0,w) $.

Using Lemma \ref{Le:4.1}, we get
\begin{equation}\label{e5.7}
  T(0,w)=\left(
           \begin{array}{ll}
            (1+F') I_d  & 0 \\
             0 & \frac{F'}{r^2}I_{d_0}+\frac{F''-F'}{r^4}\bar{w}^tw \\
           \end{array}
         \right),
\end{equation}
\begin{equation}\label{e5.8}
  T^{-1}(0,w)=(T^{\bar{i}j})(0,w)=\left(
                                        \begin{array}{ll}
                                         \frac{1}{1+F'} I_d & 0 \\
                                          0 & \frac{r^2}{F'}I_{d_0}+\left(\frac{1}{F''}-\frac{1}{F'}\right)\overline{w}^tw \\
                                        \end{array}
                                      \right),
\end{equation}
and
\begin{equation}\label{e5.9}
\ln\det T=G-d_0\ln \|w\|^2+\ln\left(\det\left(\frac{\partial^2\phi}{\partial z^t\partial \bar z}\right)\right).
\end{equation}

Let
\begin{equation*}\label{e5.10}
 \mathrm{Ric}_{g_{\phi}}=-\frac{\partial^2}{\partial z^t\partial\bar{z}}\ln\left(\det\left(\frac{\partial^2\phi}{\partial z^t\partial \bar z}\right)\right).
\end{equation*}

From \eqref{e5.5} and \eqref{e5.9}, it follows that
\begin{eqnarray*}
\nonumber   & &  \mathrm{Ric}_{g_F}(0,w):=-\frac{\partial^2\ln\det T}{\partial Z^t\partial\overline{Z}}(0,w) \\
\label{e5.11}   &=&-\left(
                                                                                \begin{array}{ll}
                                                                                  G'I_d- \mathrm{Ric}_{g_{\phi}}(0) & 0  \\
                                                                                  0 & \frac{G'-d_0}{r^2}I_{d_0}+\frac{G''-G'+d_0}{r^4}\bar{w}^tw \\
                                                                                \end{array}
                                                                              \right),
\end{eqnarray*}
which implies that
\begin{eqnarray*}
   & & \left(T^{-1}\mathrm{Ric}_{g_F}\right)(0,w) \\
   &=& -\left(
                             \begin{array}{ll}
                               \frac{G'}{1+F'}I_d-\frac{1}{1+F'} \mathrm{Ric}_{g_{\phi}}(0) & 0 \\
                               0 & \frac{G'-d_0}{F'}I_{d_0}+\frac{d_0F''+F'G''-F''G'}{r^2F'F''}\bar{w}^tw \\
                             \end{array}
                           \right).
\end{eqnarray*}
So
\begin{equation*}\label{e5.12}
  k_{g_F}(0,w)=\mathrm{Tr}\left(T^{-1}\mathrm{Ric}_{g_F}\right)(0,w)=\frac{1}{1+F'}k_{g_{\phi}}+\psi_1(t)
\end{equation*}
and
\begin{eqnarray*}
\nonumber |\mathrm{Ric}_{g_F}|^2(0,w)  &=& \mathrm{Tr}\left(T^{-1}\mathrm{Ric}_{g_F}\;T^{-1}\mathrm{Ric}_{g_F}\right)(0,w) \\
\nonumber   &=&\frac{1}{(1+F')^2}|\mathrm{Ric}_{g_{\phi}}|^2(0)-2\frac{G'}{(1+F')^2} k_{g_{\phi}}(0)  \\
\label{e5.13}   & &+d\left(\frac{G'}{1+F'}\right)^2+(d_0-1)\left(\frac{G'-d_0}{F'}\right)^2
+\left(\frac{G''}{F''}\right)^2,
\end{eqnarray*}
where
\begin{equation*}
  k_{g_{\phi}}(0)=\mathrm{Tr}\left(\mathrm{Ric}_{g_{\phi}}\right)(0),\; |\mathrm{Ric}_{g_{\phi}}|^2(0)=\mathrm{Tr}\left(\mathrm{Ric}_{g_{\phi}}\mathrm{Ric}_{g_{\phi}}\right)(0).
\end{equation*}
Then, we obtain \eqref{e5.1} and \eqref{e5.2}.

Applying \eqref{e5.1} and \eqref{e5.5}, we obtain
\begin{eqnarray*}
   & &  \frac{\partial^2k_{g_F}}{\partial Z^t\partial\overline{Z}}(0,w) \\
   &=& \left(
                                                           \begin{array}{ll}
                                                             \frac{\partial k_{g_F}}{\partial t}I_d+\frac{1}{1+F'} \frac{\partial^2k_{g_{\phi}}}{\partial z^t\partial\bar{z}}(0)& -\frac{F''}{r^2(1+F')^2}\frac{\partial k_{g_{\phi}}}{\partial z^t}(0)w \\
                                                             -\frac{F''}{r^2(1+F')^2}\bar{w}^t\frac{\partial k_{g_{\phi}}}{\partial \bar{z}}(0) & \frac{1}{r^2}\frac{\partial k_{g_F}}{\partial t}I_{d_0}+\frac{1}{r^4}(\frac{\partial^2 k_{g_F}}{\partial t^2}-\frac{\partial k_{g_F}}{\partial t})\bar{w}^tw \\
                                                           \end{array}
                                                         \right),
\end{eqnarray*}
here $Z=(z,w)$. Therefore
\begin{eqnarray}
\nonumber   & &(\triangle_{g_F} k_{g_F})(0,w)\\
\nonumber   &=& \mathrm{Tr}\left(T^{-1}  \frac{\partial^2k_{g_F}}{\partial Z^t\partial\overline{Z}}\right)(0,w) \\
\nonumber   &=& \frac{d}{1+F'}\frac{\partial k_{g_F}}{\partial t}+\frac{1}{(1+F')^2}(\triangle_{g_{\phi}}k_{g_{\phi}})(0)+\frac{d_0-1}{F'}\frac{\partial k_{g_F}}{\partial t}+\frac{1}{F''}\frac{\partial^2 k_{g_F}}{\partial t^2},
\end{eqnarray}
thus we get \eqref{e5.3}.
\end{proof}

Let
\begin{equation*}
  x=F'(t),\;\varphi(x)=F''(t).
\end{equation*}
In Lemma \ref{Le:5.2} below, we obtain expressions in $\varphi(x)$ for the scalar
curvature $k_{g_F}$, the squared norm $|\mathrm{Ric}_{g_F}|^2$ of the Ricci curvature
tensor, and the Laplace $\triangle_{g_F} k_{g_F}$ of the scalar curvature.

\begin{Lemma}\label{Le:5.2}{Under assumptions of Theorem \ref{Th:5.1}, let
$$x=F'(t),\;\varphi(x)=F''(t)\;,$$
\begin{equation}\label{e5.60}
  \sigma=\frac{\left((1+x)^dx^{d_0-1}\varphi\right)'}{(1+x)^dx^{d_0-1}}
\end{equation}
and
\begin{equation}\label{e5.61}
 \chi=\frac{d_0(d_0-1)}{x}-\frac{\left((1+x)^dx^{d_0-1}\varphi\right)''}{(1+x)^dx^{d_0-1}}.
\end{equation}

Then
\begin{equation}\label{e5.54}
  k_{g_F}=\frac{1}{1+x}k_{g_{\phi}}+\frac{d_0(d_0-1)}{x}-\frac{\left((1+x)^dx^{d_0-1}\varphi\right)''}{(1+x)^dx^{d_0-1}},
\end{equation}
\begin{eqnarray}
\nonumber |\mathrm{Ric}_{g_F}|^2  &=& \frac{1}{(1+x)^2}|\mathrm{Ric}_{g_{\phi}}|^2-2\frac{\sigma}{(1+x)^2} k_{g_{\phi}}+(\sigma')^2 \\
\label{e5.55}   & & +d\left(\frac{\sigma}{1+x}\right)^2+(d_0-1)\left(\frac{\sigma-d_0}{x}\right)^2
\end{eqnarray}
and
\begin{eqnarray}
\nonumber  & & \triangle_{g_F} k_{g_F}\\
\label{e5.56}   &=& \frac{1}{(1+x)^2}(\triangle_{g_{\phi}}k_{g_{\phi}})- \frac{\left(\varphi(1+x)^{d-2}x^{d_0-1}\right)'}{(1+x)^dx^{d_0-1}}k_{g_{\phi}}+\frac{\left(\varphi\chi'(1+x)^dx^{d_0-1}\right)'}{(1+x)^dx^{d_0-1}},
\end{eqnarray}
where $k_{g_{\phi}}$, $\Delta_{g_{\phi}}$ and $\mathrm{Ric}_{g_{\phi}}$  denote the scalar curvature, the Laplace and the Ricci curvature  on the domain $\Omega$ with respect to the metric $g_{\phi}$, respectively.
}\end{Lemma}

\begin{proof}[Proof]
Using
\begin{equation*}
  x=F'(t),\;\varphi(x)=F''(t),
\end{equation*}
we give
\begin{equation*}
  F'''(t)=\varphi'(x)\frac{dx}{dt}=\varphi'(x)F''(t)=\varphi'(x)\varphi(x).
\end{equation*}
Then
\begin{eqnarray*}
 G' &=& (d_0-1)\frac{F''}{F'}+\frac{F'''}{F''}+d\frac{F''}{1+F'}\\
    &=&\frac{\left((1+x)^dx^{d_0-1}\varphi\right)'}{(1+x)^dx^{d_0-1}}.
\end{eqnarray*}
Let
\begin{equation*}
  \sigma=\frac{\left((1+x)^dx^{d_0-1}\varphi\right)'}{(1+x)^dx^{d_0-1}},
\end{equation*}
we have
\begin{equation}\label{ee5.61}
  G'(t)=\sigma(x)
\end{equation}
and
\begin{equation}\label{ee5.62}
   G''(t)=\sigma'(x)\frac{dx}{dt}=\sigma'(x)\varphi(x).
\end{equation}

By \eqref{ee5.61} and \eqref{ee5.62}, we obtain
\begin{eqnarray*}
\psi_1   &=&-d\frac{G'}{1+F'}+(d_0-1)\frac{d_0-G'}{F'}-\frac{G''}{F''} \\
   &=&  \frac{d_0(d_0-1)}{x}-\frac{\left((1+x)^dx^{d_0-1}\sigma\right)'}{(1+x)^dx^{d_0-1}},
\end{eqnarray*}
which implies \eqref{e5.54}.

From \eqref{ee5.61} and \eqref{ee5.62}, we also give
\begin{eqnarray*}
\psi_2   &=& d\left(\frac{G'}{1+F'}\right)^2+(d_0-1)\left(\frac{G'-d_0}{F'}\right)^2+\left(\frac{G''}{F''}\right)^2 \\
   &=& d\frac{\sigma^2}{(1+x)^2}+(d_0-1)\left(\frac{\sigma-d_0}{x}\right)^2+(\sigma')^2,
\end{eqnarray*}
thus
\begin{eqnarray*}
  & &|\mathrm{Ric}_{g_F}|^2\\
  &=&\frac{1}{(1+F')^2}|\mathrm{Ric}_{g_{\phi}}|^2-\frac{2G'}{(1+F')^2} k_{g_{\phi}}+\psi_2  \\
  &=& \frac{1}{(1+x)^2}|\mathrm{Ric}_{g_{\phi}}|^2-2\frac{\sigma}{(1+x)^2} k_{g_{\phi}}+(\sigma')^2+d\frac{\sigma^2}{(1+x)^2}+(d_0-1)\left(\frac{\sigma-d_0}{x}\right)^2.
\end{eqnarray*}

Since
\begin{equation*}
  \frac{\partial k_{g_F}}{\partial t}=\frac{\partial}{\partial x}\left(\frac{1}{1+x}k_{g_{\phi}}+\chi\right)\frac{dx}{dt}= -\frac{\varphi}{(1+x)^2}k_{g_{\phi}}+\varphi\chi'
\end{equation*}
and
\begin{equation*}
 \frac{\partial^2 k_{g_F}}{\partial t^2}=\left(-\left(\frac{\varphi}{(1+x)^2}\right)'k_{g_{\phi}}+(\varphi\chi')'\right)\varphi,
\end{equation*}
then
\begin{eqnarray*}
  & & \triangle_{g_F} k_{g_F}\\
  &=& \frac{1}{(1+F')^2}(\triangle_{g_{\phi}}k_{g_{\phi}}) +\frac{1}{F''}\frac{\partial^2k_{g_F}}{\partial t^2}+\left(\frac{d}{1+F'}+\frac{d_0-1}{F'}\right) \frac{\partial k_{g_F}}{\partial t}\\
   &=&\frac{1}{(1+x)^2}(\triangle_{g_{\phi}}k_{g_{\phi}})-\left(\frac{dx+(d_0-1)(1+x)}{x(1+x)^3}\varphi+\left(\frac{\varphi}{(1+x)^2}\right)'\right)k_{g_{\phi}}    \\
   & &+(\varphi\chi')'+\frac{dx+(d_0-1)(1+x)}{x(1+x)}\varphi\chi'.
\end{eqnarray*}
\end{proof}

In order to obtain the coefficient $\mathbf{a}_2$ of  the Bergman function expansion  for $(M,g_F)$, we give a key Lemma \ref{Le:5.3} which  gives an explicit expression of the squared norm $|R_{g_F}|^2$ of the curvature tensor of the metric $g_F$.

\begin{Lemma}\label{Le:5.3}
Under the situation of Theorem \ref{Th:5.1}, let
$$t=\phi(z)+\ln\|w\|^2,\;x=F'(t),\;\varphi(x)=F''(t)\;.$$
Then
\begin{eqnarray}
\nonumber    & &|R_{g_F}|^2 \\
\nonumber     &=& \frac{1}{(1+x)^2}|R_{g_{\phi}}|^2-\frac{4\varphi}{(1+x)^3}k_{g_{\phi}}+2d(d+1)\frac{\varphi^2}{(1+x)^4}+4d\left(\left(\frac{\varphi}{1+x}\right)'\right)^2 \\
\label{e8.1}   & & +\left(\varphi''\right)^2+(d_0-1)\left\{4d \left(\frac{\varphi}{x(1+x)}\right)^2+4\left(\left(\frac{\varphi}{x}\right)'\right)^2+2d_0\left(\frac{\varphi-x}{x^2}\right)^2\right\},
\end{eqnarray}
where $k_{g_{\phi}}$ and $R_{g_{\phi}}$ denote the scalar curvature and the curvature tensor on the domain $\Omega$ with respect to the metric $g_{\phi}$, respectively.
\end{Lemma}

\begin{proof}[Proof]

Let $Z=(z,w)$, $T=\frac{\partial^2\Phi_F}{\partial Z^t\partial\overline{Z}}$,
\begin{equation}\label{e8.2}
  R_{g_F}\equiv(R_{i\bar{j}}):=-\partial\bar{\partial}T+(\partial T)T^{-1}\wedge(\bar{\partial}T)
\end{equation}
and
\begin{equation}\label{e8.3}
 R_{i\bar{j}}=\sum_{k,l=1}^nR_{i\bar{j}k\bar{l}}dZ_k\wedge d\overline{Z_l}.
\end{equation}

Since the metric $g_F$ is invariant under  transformations
\begin{equation*}
  (z,w)\in M\longmapsto (z,wU)\in M, \;U\in \mathcal{U}(d_0),
\end{equation*}
 where $\mathcal{U}(d_0)$ indicates the unitary group of order $d_0$, we only need to compute the
squared norm of the curvature tensor of the metric $g_F$ at $(z,w)=(0,w_1,0,\cdots,0)$.

Let $\phi$ be given locally by
\begin{equation*}
  \phi(z)=\|z\|^2+\sum_{i,j,k,l=1}^dc_{i\bar{j}k\bar{l}}\; z_i\bar{z_j}z_k\bar{z_l}+O(\|z\|^5).
\end{equation*}
Then
\begin{equation*}
  \partial\phi(0)=0,\;\bar{\partial}\phi(0)=0,\;\partial\bar{\partial}\phi(0)=\sum_{k=1}^ddz_k\wedge d\bar{z_k},
  \; (\partial\frac{\partial\phi}{\partial z^t})(0)=0
\end{equation*}
and
\begin{equation*}
  (\partial\frac{\partial\phi}{\partial \bar{z}})(0)=dz,\; (\bar{\partial}\frac{\partial\phi}{\partial z^t})(0)=(d\bar{z})^t,\;
  (\bar{\partial}\frac{\partial\phi}{\partial \bar{z}})(0)=0,\;
  \partial\bar{\partial}(\frac{\partial\phi}{\partial z^t})(0)=0,\;\partial\bar{\partial}(\frac{\partial\phi}{\partial \bar{z}})(0)=0.
\end{equation*}

The following we set $z=0,\mathbf{e_1}=(1,0,\ldots,0)\in \mathbb{C}^{d_0},w=w_1\mathbf{e_1}$, $du=(dw_2,dw_3,\ldots,dw_{d_0}),dw=(dw_1,du)$, we get
\begin{equation*}
  \partial t=\frac{1}{r^2}\overline{w_1}dw_1,\;\bar{\partial} t=\frac{1}{r^2}w_1d\overline{w_1},\;\partial\bar{\partial}t=dz\wedge (d\bar{z})^t+\frac{1}{r^2}du\wedge (d\bar{u})^t,
\end{equation*}
where $r^2=|w_1|^2$.

By using Lemma \ref{Le:4.1} and
\begin{equation*}
  x=F'(t),\;\varphi(x)=F''(t),\;F'''(t)=\varphi(x)\varphi'(x),\;F^{(4)}(t)=\varphi^2(x)\varphi''(x)+\varphi(x)(\varphi'(x))^2,
\end{equation*}
we have
\begin{eqnarray}
\nonumber  \partial T &=& \begin{array}{c}
     \scriptstyle  d \\
     \scriptstyle  d_0
     \end{array}
     \overset{\begin{array}{cc}
           \scriptstyle  d &\;\;\scriptstyle d_0
           \end{array}}
     {\left(\begin{array}{c|c}
             (\partial T)_1 & (\partial T)_2 \\
             \hline
             (\partial T)_3 & (\partial T)_4
           \end{array}
     \right)} \\
\nonumber   &=& \overline{w_1}\left(
                    \begin{array}{ll}
                      \frac{F''}{r^2}dw_1I_d & 0 \\
                     \frac{F''}{r^2}\mathbf{e_1}^tdz &  \widetilde{(\partial T)_4}  \\
                    \end{array}
                  \right) \\
\label{e8.4}   &=&\overline{w_1}\left(
                    \begin{array}{ll}
                     \frac{\varphi}{r^2}dw_1I_d & 0 \\
                     \frac{\varphi}{r^2}\mathbf{e_1}^tdz & \widetilde{(\partial T)_4}
                    \end{array}
                  \right),
\end{eqnarray}

\begin{eqnarray}
\nonumber  \widetilde{(\partial T)_4}  &=& \begin{array}{c}
     \scriptstyle  1 \\
     \scriptstyle  d_0-1
     \end{array}
     \overset{\begin{array}{cc}
           \scriptstyle  1 &\;\;\;\scriptstyle d_0-1
           \end{array}}
     {\left(\begin{array}{c|c}
             (\partial T)_{41} & (\partial T)_{42} \\
             \hline
             (\partial T)_{43} & (\partial T)_{44}
           \end{array}
     \right)} \\
\nonumber   &=& \frac{F''-F'}{r^4}\left(
                                                                                         \begin{array}{ll}
                                                                                           \frac{F'''-F''}{F''-F'}dw_1 & du \\
                                                                                           0 & dw_1I_{d_0-1} \\
                                                                                         \end{array}
                                                                                       \right) \\
\label{e8.4-1}   &=& \frac{\varphi-x}{r^4}\left(
                                                                                         \begin{array}{ll}
                                                                                           \frac{\varphi'\varphi-\varphi}{\varphi-x}dw_1 & du \\
                                                                                           0 & dw_1I_{d_0-1} \\
                                                                                         \end{array}
                                                                                       \right),
\end{eqnarray}

\begin{eqnarray}
\nonumber \overline{\partial} T  &=& \begin{array}{c}
     \scriptstyle  d \\
     \scriptstyle  d_0
     \end{array}
     \overset{\begin{array}{cc}
           \scriptstyle  d &\;\;\scriptstyle d_0
           \end{array}}
     {\left(\begin{array}{c|c}
             (\overline{\partial} T)_1 & (\overline{\partial} T)_2 \\
             \hline
             (\overline{\partial} T)_3 & (\overline{\partial} T)_4
           \end{array}
     \right)}  \\
\nonumber   &=& w_1\left(
                    \begin{array}{ll}
                      \frac{F''}{r^2}d\overline{w_1}I_d & \frac{F''}{r^2}(d\bar{z})^t\mathbf{e_1} \\
                      0 & \widetilde{(\overline{\partial} T)_4}  \\
                    \end{array}
                  \right) \\
\label{e8.5}   &=& w_1\left(
                    \begin{array}{ll}
                     \frac{\varphi}{r^2}d\overline{w_1}I_d &\frac{\varphi}{r^2}(d\bar{z})^t\mathbf{e_1} \\
                     0 & \widetilde{ (\overline{\partial} T)_4 }                \\
                    \end{array}
                  \right),
\end{eqnarray}

\begin{eqnarray}
\nonumber  \widetilde{(\overline{\partial} T)_4}  &=& \begin{array}{c}
     \scriptstyle  1 \\
     \scriptstyle  d_0-1
     \end{array}
     \overset{\begin{array}{cc}
           \scriptstyle  1 &\;\;\;\scriptstyle d_0-1
           \end{array}}
     {\left(\begin{array}{c|c}
             (\overline{\partial} T)_{41} & (\overline{\partial} T)_{42} \\
             \hline
             (\overline{\partial} T)_{43} & (\overline{\partial} T)_{44}
           \end{array}
     \right)} \\
\nonumber   &=& \frac{F''-F'}{r^4}\left(
                                                                                         \begin{array}{ll}
                                                                                           \frac{F'''-F''}{F''-F'}d\overline{w_1} & 0 \\
                                                                                           (d\overline{u})^t & d\overline{w_1}I_{d_0-1} \\
                                                                                         \end{array}
                                                                                       \right) \\
\label{e8.5-1}   &=& \frac{\varphi-x}{r^4}\left(
                                                                                         \begin{array}{ll}
                                                                                           \frac{\varphi'\varphi-\varphi}{\varphi-x}d\overline{w_1} & 0 \\
                                                                                           (d\overline{u})^t & d\overline{w_1}I_{d_0-1} \\
                                                                                         \end{array}
                                                                                       \right)
\end{eqnarray}
and
\begin{equation}\label{e8.6}
  \partial\bar{\partial}T=\begin{array}{c}
     \scriptstyle  d \\
     \scriptstyle  d_0
     \end{array}
     \overset{\begin{array}{cc}
           \scriptstyle  d &\;\;\scriptstyle d_0
           \end{array}}
     {\left(\begin{array}{c|c}
             (\partial\overline{\partial} T)_1 & (\partial\overline{\partial} T)_2 \\
             \hline
             (\partial\overline{\partial} T)_3 & (\partial\overline{\partial} T)_4
           \end{array}
     \right)} ,
\end{equation}
where
\begin{eqnarray}
\nonumber   (\partial\bar{\partial}T)_1  &=& \left\{F'' dz\wedge (d\bar{z})^t
+\frac{F'''}{r^2}dw_1\wedge d\overline{w_1}+\frac{F''}{r^2}du\wedge(d\bar{u})^t\right\}I_d \\
\nonumber    & & +\left(1+F'\right)\partial\bar{\partial}(\frac{\partial^2\phi}{\partial z^t\partial\bar{z}})(0)-F''(d\bar{z})^t\wedge dz\\
\nonumber &=& \left\{\varphi dz\wedge (d\bar{z})^t
+\frac{\varphi\varphi'}{r^2}dw_1\wedge d\overline{w_1}+\frac{\varphi}{r^2}du\wedge (d\overline{u})^t\right\}I_d\\
\label{e8.7}    & &+\left(1+x\right)\partial\bar{\partial}(\frac{\partial^2\phi}{\partial z^t\partial\bar{z}})(0)-\varphi (d\bar{z})^t\wedge dz,
\end{eqnarray}
\begin{eqnarray}
\nonumber   (\partial\bar{\partial}T)_2 &=& -\left\{\frac{F''-F'''}{r^2}dw_1\wedge(d\bar{z})^t\mathbf{e_1}+\frac{F''}{r^2}(d\bar{z})^t\wedge dw\right\} \\
\label{e8.8}   &=& -\left\{\frac{\varphi(1-\varphi')}{r^2}dw_1\wedge(d\bar{z})^t\mathbf{e_1}+\frac{\varphi}{r^2}(d\bar{z})^t\wedge dw\right\},
\end{eqnarray}

\begin{eqnarray}
\nonumber   (\partial\bar{\partial}T)_3 &=& -\left\{\frac{F''-F'''}{r^2}\mathbf{e_1}^tdz\wedge d\overline{w_1}+\frac{F''}{r^2}(d\bar{w})^t\wedge dz\right\} \\
\label{e8.9}   &=& -\left\{\frac{\varphi(1-\varphi')}{r^2}\mathbf{e_1}^tdz\wedge d\overline{w_1}+\frac{\varphi}{r^2}(d\bar{w})^t\wedge dz\right\}
\end{eqnarray}
and
\begin{equation}\label{e8.10}
 (\partial\bar{\partial}T)_4=\begin{array}{c}
     \scriptstyle  1 \\
     \scriptstyle  d_0-1
     \end{array}
     \overset{\begin{array}{cc}
           \scriptstyle  1 &\;\;\;\scriptstyle d_0-1
           \end{array}}
     {\left(\begin{array}{c|c}
             (\partial\overline{\partial} T)_{41} & (\partial\overline{\partial} T)_{42} \\
             \hline
             (\partial\overline{\partial} T)_{43} & (\partial\overline{\partial} T)_{44}
           \end{array}
     \right)}
\end{equation}
with
\begin{eqnarray*}
& &(\partial\bar{\partial}T)_{41}\\
    &=&\frac{F'''}{r^2}dz\wedge (d\bar{z})^t+\frac{F^{(4)}-2F'''+F''}{r^4}dw_1\wedge d\overline{w_1}+\frac{F'''-2F''+F'}{r^4}du\wedge (d\overline{u})^t\\
   &=& \frac{\varphi\varphi'}{r^2}dz\wedge (d\bar{z})^t+\frac{\varphi^2\varphi''+\varphi(\varphi'-1)^2}{r^4}dw_1\wedge d\overline{w_1} +\frac{\varphi\varphi'-2\varphi+x}{r^4}du\wedge (d\overline{u})^t,
\end{eqnarray*}
\begin{equation*}
  (\partial\bar{\partial}T)_{42}=\frac{F'''-2F''+F'}{r^4}du\wedge d\overline{w_1}=\frac{\varphi\varphi'-2\varphi+x}{r^4}du\wedge d\overline{w_1},
\end{equation*}
\begin{equation*}
  (\partial\bar{\partial}T)_{43}=\frac{F'''-2F''+F'}{r^4}dw_1\wedge (d\overline{u})^t=\frac{\varphi\varphi'-2\varphi+x}{r^4}dw_1\wedge (d\overline{u})^t
\end{equation*}
and
\begin{eqnarray*}
   & &(\partial\bar{\partial}T)_{44}  \\
   &=& \left\{\frac{F''}{r^2}dz\wedge (d\bar{z})^t+\frac{F'''-2F''+F'}{r^4}dw_1\wedge d\overline{w_1}+\frac{F''-F'}{r^4}du\wedge (d\overline{u})^t\right\}I_{d_0-1} \\
   & &-\frac{F''-F'}{r^4}(d\overline{u})^t\wedge du\\
   &=&\left\{\frac{\varphi}{r^2}dz\wedge (d\bar{z})^t+\frac{\varphi\varphi'-2\varphi+x}{r^4}dw_1\wedge d\overline{w_1}+\frac{\varphi-x}{r^4}du\wedge (d\overline{u})^t\right\}I_{d_0-1}\\
   & &-\frac{\varphi-x}{r^4}(d\overline{u})^t\wedge du.
\end{eqnarray*}

According to \eqref{e5.8}, \eqref{e8.4}, \eqref{e8.4-1}, \eqref{e8.5} and \eqref{e8.5-1}, we get
\begin{equation}\label{e8.11}
  ((\partial T)T^{-1}\wedge(\bar{\partial}T))=\begin{array}{c}
     \scriptstyle  d \\
     \scriptstyle  d_0
     \end{array}
     \overset{\begin{array}{cc}
           \scriptstyle  d &\;\;\scriptstyle d_0
           \end{array}}
     {\left(\begin{array}{c|c}
           ((\partial T)T^{-1}\wedge(\bar{\partial}T))_1    & ((\partial T)T^{-1}\wedge(\bar{\partial}T))_2  \\
             \hline
           ((\partial T)T^{-1}\wedge(\bar{\partial}T))_3    &  ((\partial T)T^{-1}\wedge(\bar{\partial}T))_4
           \end{array}
     \right)} ,
\end{equation}
here
\begin{equation*}
  ((\partial T)T^{-1}\wedge(\bar{\partial}T))_1= \frac{\varphi^2}{1+x}\frac{1}{r^2}dw_1\wedge d\overline{w_1}I_d,
\end{equation*}
\begin{equation*}
  ((\partial T)T^{-1}\wedge(\bar{\partial}T))_2= \frac{\varphi^2}{1+x}\frac{1}{r^2}dw_1\wedge(d\bar{z})^t\mathbf{e_1},
\end{equation*}
\begin{equation*}
  ((\partial T)T^{-1}\wedge(\bar{\partial}T))_3= \frac{\varphi^2}{1+x}\frac{1}{r^2} \mathbf{e_1}^tdz\wedge d\overline{w_1}
\end{equation*}
and
\begin{eqnarray*}
   & &  ((\partial T)T^{-1}\wedge(\bar{\partial}T))_4 \\
  &=&\left(
       \begin{array}{ll}
        \frac{\varphi^2}{1+x}\frac{1}{r^2}dz\wedge (d\bar{z})^t+\frac{\varphi(1-\varphi')^2}{r^4}dw_1\wedge d\overline{w_1}+\frac{(\varphi-x)^2}{xr^4}du\wedge (d\overline{u})^t  & \frac{(\varphi-x)^2}{xr^4}du\wedge d\overline{w_1}\\
       \frac{(\varphi-x)^2}{xr^4}dw_1\wedge (d\overline{u})^t   & \frac{(\varphi-x)^2}{xr^4}dw_1\wedge d\overline{w_1}I_{d_0-1} \\
       \end{array}
     \right).
\end{eqnarray*}

Now \eqref{e8.6} and \eqref{e8.11} give
\begin{equation}\label{e8.12}
   R_{g_F}=\begin{array}{c}
     \scriptstyle  d \\
     \scriptstyle  d_0
     \end{array}
     \overset{\begin{array}{cc}
           \scriptstyle  d &\;\;\scriptstyle d_0
           \end{array}}
     {\left(\begin{array}{c|c}
           (R_{g_F})_1    & (R_{g_F})_2  \\
             \hline
           (R_{g_F})_3    &  (R_{g_F})_4
           \end{array}
     \right)},
\end{equation}
where
\begin{eqnarray}
\nonumber     & & (R_{g_F})_{1}\\
\nonumber     &=&\left\{-\varphi dz\wedge (d\bar{z})^t-\frac{(1+x)\varphi}{r^2}\left(\frac{\varphi}{1+x}\right)'dw_1\wedge d\overline{w_1}-\frac{\varphi}{r^2}du\wedge (d\overline{u})^t\right\}I_d\\
\label{e8.13} & &-(1+x)\partial\bar{\partial}(\frac{\partial^2\phi}{\partial z^t\partial\bar{z}})(0)+\varphi (d\bar{z})^t\wedge dz,
\end{eqnarray}
\begin{eqnarray}
\nonumber   & & (R_{g_F})_{2}\\
\nonumber   &=& -\frac{(1+x)\varphi}{r^2}\left(\left(\frac{\varphi}{1+x}\right)'-\frac{1}{1+x}\right) dw_1\wedge(d\bar{z})^t\mathbf{e}_1+\frac{\varphi}{r^2}(d\bar{z})^t\wedge dw\\
\label{e8.14}   &=&-\left(
                     \begin{array}{cc}
                      \frac{(1+x)\varphi}{r^2}\left(\frac{\varphi}{1+x}\right)' dw_1\wedge d\overline{z_1} & \frac{\varphi}{r^2}du\wedge d\overline{z_1} \\
                       \vdots & \vdots \\
                      \frac{(1+x)\varphi}{r^2}\left(\frac{\varphi}{1+x}\right)'  dw_1\wedge d\overline{z_d} &\frac{\varphi}{r^2} du\wedge d\overline{z_d} \\
                     \end{array}
                   \right),
\end{eqnarray}
\begin{eqnarray}
 \nonumber  & & (R_{g_F})_{3}\\
 \nonumber  &=& -\frac{(1+x)\varphi}{r^2}\left(\left(\frac{\varphi}{1+x}\right)'-\frac{1}{1+x}\right) \mathbf{e}_1^tdz\wedge d\overline{w_1}+\frac{\varphi}{r^2}(d\bar{w})^t\wedge dz\\
\label{e8.15}   &=&-\left(
                     \begin{array}{ccc}
                        \frac{(1+x)\varphi}{r^2}\left(\frac{\varphi}{1+x}\right)' dz_1\wedge d\overline{w_1} & \cdots & \frac{(1+x)\varphi}{r^2}\left(\frac{\varphi}{1+x}\right)' dz_d\wedge d\overline{w_1} \\
                       \frac{\varphi}{r^2} dz_1\wedge (d\overline{u})^t  & \cdots & \frac{\varphi}{r^2} dz_d\wedge (d\overline{u})^t \\
                     \end{array}
                   \right)
\end{eqnarray}
and
\begin{equation}\label{e8.16}
  (R_{g_F})_{4} =\begin{array}{c}
     \scriptstyle  1 \\
     \scriptstyle  d_0-1
     \end{array}
     \overset{\begin{array}{cr}
           \scriptstyle  1 &\;\;\;\scriptstyle d_0-1
           \end{array}}
     {\left(\begin{array}{c|c}
          (R_{g_F})_{41}    & (R_{g_F})_{42} \\
             \hline
          (R_{g_F})_{43}    &  (R_{g_F})_{44}
           \end{array}
     \right)},
\end{equation}
here
\begin{eqnarray*}
(R_{g_F})_{41}   &=&-\frac{(1+x)\varphi}{r^2}\left(\frac{\varphi}{1+x}\right)'dz\wedge (d\bar{z})^t-\frac{\varphi^2\varphi''}{r^4}dw_1\wedge d\overline{w_1}\\
   & &-\frac{x\varphi}{r^4}\left(\frac{\varphi}{x}\right)'du\wedge (d\overline{u})^t,
\end{eqnarray*}
\begin{equation*}
  (R_{g_F})_{42}=-\frac{x\varphi}{r^4}\left(\frac{\varphi}{x}\right)'du\wedge d\overline{w_1},
\end{equation*}
\begin{equation*}
  (R_{g_F})_{43}=-\frac{x\varphi}{r^4}\left(\frac{\varphi}{x}\right)'dw_1\wedge (d\overline{u})^t
\end{equation*}
and
\begin{eqnarray*}
 (R_{g_F})_{44}  &=& \left\{-\frac{\varphi}{r^2}dz\wedge (d\bar{z})^t-\frac{x\varphi}{r^4}\left(\frac{\varphi}{x}\right)'dw_1\wedge d\overline{w_1}-\frac{\varphi-x}{r^4}du\wedge (d\overline{u})^t\right\}I_{d_0-1}\\
   & & +\frac{\varphi-x}{r^4}(d\overline{u})^t\wedge du.
\end{eqnarray*}

Set
\begin{equation*}
  \phi_{i\bar{j}k\bar{l}}:=\frac{\partial^4\phi}{\partial z_i\partial\bar{z_j}\partial z_k\partial\bar{z_l}},
\end{equation*}
then
\begin{equation}\label{e8.17}
  R_{g_{\phi}}(0)=-\partial\bar{\partial}(\frac{\partial^2\phi}{\partial z^t\partial\bar{z}})(0)=-\left(\sum_{k,l=1}^d\phi_{i\bar{j}k\bar{l}}(0)dz_k\wedge d\bar{z_l}\right)_{1\leq i,j\leq d}.
\end{equation}

Let
\begin{equation*}
  \mathbf{R}_{i\bar{j}}=(R_{i\bar{j}k\bar{l}})_{1\leq k,l\leq d+d_0},\; A_{i\bar{j}}=\mathrm{Tr}\left(T^{-1}\mathbf{R}_{i\bar{j}}T^{-1} \overline{\mathbf{R}_{i\bar{j}}^t}\right),
\end{equation*}

Using
\begin{equation*}
  R_{i\overline{j}k\overline{l}}=R_{k\overline{j}i\overline{l}},\;R_{i\overline{j}k\overline{l}}=R_{i\overline{l}k\overline{j}},\;
   R_{i\overline{j}k\overline{l}}= R_{k\overline{l}i\overline{j}},\; R_{i\overline{j}k\overline{l}}=\overline{ R_{j\overline{i}l\overline{k}}},
\end{equation*}
we get
\begin{equation*}
  A_{i\bar{j}}=A_{j\bar{i}}.
\end{equation*}

By
\begin{equation*}
  T^{-1}=\left(
                \begin{array}{ccc}
                  \frac{1}{1+x}I_d & 0 & 0 \\
                  0 & \frac{r^2}{\varphi} & 0 \\
                  0 & 0 & \frac{r^2}{x}I_{d_0-1} \\
                \end{array}
              \right)
\end{equation*}
at $(0,w)$, we have
\begin{eqnarray*}
\nonumber   |R_{g_F}|^2&=&\sum_{t_1,t_2,t_3,t_4=1}^{d+d_0}\sum_{i,j,k,l=1}^{d+d_0}T^{\overline{j}t_1}T^{\overline{t_2}i}T^{\overline{l}t_3}T^{\overline{t_4}k}R_{t_1\overline{t_2}t_3\overline{t_4}}R_{i\overline{j}k\overline{l}}\\
\nonumber   &=& \sum_{i,j=1}^{d+d_0}(T^{\overline{i}i}T^{\overline{j}j})\sum_{k,l=1}^{d+d_0}(T^{\overline{k}k}T^{\overline{l}l}) |R_{i\overline{j}k\overline{l}}|^2 \\
\nonumber   &=& \sum_{i,j=1}^{d+d_0}(T^{\overline{i}i}T^{\overline{j}j}) A_{i\bar{j}},
\end{eqnarray*}
thus
\begin{eqnarray}
\nonumber |R_{g_F}|^2&=&\sum_{i=1}^d\sum_{j=1}^d\frac{1}{(1+x)^2}A_{i\bar{j}}+2\sum_{i=1}^d\frac{r^2}{(1+x)\varphi}A_{i\overline{d+1}}+2\sum_{i=1}^d\sum_{j=d+2}^{d+d_0}\frac{r^2}{x(1+x)}
A_{i\bar{j}}\\
\label{e8.26}   & &+\frac{r^4}{\varphi^2}A_{d+1\overline{d+1}}+2\sum_{j=d+2}^{d+d_0}\frac{r^4}{x\varphi}A_{d+1\overline{j}}
+\sum_{i=d+2}^{d+d_0}\sum_{j=d+2}^{d+d_0}\frac{r^4}{x^2}A_{i\bar{j}}.
\end{eqnarray}

Let $E_{i\overline{j}}$ be the $d\times d$ matrix whose $(i,j)$ entry is $1$, other entries are $0$; $\mathbf{E}_{i\overline{j}}$ be the $(d+d_0)\times (d+d_0)$ matrix whose $(i,j)$ entry is $1$, other entries are $0$. Set $\mathbf{R}_{i\bar{j}}^{\phi}=(\phi_{i\bar{j}k\bar{l}}(0))_{1\leq,k,l\leq d}$.

From \eqref{e8.12}, \eqref{e8.13}, \eqref{e8.14}, \eqref{e8.15} and \eqref{e8.16}, it follows that

$(i)$ For $1\leq i=j\leq d$,
\begin{equation*}
  \mathbf{R}_{i\bar{i}}=\left(
                          \begin{array}{ccc}
                            -\varphi I_d-(1+x)\mathbf{R}_{i\bar{i}}^{\phi}-\varphi E_{i\overline{i}} & 0 & 0 \\
                            0 & -\frac{(1+x)\varphi}{r^2}\left(\frac{\varphi}{1+x}\right)' & 0 \\
                            0 & 0 & -\frac{\varphi}{r^2}I_{d_0-1} \\
                          \end{array}
                        \right),
\end{equation*}
 so
\begin{eqnarray*}
A_{i\bar{i}}  &=& \frac{1}{(1+x)^2}\mathrm{Tr}\left\{\left(\varphi I_d+(1+x)\mathbf{R}_{i\bar{i}}^{\phi}+\varphi E_{i\overline{i}}\right)
  \left(\varphi I_d+(1+x)\overline{(\mathbf{R}_{i\bar{i}}^{\phi})^t}+\varphi E_{i\overline{i}}\right)\right\}  \\
   && +\frac{r^4}{\varphi^2}\left(\frac{(1+x)\varphi}{r^2}\left(\frac{\varphi}{1+x}\right)'\right)^2
   +(d_0-1) \frac{r^4}{x^2}\frac{\varphi^2}{r^4}\\
  &=&\frac{(d+3)\varphi^2}{(1+x)^2}+\frac{2\varphi}{1+x}\sum_{k=1}^d\phi_{i\bar{i}k\bar{k}}(0)+\sum_{k,l=1}^d|\phi_{i\bar{i}k\bar{l}}(0)|^2
  +\frac{2\varphi}{1+x}\phi_{i\bar{i}i\bar{i}}(0)\\
  &&+  \left((1+x)\left(\frac{\varphi}{1+x}\right)'\right)^2+(d_0-1) \frac{\varphi^2}{x^2}.
\end{eqnarray*}

$(ii)$ For $1\leq i\neq j\leq d$,
\begin{equation*}
  \mathbf{R}_{i\bar{j}}=\left(
                          \begin{array}{lcc}
                           -(1+x)\mathbf{R}_{i\bar{j}}^{\phi}-\varphi E_{j\overline{i}} & 0 & 0 \\
                            0 & 0 & 0 \\
                            0 & 0 & 0 \\
                          \end{array}
                        \right),
\end{equation*}
then
\begin{eqnarray*}
 A_{i\bar{j}}  &=& \frac{1}{(1+x)^2}\mathrm{Tr}\left\{\left((1+x)\mathbf{R}_{i\bar{j}}^{\phi}+\varphi E_{j\overline{i}}\right)
  \left((1+x)\overline{(\mathbf{R}_{i\bar{j}}^{\phi})^t}+\varphi E_{i\overline{j}}\right)\right\}  \\
  &=&\frac{\varphi^2}{(1+x)^2}+\frac{2\varphi}{1+x}\phi_{i\bar{j}j\bar{i}}(0)+\sum_{k,l=1}^d|\phi_{i\bar{j}k\bar{l}}(0)|^2.
\end{eqnarray*}

$(iii)$ For $1\leq i\leq d$, $j=d+1$
\begin{equation*}
   \mathbf{R}_{i\overline{d+1}}=-\frac{(1+x)\varphi}{r^2}\left(\frac{\varphi}{1+x}\right)' \mathbf{E}_{d+1\overline{i}},
\end{equation*}
thus
\begin{equation*}
  A_{i\overline{d+1}}=\frac{r^2}{(1+x)\varphi}\left(\frac{(1+x)\varphi}{r^2}\left(\frac{\varphi}{1+x}\right)'\right)^2=\frac{(1+x)\varphi}{r^2}\left(\left(\frac{\varphi}{1+x}\right)'\right)^2.
\end{equation*}

$(iv)$ For $1\leq i\leq d$, $d+2\leq j\leq d+d_0$,
\begin{equation*}
\mathbf{R}_{i\overline{j}}=-\frac{\varphi}{r^2} \mathbf{E}_{j\overline{i}},
\end{equation*}
we get
\begin{equation*}
  A_{i\overline{j}}=\frac{r^2}{x(1+x)}\left(\frac{\varphi}{r^2}\right)^2=\frac{\varphi^2}{x(1+x)}\frac{1}{r^2}.
\end{equation*}

$(v)$ For $i=j=d+1$, since
\begin{equation*}
\mathbf{R}_{d+1\overline{d+1}}=\left(
                                 \begin{array}{ccc}
                                  - \frac{(1+x)\varphi}{r^2}\left(\frac{\varphi}{1+x}\right)'I_d & 0 & 0 \\
                                   0 & -\frac{\varphi^2\varphi''}{r^4} & 0 \\
                                   0 & 0 & -\frac{x\varphi}{r^4}\left(\frac{\varphi}{x}\right)'I_{d_0-1} \\
                                 \end{array}
                               \right),
\end{equation*}
therefore
\begin{eqnarray*}
  & & A_{d+1\overline{d+1}}\\
  &=& d\frac{1}{(1+x)^2}\left(\frac{(1+x)\varphi}{r^2}\left(\frac{\varphi}{1+x}\right)'\right)^2
  +\frac{r^4}{\varphi^2}\left(\frac{\varphi^2\varphi''}{r^4}\right)^2 +(d_0-1)\frac{r^4}{x^2}\left(\frac{x\varphi}{r^4}\left(\frac{\varphi}{x}\right)'\right)^2\\
   &=&\frac{\varphi^2}{r^4}\left\{d\left(\left(\frac{\varphi}{1+x}\right)'\right)^2+\left(\varphi''\right)^2
   +(d_0-1)\left(\left(\frac{\varphi}{x}\right)'\right)^2\right\}.
\end{eqnarray*}

$(vi)$ For $i=d+1$, $d+2\leq j\leq d+d_0$, using
\begin{equation*}
\mathbf{R}_{d+1\overline{j}}=  -\frac{x\varphi}{r^4}\left(\frac{\varphi}{x}\right)'\mathbf{E}_{j\overline{d+1}},
\end{equation*}
we have
\begin{equation*}
  A_{d+1\overline{j}}=\frac{r^4}{x\varphi}\left(\frac{x\varphi}{r^4}\left(\frac{\varphi}{x}\right)'\right)^2
  =\frac{x\varphi}{r^4}\left(\left(\frac{\varphi}{x}\right)'\right)^2.
\end{equation*}

$(vii)$ For $d+2\leq i=j\leq d+d_0$, by
\begin{equation*}
  \mathbf{R}_{i\overline{i}}=\left(
                               \begin{array}{ccc}
                                 -\frac{\varphi}{r^2}I_d & 0 & 0 \\
                                 0 & -\frac{x\varphi}{r^4}\left(\frac{\varphi}{x}\right)' & 0 \\
                                 0 & 0 & -\frac{\varphi-x}{r^4}I_{d_0-1} \\
                               \end{array}
                             \right)-\frac{\varphi-x}{r^4}\mathbf{E}_{i\overline{i}},
\end{equation*}
we obtain
\begin{eqnarray*}
A_{i\overline{i}}   &=& d\frac{1}{(1+x)^2}\frac{\varphi^2}{r^4}+\frac{r^4}{\varphi^2}\left(\frac{x\varphi}{r^4}\left(\frac{\varphi}{x}\right)'\right)^2
+(d_0+2)\frac{r^4}{x^2}\left(\frac{\varphi-x}{r^4}\right)^2 \\
   &=& \frac{1}{r^4} \left\{d\frac{\varphi^2}{(1+x)^2}+\left(x\left(\frac{\varphi}{x}\right)'\right)^2
+(d_0+2)\left(\frac{\varphi-x}{x}\right)^2\right\}.
\end{eqnarray*}

$(viii)$ For $d+2\leq i\neq j\leq d+d_0$, from
\begin{equation*}
 \mathbf{R}_{i\overline{j}}=-\frac{\varphi-x}{r^4}\mathbf{E}_{j\overline{i}},
\end{equation*}
then
\begin{equation*}
 A_{i\overline{j}}=\frac{r^4}{x^2}\frac{(\varphi-x)^2}{r^8}=\frac{1}{r^4}\left(\frac{\varphi-x}{x}\right)^2.
\end{equation*}

Combining the above $(i)-(viii)$, from \eqref{e8.26}, we have
\begin{eqnarray*}
|R_{g_F}|^2 &=&\frac{4\varphi}{(1+x)^3}\sum_{i=1}^d\sum_{k=1}^d\phi_{i\bar{i}k\bar{k}}(0)+\frac{1}{(1+x)^2}\sum_{i,j,k,l=1}^d|\phi_{i\bar{j}k\bar{l}}(0)|^2
 +2d(d+1)\frac{\varphi^2}{(1+x)^4}\\
&& +4d\left(\left(\frac{\varphi}{1+x}\right)'\right)^2+\left(\varphi''\right)^2+4d(d_0-1) \left(\frac{\varphi}{x(1+x)}\right)^2\\
&& +4(d_0-1)\left(\left(\frac{\varphi}{x}\right)'\right)^2+2d_0(d_0-1)\left(\frac{\varphi-x}{x^2}\right)^2.
\end{eqnarray*}

Since
\begin{equation*}
  k_{g_{\phi}}(0)=-\sum_{i,k=1}^d\phi_{i\overline{i}k\overline{k}}(0),\; | R_{g_{\phi}}|^2(0)=\sum_{i,j,k,l=1}^d|\phi_{i\bar{j}k\bar{l}}(0)|^2,
\end{equation*}
we get
\begin{eqnarray}
\nonumber   & &  |R_{g_F}|^2\\
\nonumber   &=&\frac{1}{(1+x)^2}|R_{g_{\phi}}|^2(0)-\frac{4\varphi}{(1+x)^3}k_{g_{\phi}}(0)+2d(d+1)\frac{\varphi^2}{(1+x)^4}+4d\left(\left(\frac{\varphi}{1+x}\right)'\right)^2\\
\nonumber   & &+\left(\varphi''\right)^2+(d_0-1)\left\{4d \left(\frac{\varphi}{x(1+x)}\right)^2+4\left(\left(\frac{\varphi}{x}\right)'\right)^2+2d_0\left(\frac{\varphi-x}{x^2}\right)^2\right\},
\end{eqnarray}
which completes the proof of \eqref{e8.1}.
\end{proof}

Applying Lemma \ref{Le:5.2} and Lemma \ref{Le:5.3}, we obtain explicit expressions of the coefficients $\mathbf{a}_j\;(j=1,2)$ of the Bergman function expansion for $ (M,g_{F})$.

\begin{Theorem}\label{Th:5.2}{Assume that $\phi$
is a globally defined  K\"{a}hler potential on a domain $\Omega\subset \mathbb{C}^d$.
Let $g_{\phi}$ be a K\"{a}hler  metric  on the domain $\Omega$ associated with the
K\"{a}hler form $\omega_{\phi}=\frac{\sqrt{-1}}{2\pi}\partial\overline{\partial}\phi$, and
$g_F$ be a K\"{a}hler  metric  on the domain $ M$ associated with the K\"{a}hler form
$\omega_F=\frac{\sqrt{-1}}{2\pi}\partial\overline{\partial}\Phi_F$, where
$$\Phi_F(z,w)=\phi(z)+F(\phi(z)+\ln\|w\|^2)$$
is a K\"{a}hler potential on a Hartogs domain
$$ M=\left\{(z,w)\in \Omega\times\mathbb{C}^{d_0}: \|w\|^2<e^{-\phi(z)}\right\}$$

Set
$$t=\phi(z)+\ln\|w\|^2,\;\;x=F'(t),\;\varphi(x)=F''(t)\;,$$
\begin{equation*}
  \sigma(x)=\frac{\left((1+x)^dx^{d_0-1}\varphi(x)\right)'}{(1+x)^dx^{d_0-1}}
\end{equation*}
and
\begin{equation*}
   \chi(x)=\frac{d_0(d_0-1)}{x}-\frac{\left((1+x)^dx^{d_0-1}\varphi(x)\right)''}{(1+x)^dx^{d_0-1}}.
\end{equation*}

Let $k_{g_{\phi}}$, $\Delta_{g_{\phi}}$, $\mathrm{Ric}_{g_{\phi}}$ and $R_{g_{\phi}}$ be the scalar curvature, the Laplace, the Ricci curvature and the curvature tensor on the domain $\Omega$ with respect to the metric $g_{\phi}$, respectively. Put
\begin{equation*}
 a_1=\frac{1}{2}k_{g_{\phi}},\;a_2=\frac{1}{3}\triangle_{g_{\phi}} k_{g_{\phi}}+\frac{1}{24}|R_{g_{\phi}}|^2-\frac{1}{6}|\mathrm{Ric}_{g_{\phi}}|^2+\frac{1}{8}k_{g_{\phi}}^2
\end{equation*}
and
\begin{equation*}
  \mathbf{a}_1=\frac{1}{2}k_{g_F},\;\mathbf{a}_2=\frac{1}{3}\triangle_{g_F} k_{g_F}+\frac{1}{24}|R_{g_F}|^2-\frac{1}{6}|\mathrm{Ric}_{g_F}|^2+\frac{1}{8}k_{g_F}^2.
\end{equation*}

 Then
\begin{equation}\label{e5.48}
   \mathbf{a}_1=\frac{1}{1+x}a_1+\frac{d_0(d_0-1)}{2x}-\frac{\left((1+x)^dx^{d_0-1}\varphi\right)''}{2(1+x)^dx^{d_0-1}}
\end{equation}
and
\begin{eqnarray}
\nonumber  \mathbf{a}_2  &=& \frac{1}{(1+x)^2}a_2+\left\{\frac{1}{2(1+x)}\chi+\frac{\varphi}{(1+x)^3}\right\}a_1 \\
\nonumber   & & +\frac{1}{24}\left\{8(\varphi\chi')'+3\chi^2-4(\sigma')^2-\frac{4d}{(1+x)^2}\sigma^2+(\varphi'')^2\right.\\
\nonumber  & &\left.
+4d\left(\left(\frac{\varphi}{1+x}\right)'\right)^2+8\frac{(d+d_0-1)x+(d_0-1)}{x(1+x)}\varphi\chi'+\frac{2d(d+1)}{(1+x)^4}\varphi^2\right\}\\
\label{e5.49}   & &+\frac{d_0-1}{6}\left\{\frac{d\varphi^2}{x^2(1+x)^2}+\left(\left(\frac{\varphi}{x}\right)'\right)^2+\frac{d_0}{2}\frac{(\varphi-x)^2}{x^4}
-\frac{(\sigma-d_0)^2}{x^2}\right\}.
\end{eqnarray}
}\end{Theorem}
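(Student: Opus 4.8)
The proof is a substitution-and-bookkeeping argument. I would insert the four closed-form expressions for $k_{g_F}$, $|\mathrm{Ric}_{g_F}|^2$ and $\triangle_{g_F}k_{g_F}$ from Lemma~\ref{Le:5.2} (equations \eqref{e5.54}, \eqref{e5.55}, \eqref{e5.56}) together with $|R_{g_F}|^2$ from Lemma~\ref{Le:5.3} (equation \eqref{e8.1}) into the defining formulas $\mathbf{a}_1=\tfrac12 k_{g_F}$ and $\mathbf{a}_2=\tfrac13\triangle_{g_F}k_{g_F}+\tfrac1{24}|R_{g_F}|^2-\tfrac16|\mathrm{Ric}_{g_F}|^2+\tfrac18 k_{g_F}^2$, and then collect the resulting terms according to which base-domain invariant they carry. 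Formula \eqref{e5.48} for $\mathbf{a}_1$ is immediate from \eqref{e5.54} and $a_1=\tfrac12 k_{g_\phi}$, so all the work lies in establishing \eqref{e5.49}.

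For $\mathbf{a}_2$ I would sort every term into three groups. First, each of the four $g_F$-quantities contributes its base-domain analogue multiplied by exactly $(1+x)^{-2}$: the leading terms of \eqref{e5.56}, \eqref{e8.1}, \eqref{e5.55} and of $k_{g_F}^2=(1+x)^{-2}k_{g_\phi}^2+\cdots$ combine as $(1+x)^{-2}\bigl(\tfrac13\triangle_{g_\phi}k_{g_\phi}+\tfrac1{24}|R_{g_\phi}|^2-\tfrac16|\mathrm{Ric}_{g_\phi}|^2+\tfrac18 k_{g_\phi}^2\bigr)=(1+x)^{-2}a_2$, producing the first term of \eqref{e5.49}. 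Second, I would gather all terms linear in $k_{g_\phi}=2a_1$: these are $-\tfrac13\frac{(\varphi(1+x)^{d-2}x^{d_0-1})'}{(1+x)^dx^{d_0-1}}$ from the Laplace, $-\tfrac{\varphi}{6(1+x)^3}$ from $|R_{g_F}|^2$, $\tfrac{\sigma}{3(1+x)^2}$ from $|\mathrm{Ric}_{g_F}|^2$, and $\tfrac{\chi}{4(1+x)}$ from the cross term of $k_{g_F}^2$. Using the two identities $\sigma=\varphi'+\tfrac{d\varphi}{1+x}+\tfrac{(d_0-1)\varphi}{x}$ and $\frac{(\varphi(1+x)^{d-2}x^{d_0-1})'}{(1+x)^dx^{d_0-1}}=\tfrac{\varphi'}{(1+x)^2}+\tfrac{(d-2)\varphi}{(1+x)^3}+\tfrac{(d_0-1)\varphi}{x(1+x)^2}$, the $\varphi'$ and $\tfrac{(d_0-1)\varphi}{x(1+x)^2}$ contributions cancel and the $\tfrac{\varphi}{(1+x)^3}$ contributions collapse to $\tfrac{\varphi}{2(1+x)^3}$, so that the coefficient of $k_{g_\phi}$ is $\tfrac{\chi}{4(1+x)}+\tfrac{\varphi}{2(1+x)^3}$; doubling gives the coefficient $\tfrac{1}{2(1+x)}\chi+\tfrac{\varphi}{(1+x)^3}$ of $a_1$ in \eqref{e5.49}.

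The third group consists of everything with no base-domain curvature. Here the only nontrivial step is the Laplace contribution $\tfrac13\frac{(\varphi\chi'(1+x)^dx^{d_0-1})'}{(1+x)^dx^{d_0-1}}$, which I would expand by the product rule as $\tfrac13(\varphi\chi')'+\tfrac13\bigl(\tfrac{d}{1+x}+\tfrac{d_0-1}{x}\bigr)\varphi\chi'$ and rewrite using $\tfrac{d}{1+x}+\tfrac{d_0-1}{x}=\frac{(d+d_0-1)x+(d_0-1)}{x(1+x)}$; this reproduces the $8(\varphi\chi')'$ and $8\frac{(d+d_0-1)x+(d_0-1)}{x(1+x)}\varphi\chi'$ terms inside the $\tfrac1{24}\{\cdots\}$ brace. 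The remaining pieces are then simply redistributed: the $3\chi^2$ term comes from $\tfrac18 k_{g_F}^2$, the $-4(\sigma')^2$ and $-\tfrac{4d}{(1+x)^2}\sigma^2$ terms from $-\tfrac16|\mathrm{Ric}_{g_F}|^2$, while $(\varphi'')^2$, $4d((\varphi/(1+x))')^2$ and $\tfrac{2d(d+1)}{(1+x)^4}\varphi^2$ come from $\tfrac1{24}|R_{g_F}|^2$.

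The only genuine bookkeeping hazard is the handling of the $(d_0-1)$-proportional terms, which split between the two braces. The three positive pieces $\tfrac{d\varphi^2}{x^2(1+x)^2}$, $((\varphi/x)')^2$ and $\tfrac{d_0}{2}\tfrac{(\varphi-x)^2}{x^4}$ in the $\tfrac{d_0-1}{6}\{\cdots\}$ brace arise from $\tfrac1{24}|R_{g_F}|^2$, whereas the subtracted $\tfrac{(\sigma-d_0)^2}{x^2}$ in that same brace arises from $-\tfrac16|\mathrm{Ric}_{g_F}|^2$. After factoring out $d_0-1$, I would check that the numerical prefactors line up, namely $\tfrac1{24}\cdot4d=\tfrac16\,d$, $\tfrac1{24}\cdot4=\tfrac16$ and $\tfrac1{24}\cdot2d_0=\tfrac16\cdot\tfrac{d_0}{2}$. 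Once these cancellations are confirmed term by term, \eqref{e5.49} follows.
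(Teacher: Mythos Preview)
Your proposal is correct and follows exactly the approach the paper takes: the paper itself gives no detailed proof of Theorem~\ref{Th:5.2} beyond the sentence ``Applying Lemma~\ref{Le:5.2} and Lemma~\ref{Le:5.3}, we obtain explicit expressions of the coefficients $\mathbf{a}_j\;(j=1,2)$\ldots'', so your substitution-and-bookkeeping argument is in fact more detailed than what the paper provides. Your verification of the $a_1$-coefficient (the cancellation of the $\varphi'/(1+x)^2$ and $(d_0-1)\varphi/(x(1+x)^2)$ terms, leaving $\tfrac{\varphi}{2(1+x)^3}+\tfrac{\chi}{4(1+x)}$ in front of $k_{g_\phi}$) and your tracking of which pieces land in the $\tfrac1{24}\{\cdots\}$ brace versus the $\tfrac{d_0-1}{6}\{\cdots\}$ brace are all accurate.
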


Theorem \ref{Th:5.2} implies the following results.

\begin{Theorem}\label{Th:3.2}{Assume that
\begin{equation*}
  \varphi(x)=x(1+x),
\end{equation*}
in Theorem \ref{Th:5.2}, then

$(\mathrm{I})$
\begin{equation}\label{e3.48}
   \mathbf{a}_1=\left(a_1+\frac{d(d+1)}{2}\right)\frac{1}{1+x}-\frac{n(n+1)}{2}
\end{equation}
and
\begin{eqnarray}
\nonumber  \mathbf{a}_2 &=& \left\{a_2+\frac{(d-1)(d+2)}{2}\left(a_1+\frac{d(d+1)}{2}\right)-\frac{(d-1)d(d+1)(3d+2)}{24}\right\} \\
\nonumber   & &\times\frac{1}{(1+x)^2}-\frac{(n-1)(n+2)}{2}\left(a_1+\frac{d(d+1)}{2}\right)\frac{1}{1+x}\\
\label{e3.49}   & &  +\frac{(n-1)n(n+1)(3n+2)}{24},
\end{eqnarray}
where $n=d+d_0$.

 $(\mathrm{II})$ $\mathbf{a}_1$ is a constant $\Longleftrightarrow$
$a_1=-\frac{d(d+1)}{2}$  $\Longleftrightarrow$ $\mathbf{a}_1=-\frac{n(n+1)}{2}$.

$(\mathrm{III})$ $\mathbf{a}_2$ is a constant $\Longleftrightarrow$
$\mathbf{a}_2=\frac{(n-1)n(n+1)(3n+2)}{24}$ $\Longleftrightarrow$
$a_1=-\frac{d(d+1)}{2}$ and $a_2=\frac{(d-1)d(d+1)(3d+2)}{24}$. }\end{Theorem}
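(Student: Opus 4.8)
The plan is to specialize the general formulas \eqref{e5.48} and \eqref{e5.49} of Theorem \ref{Th:5.2} to the profile $\varphi(x)=x(1+x)$ and then carry out a partial-fraction reduction in the variable $1+x$. First I would record the elementary data attached to this profile. Since $\varphi=x+x^2$, one has $\varphi'=1+2x$, $\varphi''=2$, together with the clean identities $\tfrac{\varphi}{1+x}=x$ and $\tfrac{\varphi}{x}=1+x$, so that $\left(\tfrac{\varphi}{1+x}\right)'=\left(\tfrac{\varphi}{x}\right)'=1$ and $\varphi-x=x^2$. The decisive observation is that $(1+x)^dx^{d_0-1}\varphi=(1+x)^{d+1}x^{d_0}$, which collapses $\sigma$ and $\chi$ into linear and rational expressions: differentiating gives
\begin{equation*}
 \sigma=(n+1)x+d_0,\qquad \chi=-n(n+1)+\frac{d(d+1)}{1+x},\qquad n=d+d_0,
\end{equation*}
hence $\sigma'=n+1$, $\chi'=-\tfrac{d(d+1)}{(1+x)^2}$, and $\varphi\chi'=-\tfrac{d(d+1)x}{1+x}$. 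The early appearance of the factor $n(n+1)$ is what forces the final answer to factor. Substituting $\chi$ into \eqref{e5.48} and using $\tfrac{\varphi}{(1+x)^3}=\tfrac{x}{(1+x)^2}$ immediately yields the formula \eqref{e3.48} for $\mathbf a_1$, establishing part (I) for $\mathbf a_1$.

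For the $\mathbf a_2$ formula in part (I) I would treat the right-hand side of \eqref{e5.49} term by term. The coefficient of $a_1$ is $\tfrac{1}{2(1+x)}\chi+\tfrac{\varphi}{(1+x)^3}$; inserting the values above and rewriting $\tfrac{x}{(1+x)^2}=\tfrac{1}{1+x}-\tfrac{1}{(1+x)^2}$ reduces it to
\begin{equation*}
 \frac{(d-1)(d+2)}{2(1+x)^2}-\frac{(n-1)(n+2)}{2(1+x)},
\end{equation*}
which matches \eqref{e3.49} through the identities $\tfrac{d(d+1)}{2}-1=\tfrac{(d-1)(d+2)}{2}$ and $1-\tfrac{n(n+1)}{2}=-\tfrac{(n-1)(n+2)}{2}$. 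The curvature-free bracket is the laborious part. Under $\varphi=x(1+x)$ every summand becomes either a pure constant (e.g.\ $\varphi''=2$, $(\sigma')^2=(n+1)^2$, $\left(\tfrac{\varphi}{x}\right)'^2=1$, $\tfrac{(\varphi-x)^2}{x^4}=1$, $\tfrac{(\sigma-d_0)^2}{x^2}=(n+1)^2$, $\tfrac{d\varphi^2}{x^2(1+x)^2}=d$) or a rational function whose expansion already involves only the powers $(1+x)^0,(1+x)^{-1},(1+x)^{-2}$; in particular $\tfrac{2d(d+1)}{(1+x)^4}\varphi^2=\tfrac{2d(d+1)x^2}{(1+x)^2}$, $\chi^2$, $\tfrac{\sigma^2}{(1+x)^2}$ and $8\tfrac{(n-1)x+(d_0-1)}{x(1+x)}\varphi\chi'$ contribute no lower powers, so no cancellation of higher-order poles is needed. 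I expect the collecting-and-factoring of the three surviving coefficients to be the main obstacle; I would organize it by grouping the $a_1$-free terms according to their denominator and then recognizing the polynomials $(d-1)d(d+1)(3d+2)$ and $(n-1)n(n+1)(3n+2)$, the latter being exactly the value of $a_2$ for the $n$-dimensional model, which serves as a useful guide for the factorization.

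Parts (II) and (III) are then formal consequences of the structure of \eqref{e3.48}--\eqref{e3.49}. Because $\varphi=F''>0$ forces $x=F'(t)$ to be strictly increasing in $t=\phi(z)+\ln\|w\|^2$, and $t$ sweeps a nondegenerate interval along each fibre $\{z\}\times\{w:\|w\|^2<e^{-\phi(z)}\}$, the three functions $1,\ \tfrac{1}{1+x},\ \tfrac{1}{(1+x)^2}$ are linearly independent there; moreover their coefficients in \eqref{e3.48}--\eqref{e3.49} depend on the point only through $a_1=a_1(z)$ and $a_2=a_2(z)$. Hence $\mathbf a_1$ is constant on $M$ if and only if the $\tfrac{1}{1+x}$-coefficient $a_1+\tfrac{d(d+1)}{2}$ vanishes identically, i.e.\ $a_1=-\tfrac{d(d+1)}{2}$, and in that case \eqref{e3.48} gives $\mathbf a_1=-\tfrac{n(n+1)}{2}$, which proves (II).

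Finally, for (III), constancy of $\mathbf a_2$ forces both the $\tfrac{1}{1+x}$- and the $\tfrac{1}{(1+x)^2}$-coefficients in \eqref{e3.49} to vanish. Since $n=d+d_0\ge 2$ gives $(n-1)(n+2)\neq 0$, vanishing of the former yields $a_1=-\tfrac{d(d+1)}{2}$; substituting this into the latter leaves $a_2=\tfrac{(d-1)d(d+1)(3d+2)}{24}$, after which only the constant term $\tfrac{(n-1)n(n+1)(3n+2)}{24}$ survives, so $\mathbf a_2$ equals this value. The converse is immediate: feeding $a_1=-\tfrac{d(d+1)}{2}$ and $a_2=\tfrac{(d-1)d(d+1)(3d+2)}{24}$ back into \eqref{e3.49} makes both non-constant coefficients vanish, so $\mathbf a_2$ is the stated constant. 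This completes (III).
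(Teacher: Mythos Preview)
Your proposal is correct and follows exactly the approach the paper intends: the paper states only that ``Theorem~\ref{Th:5.2} implies the following results'' and gives no further details, so your plan of substituting $\varphi(x)=x(1+x)$ into \eqref{e5.48}--\eqref{e5.49}, computing $\sigma=(n+1)x+d_0$ and $\chi=-n(n+1)+\tfrac{d(d+1)}{1+x}$, and then reading off the partial-fraction coefficients is precisely the omitted computation. Your argument for (II) and (III) via the linear independence of $1,\ \tfrac{1}{1+x},\ \tfrac{1}{(1+x)^2}$ on the range of $x$ is the natural and correct way to extract the equivalences.
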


\begin{Theorem}\label{Th:3.3}{Assume that $d=1$ and
\begin{equation*}
 \varphi(x)=Ax^2+x
\end{equation*}
in Theorem \ref{Th:5.2}, then

$(\mathrm{I})$
\begin{equation*}
   \mathbf{a}_1=\frac{a_1 - d_0 + A(d_0+1)}{1+x}-\frac{n(n+1)}{2}A.
\end{equation*}
If $a_1=d_0-(d_0+1)A$, then
\begin{equation*}
 \mathbf{a}_2=\frac{a_2}{(1+x)^2} +\frac{(n-1)n(n+1)(3n+2)}{24}A^2.
\end{equation*}
Here $n=1+d_0$.

 $(\mathrm{II})$ $\mathbf{a}_1$ is a constant $\Longleftrightarrow$ $a_1=d_0-(d_0+1)A$  $\Longleftrightarrow$ $\mathbf{a}_1=-\frac{n(n+1)}{2}A$.

$(\mathrm{III})$ Both $\mathbf{a}_1$ and $\mathbf{a}_2$ are constants  $\Longleftrightarrow$
$a_1=d_0-(d_0+1)A$ and $a_2=0$ $\Longleftrightarrow$ $ \mathbf{a}_1=-\frac{n(n+1)}{2}A$ and
$\mathbf{a}_2=\frac{(n-1)n(n+1)(3n+2)}{24}A^2$. }\end{Theorem}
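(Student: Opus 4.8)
The plan is to substitute $d=1$ and $\varphi(x)=Ax^{2}+x$ directly into the general expressions \eqref{e5.48} and \eqref{e5.49} of Theorem \ref{Th:5.2}, and then to read off (I)--(III) as elementary statements about rational functions of $x$. First I record $\varphi'(x)=2Ax+1$ and $\varphi''(x)=2A$. Writing $Q:=(1+x)x^{d_0-1}$ and using $\sigma=\varphi'+\varphi\bigl(\tfrac1{1+x}+\tfrac{d_0-1}{x}\bigr)$ together with $\tfrac{\left((1+x)x^{d_0-1}\varphi\right)''}{Q}=\sigma'+\sigma\bigl(\tfrac1{1+x}+\tfrac{d_0-1}{x}\bigr)$, the defining formulas of $\sigma$ and $\chi$ in Theorem \ref{Th:5.2} collapse, after simplification, to the compact forms
\begin{equation*}
\sigma=A(n+1)x+(n-A)+\frac{A-1}{1+x},\qquad \chi=-\frac{2\mu_0}{1+x}-n(n+1)A,
\end{equation*}
where $n=1+d_0$ and $\mu_0:=d_0-(d_0+1)A$; in particular $\chi'=\tfrac{2\mu_0}{(1+x)^2}$, $\sigma'=A(n+1)-\tfrac{A-1}{(1+x)^2}$ and $(\varphi\chi')'=\tfrac{2\mu_0\left[(2A-1)x+1\right]}{(1+x)^3}$.

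Since \eqref{e5.48} reads $\mathbf{a}_1=\tfrac{a_1}{1+x}+\tfrac{\chi}{2}$ in the notation of Theorem \ref{Th:5.2}, the formula for $\chi$ above gives at once $\mathbf{a}_1=\tfrac{a_1-d_0+A(d_0+1)}{1+x}-\tfrac{n(n+1)}{2}A$, which is the first identity of (I). For (II) the essential point is a separation-of-variables observation: with $t=\phi(z)+\ln\|w\|^2$, the variable $x=F'(t)$ is strictly monotone in $t$ because $F''=\varphi>0$ by the K\"ahler positivity conditions of Remark \ref{Re:4.1} (see \eqref{ap2.3}--\eqref{ap2.4}), so for each fixed $z$ the quantity $x$ sweeps a nondegenerate interval as $w$ varies. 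Hence any function of the shape $\tfrac{c(z)}{1+x}+\mathrm{const}$ is constant on $M$ if and only if $c(z)\equiv 0$. Applying this to the formula for $\mathbf{a}_1$ shows $\mathbf{a}_1$ is constant $\iff a_1\equiv d_0-(d_0+1)A$, and then $\mathbf{a}_1=-\tfrac{n(n+1)}{2}A$; this is (II).

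For the second identity of (I), I would substitute $d=1$ and the data above into \eqref{e5.49} and organize the outcome as
\begin{equation*}
\mathbf{a}_2=\frac{a_2}{(1+x)^2}+C_1(x)\,a_1+C_0(x),\qquad C_1=\frac{\chi}{2(1+x)}+\frac{\varphi}{(1+x)^3},
\end{equation*}
with $C_0$ collecting the $\tfrac1{24}$- and $\tfrac{d_0-1}{6}$-brackets of \eqref{e5.49}. A short check shows that with $\varphi=x(Ax+1)$ neither bracket carries any pole in $\tfrac1x$ (for instance $\tfrac{\varphi}{x}=Ax+1$, $\tfrac{\varphi-x}{x^2}=A$, and $\tfrac{\varphi\chi'}{x}$ is regular at $x=0$), and that no positive power of $1+x$ survives, so $C_0$ and $C_1$ are polynomials in $\tfrac1{1+x}$ of degree at most four. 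Imposing the value $a_1=\mu_0$ supplied by (II), the claim to be verified is that every pole of $C_1\mu_0+C_0$ cancels and that the surviving constant equals $\tfrac{(n-1)n(n+1)(3n+2)}{24}A^2$, so that $\mathbf{a}_2=\tfrac{a_2}{(1+x)^2}+\tfrac{(n-1)n(n+1)(3n+2)}{24}A^2$. This cancellation of the coefficients of $\tfrac1{1+x},\dots,\tfrac1{(1+x)^4}$ is the one genuinely laborious step and the main obstacle; it is a finite piece of bookkeeping with rational functions, structurally the general-$A$ analogue of the $d=1$ specialization of Theorem \ref{Th:3.2} (to which it reduces when $A=1$, $\mu_0=-1$).

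Granting this, (III) follows by combining the conditional formula for $\mathbf{a}_2$ with the separation argument of (II). If $a_1\equiv d_0-(d_0+1)A$ and $a_2\equiv0$, then $\mathbf{a}_1$ is constant by (II), and $\mathbf{a}_2=\tfrac{(n-1)n(n+1)(3n+2)}{24}A^2$ is constant by (I). Conversely, if both $\mathbf{a}_1$ and $\mathbf{a}_2$ are constant, then $a_1\equiv d_0-(d_0+1)A$ by (II), whence (I) gives $\mathbf{a}_2=\tfrac{a_2}{(1+x)^2}+\mathrm{const}$; since $\tfrac1{(1+x)^2}$ is nonconstant as $x$ sweeps its interval, constancy of $\mathbf{a}_2$ forces $a_2\equiv0$. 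The two displayed numerical reformulations then follow by direct substitution of these values.
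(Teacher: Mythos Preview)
Your proposal is correct and follows exactly the approach the paper intends: the paper offers no explicit proof of this theorem beyond the sentence ``Theorem~\ref{Th:5.2} implies the following results,'' so direct substitution of $d=1$ and $\varphi(x)=Ax^{2}+x$ into \eqref{e5.48}--\eqref{e5.49} is precisely what is meant. Your closed forms for $\sigma$ and $\chi$ are accurate, the separation-of-variables argument for (II) and (III) is the natural one, and you are right that the only nontrivial step is the rational-function cancellation in $\mathbf{a}_2$, which is a routine computation.
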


\setcounter{equation}{0}
\section{The K\"{a}hler metrics with constant  coefficients $\mathbf{a}_j\;(j=1,2)$  }

In this section, we first give an explicit expression of the function $\varphi(x)$ for the coefficients $\mathbf{a}_j\;(j=1,2)$ to be constants, then we get an explicit expression of $F(t)$ by solving the differential equation. Finally we complete the proof of Theorem \ref{apth:3.3}.

\begin{Theorem}\label{Th:5.2.1}{
Assume that $\phi$ is a globally defined  K\"{a}hler potential on a domain $\Omega\subset \mathbb{C}^d$. Let $g_{\phi}$ be a K\"{a}hler  metric  on the domain $\Omega$ associated with the K\"{a}hler form $\omega=\frac{\sqrt{-1}}{2\pi}\partial\overline{\partial}\phi$.

Let $g_{F}$ be a K\"{a}hler  metric on the Hartogs domain
\begin{equation*}
  M=\left\{(z,w)\in \Omega\times\mathbb{C}^{d_0}: \|w\|^2<e^{-\phi(z)}\right\}
\end{equation*}
 associated with the K\"{a}hler form
 $$\omega_F=\frac{\sqrt{-1}}{2\pi}\partial\overline{\partial}\Phi_{F},$$
where $\|\cdot\|$ is the standard Hermite norm in $\mathbb{C}^{d_0}$ and
$$\Phi_{F}(z,w)=\phi(z)+F(t),\; t=\phi(z)+\ln\|w\|^2,$$
where $F(t)$ satisfies the following conditions:

$(\mathrm{i})$
\begin{equation}\label{ap1.1}
 \lim_{t\rightarrow -\infty}F(t)=0.
\end{equation}

$(\mathrm{ii})$ For the case of $d_0>1$,
\begin{equation}\label{ap1.2}
0< \lim_{t\rightarrow -\infty}\frac{F'(t)}{e^t}<+\infty, \;0< \lim_{t\rightarrow -\infty}\frac{F''(t)}{e^t}<+\infty,\;F'(t)>0,\;F''(t)>0,\;t\in (-\infty,0).
\end{equation}

$(\mathrm{iii})$ For the case of $d_0=1$,
\begin{equation}\label{ap1.3}
0<\lim_{t\rightarrow -\infty}(1+F'(t))<+\infty, \;0< \lim_{t\rightarrow -\infty}\frac{F''(t)}{e^t}<+\infty,\;1+F'(t)>0,\;F''(t)>0,\;t\in (-\infty,0).
\end{equation}

Let $k_{g_{\phi}}$, $\Delta_{g_{\phi}}$, $\mathrm{Ric}_{g_{\phi}}$ and $R_{g_{\phi}}$ be the scalar curvature, the Laplace, the Ricci curvature and the curvature tensor on the domain $\Omega$ with respect to the metric $g_{\phi}$, respectively. Put
\begin{equation*}
 a_1=\frac{1}{2}k_{g_{\phi}},\;a_2=\frac{1}{3}\triangle_{g_{\phi}} k_{g_{\phi}}+\frac{1}{24}|R_{g_{\phi}}|^2-\frac{1}{6}|\mathrm{Ric}_{g_{\phi}}|^2+\frac{1}{8}k_{g_{\phi}}^2
\end{equation*}
and
\begin{equation*}
  \mathbf{a}_1=\frac{1}{2}k_{g_F},\;\mathbf{a}_2=\frac{1}{3}\triangle_{g_F} k_{g_F}+\frac{1}{24}|R_{g_F}|^2-\frac{1}{6}|\mathrm{Ric}_{g_F}|^2+\frac{1}{8}k_{g_F}^2.
\end{equation*}

Set
$$x=F'(t),\;\varphi(x)=F''(t),$$
then  we have the following results.

$(\mathrm{I})$ For $d_0=1$, let
\begin{equation*}
 a_1=-\frac{1}{2}d(d+1)B,\;\mathbf{a}_1=-\frac{1}{2}(d+1)(d+2)A.
\end{equation*}
$\mathbf{a}_1$ is a constant if and only if $a_1$ is a constant and
\begin{equation}\label{e5.61}
  \varphi(x)=A(1+x)^2-B(1+x)+\frac{C_1}{(1+x)^{d-1}}+\frac{C_2}{(1+x)^d},
\end{equation}
where $C_1$ and $C_2$ are constants.

$(\mathrm{II})$ For $d_0>1$ and $d=1$, $\mathbf{a}_1$ is a constant if and only if $a_1$ is
a constant and
\begin{equation}\label{e5.61.1}
  \varphi(x)=\left(\frac{d_0-a_1}{d_0+1}+\frac{d_0}{2}C\right)x^2+(C+1)x-C+\frac{C}{1+x},
\end{equation}
here $C$ is a constant.

$(\mathrm{III})$ For $d_0=1$, let
\begin{equation*}
  a_1=-\frac{1}{2}d(d+1)B,\;\mathbf{a}_1=-\frac{1}{2}(d+1)(d+2)A.
\end{equation*}
 If $\mathbf{a}_1$ and $\mathbf{a}_2$ are constants, then $a_1$ and $a_2$ are constants,
\begin{equation}\label{e5.63}
a_2=\frac{1}{24}(d-1)d(d+1)(3d+2)B^2,\;\mathbf{a}_2=\frac{1}{24}d(d+1)(d+2)(3d+5)A^2
\end{equation}
and
\begin{equation}\label{e5.62}
  \varphi(x)=A(1+x)^2-B(1+x)
\end{equation}
for $d>1$,
\begin{equation}\label{e5.64}
  \varphi(x)=A(1+x)^2-B(1+x)+C
\end{equation}
for $d=1$, here $A,B, C$ are  constants.

$(\mathrm{IV})$ For $d_0>1$ and $d=1$, let $A=\frac{d_0-a_1}{d_0+1}$. If $\mathbf{a}_1$ and $\mathbf{a}_2$ are
constants, then $a_1$ is a constant,
\begin{equation*}
 a_2=0,\; \mathbf{a}_1=-A\frac{(d_0+1)(d_0+2)}{2},\;\mathbf{a}_2=A^2\frac{d_0(d_0+1)(d_0+2)(3d_0+5)}{24}
\end{equation*}
and
\begin{equation*}
  \varphi(x)=Ax^2+x.
\end{equation*}

$(\mathrm{V})$ For $d_0>1$ and $d>1$, let $n=d+d_0$. If $\mathbf{a}_1$ and $\mathbf{a}_2$ are constants, then
\begin{equation*}
  \varphi(x)=x(x+1),
\end{equation*}
\begin{equation*}
  a_1=-\frac{d(d+1)}{2},\;a_2=\frac{(d-1)d(d+1)(3d+2)}{24}
\end{equation*}
and
\begin{equation*}
  \mathbf{a}_1 =-\frac{n(n+1)}{2},\;\mathbf{a}_2=\frac{(n-1)n(n+1)(3n+2)}{24}.
\end{equation*}
}\end{Theorem}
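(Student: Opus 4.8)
The plan is to run everything through the two master identities \eqref{e5.48} and \eqref{e5.49} of Theorem \ref{Th:5.2}, which present $\mathbf{a}_1$ and $\mathbf{a}_2$ as a \emph{base part}, carried by $a_1(z)$ and $a_2(z)$, plus a \emph{fibre part} depending only on $t$ through $x=F'(t)$ and $\varphi$. The engine of the whole argument is separation of variables: since for every fixed $z\in\Omega$ the value $t=\phi(z)+\ln\|w\|^2$ sweeps all of $(-\infty,0)$ as $w$ varies (take $\|w\|^2=e^{t-\phi(z)}$), the base quantities $a_1,a_2$ and the fibre functions of $x$ may be varied independently.

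First I would dispose of the $\mathbf{a}_1$ constraint, which covers parts $(\mathrm{I})$ and $(\mathrm{II})$. Writing \eqref{e5.48} as $\mathbf{a}_1=\frac{a_1}{1+x}+g(x)$ with $g$ a function of $x$ alone, constancy of $\mathbf{a}_1$ forces $a_1$ to be constant: otherwise two base points sharing the same $t$ but with distinct $a_1$-values give $\frac{a_1(z_1)-a_1(z_2)}{1+x}\neq 0$. Once $a_1$ is constant, \eqref{e5.48} becomes a linear second-order ODE for $\varphi$; the substitution $u=(1+x)^d x^{d_0-1}\varphi$ turns it into $u''$ equal to an explicit sum of powers of $(1+x)$ and $x$, which I integrate twice. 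For $d_0=1$ this produces the quadratic-plus-rational form of part $(\mathrm{I})$ with two integration constants $C_1,C_2$; for $d_0>1,\,d=1$ the same scheme yields the form of part $(\mathrm{II})$, after the regularity of $\varphi$ at $x=0$ and the normalization $\varphi(0)=0$, $\varphi'(0)=1$ forced by \eqref{ap1.2} collapse the two constants into the single free parameter $C$ (which merely records the value of $\mathbf{a}_1$).

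Next I would impose the $\mathbf{a}_2$ constraint, handling parts $(\mathrm{III})$, $(\mathrm{IV})$ and $(\mathrm{V})$. Formula \eqref{e5.49} likewise splits as $\mathbf{a}_2=\frac{a_2}{(1+x)^2}+h(x)$, the coefficient of $a_1$ and all remaining terms being functions of $x$ only; the same separation argument then forces $a_2$ to be constant, and $h(x)$ to be constant as well. Substituting the $\varphi$ found above into $h$---which requires the explicit $\sigma,\chi$ and the derivatives $\sigma',\chi',\varphi',\varphi''$---yields a rational function of $x$, and demanding that it be constant produces polynomial identities in the surviving integration constants. Solving these identities shows that the only admissible profiles are $\varphi=A(1+x)^2-B(1+x)$ for $d>1$ (so $C_1=C_2=0$), with an additive constant allowed only when $d=1$, and $\varphi=Ax^2+x$ in the mixed range $d_0>1,\,d=1$; matching the constant term then delivers the stated closed forms for $a_2$ and $\mathbf{a}_2$, and for part $(\mathrm{V})$ pins down $\varphi=x(x+1)$ together with the displayed values.

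The main obstacle is precisely this substitution-and-matching step for $\mathbf{a}_2$. The identity \eqref{e5.49} carries on the order of a dozen nonlinear terms in $\varphi$ and its derivatives (through $\sigma$ and $\chi$); after inserting the rational profile one must expand $h(x)$ as a Laurent polynomial in $1+x$ (or in $x$) and show that every non-constant coefficient vanishes exactly when the extra constants $C_1,C_2$ (respectively $C$) are zero. Arranging this expansion so that the cancellations are visible---rather than surrendering to brute-force algebra---is where the care lies; the boundary conditions \eqref{ap1.2}--\eqref{ap1.3} are invoked at the end to discard any spurious branch produced by the ODE that fails the positivity $F'>0$, $F''>0$ or the integrability as $t\to-\infty$.
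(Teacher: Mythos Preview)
Your overall strategy is sound and matches the paper's: separate variables in \eqref{e5.48} and \eqref{e5.49} to force $a_1,a_2$ constant, solve the resulting linear ODE in $\varphi$ for $\mathbf{a}_1$, and then kill the integration constants using the $\mathbf{a}_2$ constraint. Parts $(\mathrm{I})$, $(\mathrm{II})$ and $(\mathrm{IV})$ go exactly as you describe.

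Where the paper differs is in how it handles the $\mathbf{a}_2$ constraint in parts $(\mathrm{III})$ and $(\mathrm{V})$. You propose to substitute the candidate $\varphi$ directly into \eqref{e5.49} and expand---and you rightly flag this as the main obstacle, since \eqref{e5.49} is long and involves $\sigma,\chi$ and their derivatives. The paper instead observes that once $\mathbf{a}_1$ is constant the scalar curvature $k_{g_F}$ is constant, so $\triangle_{g_F}k_{g_F}=0$ and therefore $\mathbf{a}_2-\tfrac{1}{8}k_{g_F}^2=\tfrac{1}{24}\bigl(|R_{g_F}|^2-4|\mathrm{Ric}_{g_F}|^2\bigr)$ is constant. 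The combination $|R_{g_F}|^2-4|\mathrm{Ric}_{g_F}|^2$, read directly from \eqref{e5.55} and \eqref{e8.1}, is dramatically simpler than \eqref{e5.49}: after inserting the profile from $(\mathrm{I})$ it collapses to a short Laurent series in $1+x$ whose two leading coefficients are visibly proportional to $C_2^2$ and (for $d>1$) $C_1^2$, forcing $C_2=0$ and then $C_1=0$. The full formula \eqref{e5.49} is invoked only at the very end to read off the numerical values of $a_2$ and $\mathbf{a}_2$. This shortcut is exactly what sidesteps the brute-force expansion you were worried about.

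One further point: your plan is explicit about solving the $\mathbf{a}_1$-ODE only for $d_0=1$ and for $d_0>1,\,d=1$, but part $(\mathrm{V})$ ($d_0>1,\,d>1$) requires its own integration step. There the general solution of $\bigl((1+x)^d x^{d_0-1}\varphi\bigr)''=\cdots$ picks up singular pieces in both $x$ and $1+x$; the paper first uses $\varphi(0)=0$ and a partial-fraction rewrite to bring $\varphi$ to the form $\sum_{j=0}^2 A_j(1+x)^j+\sum_j E_j(1+x)^{-j}$, then applies the $|R_{g_F}|^2-4|\mathrm{Ric}_{g_F}|^2$ trick twice to kill the rational tail, and finally feeds the surviving polynomial back into \eqref{e5.48} to pin down $A_2=1$. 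Your proposal would need to fill in this step.
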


\begin{proof}[Proof]

$(\mathrm{I})$ Using \eqref{e5.48}, we get \eqref{e5.61}.

$(\mathrm{II})$ For $d_0>1$ and $d=1$, if $\mathbf{a}_1$ is a constant, from \eqref{e5.48}, we obtain $a_1$ is a constant, and $\varphi(x)$ can be written as
\begin{equation*}
  \varphi(x)=\sum_{j=0}^2A_j(1+x)^j+\sum_{j=1}^{d_0-1}\frac{B_j}{x^j}+\frac{C}{1+x}.
\end{equation*}

Since
$$\lim_{t\rightarrow -\infty}F'(t)=\lim_{t\rightarrow -\infty}F''(t)=0,$$
  we have $\varphi(0)=0$, thus
\begin{equation*}
B_j=0 \;(1\leq j\leq d_0-1),\;\sum_{j=0}^2A_j+C=0.
\end{equation*}
By \eqref{e5.48}, we get
\begin{eqnarray*}
   & &\frac{\left((1+x)^dx^{d_0-1}\varphi(x)\right)''}{(1+x)^dx^{d_0-1}} -\frac{d_0(d_0-1)}{x}-\frac{2a_1}{1+x}+2\mathbf{a}_1 \\
   &=&(1+d_0)(2+d_0)A_2+2\mathbf{a}_1+d_0(d_0-1)\frac{2A_2+A_1-C-1}{x}\\
    &&+\frac{2A_1-2(d_0-1)A_0-2a_1+(d_0-1)(d_0-2)C}{1+x}\\
    &\equiv &0,
\end{eqnarray*}
thus
\begin{equation*}
  \mathbf{a}_1=-\frac{1}{2}(1+d_0)(2+d_0)A_2,\;2A_2+A_1-C-1=0
\end{equation*}
and
\begin{equation*}
 2A_1-2(d_0-1)A_0-2a_1+(d_0-1)(d_0-2)C=0.
\end{equation*}

In view of
$$A_0+A_1+A_2+C=0,$$
we obtain
$$A_2=\frac{1}{2}d_0C+\frac{d_0-a_1}{d_0+1},$$
then
\begin{eqnarray*}
  \varphi(x) &=&\sum_{j=0}^2A_j(1+x)^j+\frac{C}{1+x}    \\
   &=&A_2x^2+(A_1+2A_2)x+(A_0+A_1+A_2)+\frac{C}{1+x}  \\
   &=& \left(\frac{d_0-a_1}{d_0+1}+\frac{d_0}{2}C\right)x^2+(C+1)x-C+\frac{C}{1+x}.
\end{eqnarray*}

$(\mathrm{III})$ By  \eqref{e5.48} and \eqref{e5.49}, if $\mathbf{a}_1$ and
$\mathbf{a}_2$ are constants, then $a_1$ and $a_2$ are constants, thus
$|R_{g_{\phi}}|^2-4|\mathrm{Ric}_{g_{\phi}}|^2$ and
$|R_{g_F}|^2-4|\mathrm{Ric}_{g_F}|^2$ also are constants.

Substituting \eqref{e5.61} into \eqref{e8.1} and \eqref{e5.55}, we have
\begin{eqnarray*}
   & & |R_{g_F}|^2-4|\mathrm{Ric}_{g_F}|^2 \\
   &=& \frac{1}{(1+x)^2}\left(|R_{g_{\phi}}|^2-4|\mathrm{Ric}_{g_{\phi}}|^2\right)+\left(\frac{8\sigma}{(1+x)^2}-\frac{4\varphi}{(1+x)^3}\right)k_{g_{\phi}} \\
   & &+2d(d+1)\frac{\varphi^2}{(1+x)^4}+4d\left(\left(\frac{\varphi}{1+x}\right)'\right)^2 + \left(\varphi''\right)^2-4\left(\sigma'\right)^2-\frac{4d}{(1+x)^2}\sigma^2\\
   &=&\frac{A_{2d+4}}{(1+x)^{2d+4}}+\frac{A_{2d+3}}{(1+x)^{2d+3}}+\frac{A_{2d+2}}{(1+x)^{2d+2}}+\frac{A_{2}}{(1+x)^{2}}+A_0,
\end{eqnarray*}
where
\begin{equation*}
  A_{2d+4}=d(d+1)(d+2)(d+3)C_2^2,\; A_{2d+3}=2d(d+1)^2(d+2)C_1C_2,
\end{equation*}
\begin{equation*}
  A_{2d+2}=(d-1)d(d+1)(d+2)C_1^2,\; A_2=|R_{g_{\phi}}|^2-4|\mathrm{Ric}_{g_{\phi}}|^2+\frac{4d+2}{d(d+1)}k_{g_{\phi}}^2
\end{equation*}
and
\begin{equation*}
  A_0=-2(d+1)(d+2)(2d+3)A^2.
\end{equation*}

From $A_{2d+4}=0$, we have $C_2=0$, so $A_{2d+3}=0$.

For $d>1$, by $A_{2d+2}=0$,  it follows that $C_1=0$.

For $d=1$, for any $C_1$, $A_{2d+2}=0$.

Substituting $\varphi(x)=A(1+x)^2-B(1+x)$ and $\varphi(x)=A(1+x)^2-B(1+x)+C$ into \eqref{e5.49}, we obtain
\begin{equation*}
  \mathbf{a}_2=\frac{a_2-\frac{1}{24}(d-1)d(d+1)(3d+2)B^2}{(1+x)^2}+\frac{d(d+1)(d+2)(3d+5)A^2}{24}
\end{equation*}
for $d>1$, and
\begin{equation*}
  \mathbf{a}_2=2A^2 + \frac{a_2}{(x + 1)^2}
\end{equation*}
for $d=1$, respectively. This gives \eqref{e5.63}.

$(\mathrm{IV})$ Using \eqref{e5.61.1} and \eqref{e5.49}, we get
\begin{equation*}
  \mathbf{a}_2=\frac{C^2}{(1+x)^6}+\sum_{j=0}^5\frac{F_j}{(1+x)^j},
\end{equation*}
thus
\begin{equation*}
  C=0,\;\varphi(x)=\frac{d_0-a_1}{d_0+1}x^2+x.
\end{equation*}
Then put them into \eqref{e5.48} and \eqref{e5.49}, we have
\begin{equation*}
  \mathbf{a}_1=\frac{(a_1-d_0)(d_0+2)}{2}
\end{equation*}
and
\begin{equation*}
  \mathbf{a}_2=\frac{a_2}{(1+x)^2}+\frac{d_0(d_0+2)(3d_0+5)(a_1-d_0)^2}{24(d_0+1)}.
\end{equation*}

Let
\begin{equation*}
  A=\frac{d_0-a_1}{d_0+1},
\end{equation*}
then
\begin{equation*}
 a_2=0,\; \mathbf{a}_1=-A\frac{(d_0+1)(d_0+2)}{2},\;\mathbf{a}_2=A^2\frac{d_0(d_0+1)(d_0+2)(3d_0+5)}{24}.
\end{equation*}

$(\mathrm{V})$ For $d>1$ and $d_0>1$, according to \eqref{e5.48}, we obtain
\begin{eqnarray*}
   & &  ((1+x)^dx^{d_0-1}\varphi)'' \\
   &=& 2a_1(1+x)^{d-1}x^{d_0-1}+d_0(d_0-1)(1+x)^dx^{d_0-2}-2\mathbf{a}_1(1+x)^dx^{d_0-1},
\end{eqnarray*}
using integration by parts, we obtain
\begin{eqnarray*}
   & & (1+x)^dx^{d_0-1}\varphi(x) \\
   &=& \sum_{j=0}^{d_0-1}c_{j1}x^j(1+x)^{d+d_0-j}+\sum_{j=0}^{d_0-1}c_{j2}x^j(1+x)^{d+d_0+1-j}+(-1)^{d_0-1}(C_1x+C_2),
\end{eqnarray*}
thus
\begin{equation*}
  \varphi(x)=\sum_{j=0}^{d_0-1}d_{j1}\frac{(1+x)^{j+1}}{x^j}+\sum_{j=0}^{d_0-1}d_{j2}\frac{(1+x)^{j+2}}{x^j}+(-1)^{d_0-1}\frac{C_1x+C_2}{(1+x)^dx^{d_0-1}},
\end{equation*}
which can be written as
\begin{equation*}
  \varphi(x)=\sum_{j=0}^2A_j(x+1)^j+\sum_{j=1}^{d_0-1}\frac{B_j(a_1,\mathbf{a}_1,d,d_0)}{x^j}+(-1)^{d_0-1}\frac{C_1x+C_2}{x^{d_0-1}(1+x)^d}.
\end{equation*}

Owing to $\varphi(0)=0$ and
\begin{eqnarray*}
   & & (-1)^{d_0-1} \frac{C_1x+C_2}{x^{d_0-1}(1+x)^d} \\
   &=& \sum_{j=1}^{d_0-1}\frac{D_j}{x^j}+\frac{C_1-C_2}{(1+x)^d}+\frac{(d_0-1)C_2-(d_0-2)C_1}{(1+x)^{d-1}}+\sum_{j=1}^{d-2}\frac{E_j}{(1+x)^j},
\end{eqnarray*}
we have
\begin{equation*}
  \varphi(x)=\sum_{j=0}^2A_j(x+1)^j+\frac{C_1-C_2}{(1+x)^d}+\frac{(d_0-1)C_2-(d_0-2)C_1}{(1+x)^{d-1}}+\sum_{j=1}^{d-2}\frac{E_j}{(1+x)^{j}}.
\end{equation*}

From \eqref{e5.55} and \eqref{e8.1}, we have

\begin{equation*}
  |R_{g_F}|^2-4|\mathrm{Ric}_{g_F}|^2=\frac{d(d+1)(d+2)(d+3)(C_1-C_2)^2}{(1+x)^{2d+4}}+\sum_{j=1}^{2d+3}\frac{F_j}{(1+x)^j}+\sum_{j=0}^4\frac{G_j}{x^j},
\end{equation*}
which implies that
\begin{equation*}
  C_1=C_2,
\end{equation*}
so
\begin{equation*}
  |R_{g_F}|^2-4|\mathrm{Ric}_{g_F}|^2=\frac{(d-1)d(d+1)(d+2)C_1^2}{(1+x)^{2d+2}}+\sum_{j=1}^{2d+1}\frac{F_j}{(1+x)^j}+\sum_{j=0}^4\frac{G_j}{x^j},
\end{equation*}
therefore
\begin{equation*}
  C_1=C_2=0,
\end{equation*}
namely
\begin{equation}\label{e5.75}
  \varphi(x)=\sum_{j=0}^2A_j(1+x)^j.
\end{equation}

By \eqref{e5.75} and \eqref{e5.48}, we obtain
\begin{equation*}
  \mathbf{a}_1=-\frac{d(d-1)}{2(1+x)^2}A_0+\frac{H_1}{x}+\frac{H_2}{x^2}+\frac{H_3}{1+x}+H_0,
\end{equation*}
since $\mathbf{a}_1$ is a constant, then
\begin{equation*}
  A_0=0,
\end{equation*}
which combines with $\varphi(0)=0$, we get
$A_1+A_2=0,$
thus
$\varphi(x)=A_2x(1+x).$

Substituting
$$\varphi(x)=A_2x(1+x)$$
into \eqref{e5.48}, we have
\begin{equation*}
   \mathbf{a}_1=-\frac{(d+d_0)(d+d_0+1)}{2}A_2+ \frac{d(d+ 1)A_2 + 2a_1}{2(x + 1)}-\frac{(d_0 - 1)d_0(A_2-1)}{2x},
\end{equation*}
notice that $d_0>1$, thus
\begin{equation*}
  A_2=1,\;a_1=-\frac{d(d+ 1)}{2},\; \mathbf{a}_1 =-\frac{(d+d_0)(d+d_0+1)}{2}.
\end{equation*}

Finally, from \eqref{e3.49}, we obtain
\begin{equation*}
  a_2=\frac{(d-1)d(d+1)(3d+2)}{24},\;\mathbf{a}_2=\frac{(n-1)n(n+1)(3n+2)}{24},
\end{equation*}
where $n=d+d_0$.
\end{proof}

\begin{Theorem}\label{Th:5.5}{
Under assumptions of Theorem \ref{Th:5.2.1}, we have the following results.

$(\mathrm{i})$ For $d=1$,  if $\mathbf{a}_1$ and
$\mathbf{a}_2$ are constants, then
\begin{equation}\label{ap3.1}
  \varphi(x)= Ax^2+x
\end{equation}
and
\begin{equation}\label{ap3.2}
  F(t)=\left\{\begin{array}{ll}
              -\frac{1}{A}\ln(1-ce^{t}),   &cA>0,c\leq 1,  \\
              ce^{t},   & c>0,A=0.
              \end{array}
  \right.
\end{equation}

$(\mathrm{ii})$ For $d>1$, if $\mathbf{a}_1$ and $\mathbf{a}_2$ are
constants, then
\begin{equation}\label{ap3.3}
\varphi(x)=x(x+1)
\end{equation}
and
\begin{equation}\label{ap3.4}
 F(t)=-\ln\left(1-ce^{t}\right),\;0<c\leq 1.
\end{equation}
}\end{Theorem}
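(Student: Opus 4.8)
The plan is to use Theorem \ref{Th:5.2.1} as the starting point, since it already narrows $\varphi$ to an explicit finite-parameter family whenever $\mathbf{a}_1,\mathbf{a}_2$ are constant, and then to proceed in two stages: sharpen that family to the single normal form $\varphi(x)=Ax^2+x$ (for $d=1$), respectively $\varphi(x)=x(x+1)$ (for $d>1$), by feeding in the boundary behaviour \eqref{ap1.1}--\eqref{ap1.3}; and finally recover $F$ by integrating the first-order autonomous ODE $F''(t)=\varphi(F'(t))$ obtained from $x=F'(t)$, $\varphi(x)=F''(t)$.

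First, the reduction of $\varphi$. Since $\mathbf{a}_1,\mathbf{a}_2$ are constant, Theorem \ref{Th:5.2.1} applies. For $d=1$ with $d_0>1$, part (IV) already gives $\varphi(x)=Ax^2+x$, so nothing further is needed. For $d=1$ with $d_0=1$, part (III) only gives $\varphi(x)=A(1+x)^2-B(1+x)+C$, and the two surplus constants must be removed using the boundary data. The hypotheses \eqref{ap1.3} first force $F'(-\infty)=0$ (otherwise $F$ would diverge, contradicting $F(-\infty)=0$), so $x\to 0$ as $t\to-\infty$; together with $F''(t)=\bigl(F''(t)/e^t\bigr)e^t\to 0$ this yields $\varphi(0)=0$, i.e. $C=B-A$, collapsing $\varphi$ to $Ax^2+(2A-B)x$. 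The linear coefficient $\lambda=2A-B=\varphi'(0)$ is then pinned by the decay rate at the boundary: near the fixed point $x=0$ the autonomous equation linearises as $x(t)\sim Ke^{\lambda t}$, so $F''(t)=\varphi(x)\sim\lambda K e^{\lambda t}$ and $F''(t)/e^t\sim\lambda K e^{(\lambda-1)t}$; the requirement $0<\lim_{t\to-\infty}F''(t)/e^t<+\infty$ forces $\lambda=1$, i.e. $\varphi(x)=Ax^2+x$. The case $d>1$ is analogous: part (V) gives $\varphi(x)=x(x+1)$ when $d_0>1$, while for $d_0=1$ part (III) gives $\varphi(x)=A(1+x)^2-B(1+x)$, and $\varphi(0)=0$ together with the same asymptotics force $B=A=1$, hence $\varphi(x)=x(x+1)$.

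Second, the integration. Put $x=F'(t)$, so $dx/dt=F''(t)=\varphi(x)=Ax^2+x$ is separable. Using the partial fraction decomposition $\frac{1}{x(Ax+1)}=\frac{1}{x}-\frac{A}{Ax+1}$ and integrating gives $\ln\bigl|\frac{x}{Ax+1}\bigr|=t+\mathrm{const}$, hence $\frac{x}{Ax+1}=Ce^t$, which solved for $x$ yields
$$F'(t)=\frac{Ce^t}{1-CAe^t}.$$
A further integration (substituting $u=1-CAe^t$) together with $F(-\infty)=0$ fixes the constant of integration and gives $F(t)=-\frac{1}{A}\ln(1-ce^t)$ with $c=CA$; the degenerate case $A=0$, where $\varphi(x)=x$, integrates directly to $F(t)=ce^t$. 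For $d>1$ this specialises to $A=1$ and hence $F(t)=-\ln(1-ce^t)$. The admissible range of $c$ is then read off from \eqref{ap1.2}/\eqref{ap1.3}: the requirement $1-ce^t>0$ throughout $t\in(-\infty,0)$ (needed both for the logarithm and for the domain to be nonempty) gives $c\le 1$, while $F''>0$ forces $x=F'(t)>0$ near $t\to-\infty$ and hence $cA>0$; for $d>1$ these combine to $0<c\le 1$.

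The main obstacle I expect is the $d_0=1$ reduction rather than the integration. There Theorem \ref{Th:5.2.1} leaves two free constants $B,C$, and the only leverage to remove them is the boundary data, so the argument hinges on converting the qualitative hypotheses \eqref{ap1.3} into the quantitative facts $\varphi(0)=0$ and $\varphi'(0)=1$. Making the linearisation estimate $x(t)\sim Ke^{\lambda t}$ precise—controlling the quadratic correction and justifying that an admissible solution actually approaches the fixed point $x=0$ as $t\to-\infty$—is the delicate point. Once $\varphi$ is fixed, the remaining separable ODE, the matching of the integration constant to $F(-\infty)=0$, and the determination of the ranges of $c$ are routine.
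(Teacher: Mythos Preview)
Your proposal is correct and uses the same ingredients as the paper---Theorem \ref{Th:5.2.1} for the finite-parameter family of $\varphi$, the boundary conditions \eqref{ap1.1}--\eqref{ap1.3} to fix the parameters, and separation of variables to recover $F$---but you organise the argument differently in a way that is a bit more efficient. The paper first integrates the general quadratic $\varphi(x)=Ax^2+Dx+E$ case-by-case according to the discriminant (five subcases), obtains the corresponding five explicit forms of $F$, and only then uses $F(-\infty)=0$ and the $F''(t)/e^t$ condition to eliminate all but the correct branch; in the surviving branch the exponent matches automatically once the parameters are read off. You instead extract the two constraints $\varphi(0)=0$ and $\varphi'(0)=1$ directly from the boundary data \emph{before} integrating, which collapses the family to $\varphi(x)=Ax^2+x$ immediately and leaves only a single separable ODE to solve. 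The gain is that you avoid the five-way case split; the cost is that you must justify the linearisation $x(t)\sim Ke^{\lambda t}$ near the fixed point, which you correctly flag as the delicate step (note that you should also explicitly rule out $\lambda\le 0$: $\lambda<0$ contradicts $F''>0$ for small $x>0$, and $\lambda=0$ gives $x(t)\sim 1/|t|$, hence $F''(t)/e^t\to\infty$). For $d>1$, $d_0=1$, the paper additionally exploits the built-in root $\varphi(-1)=0$ of the form $A(1+x)^2-B(1+x)$ to pin down $A=1$, whereas you get the same conclusion from $\varphi(0)=0$ together with $\varphi'(0)=1$; both are valid.
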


\begin{proof}[Proof]

$(\mathrm{I})$
Let $$ x=F'(t),\;\varphi(x)=F''(t),$$
according to Theorem \ref{Th:5.2.1}, we can assume that
 \begin{equation*}
   \varphi(x)=Ax^2+Dx+E.
 \end{equation*}

Using $\frac{dx}{dt}=\varphi(x)$, $x=F'(t)$ and $F''(t)>0$, $\varphi(x)$ and $F(t)$ can be expressed as follows:
\begin{equation}\label{ap3.6}
  \varphi(x)=\left\{\begin{array}{ll}
                    A(x-\lambda)^2,   & A>0, \\
                    A((x-\lambda)^2+\mu^2),   & A>0,\mu>0, \\
                    A((x-\lambda)^2-\mu^2),   & \mu A>0, \\
                    2\lambda,   & \lambda>0, \\
                    \lambda(x-\mu),   &  \lambda\neq 0,
                    \end{array}
  \right.
\end{equation}
\begin{equation}\label{3.6}
  F(t)=\left\{\begin{array}{ll}
              -\frac{1}{A}\ln|t+c|+\lambda t+c_1,   &A>0,  \\
              -\frac{1}{A}\ln|\cos(\mu At+c)|+\lambda t+c_1,   &  A>0,\mu>0, \\
              -\frac{1}{A}\ln\left|1-ce^{2\mu At}\right|+(\lambda+\mu)t+c_1,   & \mu A>0,cA>0, \\
              \lambda t^2+ct+c_1,   &\lambda>0,  \\
              ce^{\lambda t}+\mu t+c_1,   & \lambda\neq 0,c>0.
              \end{array}
  \right.
\end{equation}

$(\mathrm{II})$ For the case of $d=d_0=1$, by $F(-\infty)=0$ and \eqref{3.6}, $\varphi(x)$ and $F(t)$ can be expressed as
\begin{equation*}
  \varphi(x)=\left\{\begin{array}{ll}
                     Ax(x+2\mu),  & \mu A>0, \\
                      \lambda x, &  \lambda>0,
                    \end{array}
  \right.
\end{equation*}
\begin{equation*}
  F(t)=\left\{\begin{array}{ll}
              -\frac{1}{A}\ln\left|1-ce^{2\mu At}\right|,   &\mu A>0,cA>0,c\leq 1,  \\
              ce^{\lambda t},   & c>0,\lambda>0,
              \end{array}
  \right.
\end{equation*}
respectively. In view of
\begin{equation*}
 0< \lim_{t\rightarrow -\infty}\frac{F''(t)}{e^t}<+\infty,
\end{equation*}
we get $2\mu A=1$ for $F(t)=-\frac{1}{A}\ln\left|1-ce^{2\mu At}\right|$, and $\lambda=1$ for $F(t)=ce^{\lambda t}$. Thus $\varphi(x)$ and $F(t)$ can be written as \eqref{ap3.1} and \eqref{ap3.2}, respectively.

$(\mathrm{III})$ For the case of $d=1$ and $d_0>1$, applying $\varphi(x)=Ax^2+x$, $F(-\infty)=0$ and \eqref{3.6}, we obtain \eqref{ap3.1} and \eqref{ap3.2}.

$(\mathrm{IV})$ For the case of $d>1$ and $d_0=1$, from $\varphi(x)=A(1+x)^2-B(1+x)$ and $F(-\infty)=0$, by \eqref{3.6}, we have
\begin{equation*}
  \varphi(x)=Ax(x+2\mu), \mu A>0
\end{equation*}
and
\begin{equation*}
  F(t)=-\frac{1}{A}\ln\left|1-ce^{2\mu At}\right|,  \mu A>0,cA>0,c\leq 1.
\end{equation*}
Since
\begin{equation*}
 0< \lim_{t\rightarrow -\infty}\frac{F''(t)}{e^t}<+\infty,
\end{equation*}
then $2\mu A=1$. Thus $\varphi(x)=Ax^2+x$, using $\varphi(-1)=0$, we get \eqref{ap3.3} and \eqref{ap3.4}.

$(\mathrm{V})$ For the case of $d>1$ and $d_0>1$, from $\varphi(x)=x^2+x$ and $F(-\infty)=0$, by \eqref{3.6}, we have \eqref{ap3.4}.
\end{proof}

\begin{proof}[Proof of Theorem \ref{apth:3.3}]
According to Theorem \ref{Th:5.5}, Theorem \ref{Th:3.2} and  Theorem \ref{Th:3.3}, we obtain that both $\mathbf{a}_1$ and $\mathbf{a}_2$ are constants on $(M,g_F)$ if and only if

 $(\mathrm{i})$
\begin{equation}\label{ap3.16}
 F(t)=\left\{\begin{array}{ll}
              -\frac{1}{A}\ln(1-ce^t),   &cA>0,c\leq 1,  \\
              ce^t,   & A=0,c>0
              \end{array}
  \right.
\end{equation}
and $a_2=0$ for $d=1$, where $A=\frac{d_0-a_1}{d_0+1}$.

 $(\mathrm{ii})$
\begin{equation}\label{ap3.17}
 F(t)=-\ln\left(1-ce^t\right),\;0<c\leq 1,
\end{equation}
$a_1=-\frac{d(d+1)}{2}$ and $a_2=\frac{(d-1)d(d+1)(3d+2)}{24}$ for $d>1$.

Notice that we do not consider the completeness of the metrics $g_{\phi}$ and $g_F$ in the formulas \eqref{ap3.16} and \eqref{ap3.17}.

Let
\begin{equation*}
  f(u)=\frac{1}{2}F(2u),\;\tau=f'(u),\;\varphi(\tau)=f''(u),
\end{equation*}
then
\begin{equation*}
  \varphi(\tau)=2\tau(1+A\tau),\;\tau\in I=\left[0,\frac{1}{A}\frac{c}{1-c}\right)
\end{equation*}
for
\begin{equation*}
  F(t)= -\frac{1}{A}\ln(1-ce^t),  \;cA>0,\;c\leq 1,\;t\in[-\infty,0),
\end{equation*}
and
\begin{equation*}
  \varphi(\tau)=2\tau,\;\tau\in I=\left[0,c\right)
\end{equation*}
for
\begin{equation*}
  F(t)= ce^t,\;c>0,\;t\in[-\infty,0).
\end{equation*}

By Proposition 2.3 of \cite{Hwang-Singer}, $g_F$ is a complete K\"{a}hler  metric on the domain $M$ for $d_0=1$ if and only if
\begin{equation*}
 F(t)=\left\{\begin{array}{ll}
              -\frac{1}{A}\ln(1-e^t),   &d=1,A>0,  \\
             -\ln\left(1-e^t\right),   & d>1.
              \end{array}
  \right.
\end{equation*}
Using mathematical induction, we obtain that $g_F$ is a complete K\"{a}hler  metric on the domain $M$ if and only if
\begin{equation*}
 F(t)=\left\{\begin{array}{ll}
              -\frac{1}{A}\ln(1-e^t),   &d=1,A>0,  \\
             -\ln\left(1-e^t\right),   & d>1.
              \end{array}
  \right.
\end{equation*}
So we complete the proof of Theorem \ref{apth:3.3}.
\end{proof}

\setcounter{equation}{0}
\section{Proof of Theorem \ref{Th:1.1}}

 In order to prove Theorem \ref{Th:1.1}, we first give the K\"{a}hler forms of  $\mathcal{G}$-invariant K\"{a}hler metrics  on the Cartan-Hartogs domain
$\Omega(\mu,d_0)$.

\begin{Lemma}\label{Le:7.1}
Let $\mathcal{G}$ be the set of mappings generated by \eqref{e7.3}, $\Phi$ be a  K\"{a}hler potential on the Cartan-Hartogs domain $\Omega(\mu,d_0)$. If for all $\Upsilon\in \mathcal{G}$,
\begin{equation*}
  \partial\bar{\partial}(\Phi\circ\Upsilon)=\partial\bar{\partial}\Phi,
\end{equation*}
then there are a unique real number $\nu>0$ and a unique real function $F$ with $F(0)=0$ such that
\begin{equation*}
 \partial\bar{\partial}(\nu\phi+F(\rho))=\partial\bar{\partial}\Phi,
\end{equation*}
where
\begin{equation*}
  \phi(z)=-\mu\ln N(z,\overline{z}),\; \rho=e^{\phi(z)}\|w\|^2.
\end{equation*}
\end{Lemma}

\begin{proof}[Proof]

$\mathbf{Step 1.}$ We prove that there exist a number $\nu\in\mathbb{C}$ and a function $F$ satisfying
\begin{equation*}
 \partial\bar{\partial}(\nu\phi+F(\rho))=\partial\bar{\partial}\Phi.
\end{equation*}

Let $Z=(z,w)$ and
\begin{equation*}
\mathbf{w}=\frac{w}{N(z,\bar{z})^{\frac{\mu}{2}}}=e^{\frac{1}{2}\phi(z)}w,\;\mathbf{w_0}=\frac{w_0}{N(z_0,\bar{z_0})^{\frac{\mu}{2}}}=e^{\frac{1}{2}\phi(z_0)}w_0.
\end{equation*}

By
\begin{equation*}
 \partial\bar{\partial}(\Phi\circ\Upsilon)=\partial\bar{\partial}\Phi
\end{equation*}
and
\begin{equation*}
  \frac{\partial^2(\Phi\circ\Upsilon)}{\partial Z^t\partial\overline{Z}}(Z)=\frac{\partial\Upsilon}{\partial Z^t}(Z)\frac{\partial^2\Phi}{\partial Z^t\partial\overline{Z}}(\Upsilon(Z))\overline{\left(\frac{\partial\Upsilon}{\partial Z^t}\right)^t}(Z),
\end{equation*}
we get
\begin{equation*}
  \frac{\partial^2\Phi}{\partial Z^t\partial\overline{Z}}(Z)=\frac{\partial\Upsilon}{\partial Z^t}(Z)\frac{\partial^2\Phi}{\partial Z^t\partial\overline{Z}}(\Upsilon(Z))\overline{\left(\frac{\partial\Upsilon}{\partial Z^t}\right)^t}(Z).
\end{equation*}

Let
\begin{equation*}
\left(
  \begin{array}{cc}
    A_{11} & A_{12} \\
    A_{21} & A_{22} \\
  \end{array}
\right)
 :=\left(
                                                                     \begin{array}{cc}
                                                                      \frac{\partial^2\Phi}{\partial z^t\partial \bar z}  & \frac{\partial^2\Phi}{\partial z^t\partial \bar w} \\
                                                                       \frac{\partial^2\Phi}{\partial w^t\partial \bar z} & \frac{\partial^2\Phi}{\partial w^t\partial \bar w} \\
                                                                     \end{array}
                                                                   \right),
\end{equation*}
using
\begin{equation*}
 \frac{\partial\Upsilon}{\partial Z^t}(Z)=\left(
                                                  \begin{array}{ll}
                                                    \frac{\partial\gamma}{\partial z^t}(z) & \frac{\partial\psi}{\partial z^t}wU\\
                                                    0 & \psi(z)U \\
                                                  \end{array}
                                                \right),
\end{equation*}
we obtain
\begin{eqnarray}
\nonumber  A_{11}(Z) &=& \frac{\partial\gamma}{\partial z^t}A_{11}(\Upsilon(Z))\overline{\left(\frac{\partial\gamma}{\partial z^t}\right)^t}+\frac{\partial\psi}{\partial z^t}wUA_{21}(\Upsilon(Z))\overline{\left(\frac{\partial\gamma}{\partial z^t}\right)^{t}} \\
\label{e7.5}   & & +\frac{\partial\gamma}{\partial z^t}A_{12}(\Upsilon(Z))\overline{\left(\frac{\partial\psi}{\partial z^t}wU\right)^t}+\frac{\partial\psi}{\partial z^t}wUA_{22}(\Upsilon(Z))\overline{\left(\frac{\partial\psi}{\partial z^t}wU\right)^t},
\end{eqnarray}
\begin{equation}\label{e7.6}
  A_{12}(Z)=\overline{\psi(z)}\frac{\partial\gamma}{\partial z^t}A_{12}(\Upsilon(Z))\overline{U^t}+\overline{\psi(z)}\frac{\partial\psi}{\partial z^t}wUA_{22}(\Upsilon(Z))\overline{U^t},
\end{equation}
\begin{equation}\label{e7.7}
  A_{21}(Z)=\psi(z)UA_{21}(\Upsilon(Z))\overline{\left(\frac{\partial\gamma}{\partial z^t}\right)^t}+\psi(z)UA_{22}(\Upsilon(Z))\overline{\left(\frac{\partial\psi}{\partial z^t}wU\right)^t}
\end{equation}
and
\begin{equation}\label{e7.8}
   A_{22}(Z)=|\psi(z)|^2UA_{22}(\Upsilon(Z))\overline{U^t}.
\end{equation}

For $Z=(z_0,w_0)$, since
$$\gamma(z_0)=0,\Upsilon(z_0,w_0)=(0,\mathbf{w_0}U),\;\psi(z_0)=e^{\frac{1}{2}\phi(z_0)},\;\frac{\partial\psi}{\partial z^t}(z_0)=e^{\frac{1}{2}\phi(z_0)}\frac{\partial\phi}{\partial z^t}(z_0),$$
we have
\begin{eqnarray}
\nonumber  A_{11}(z_0,w_0) &=& \frac{\partial\gamma}{\partial z^t}(z_0)A_{11}(0,\mathbf{w_0}U)\overline{\left(\frac{\partial\gamma}{\partial z^t}(z_0)\right)^t}+\frac{\partial\psi}{\partial z^t}(z_0)w_0UA_{21}(0,\mathbf{w_0}U)\overline{\left(\frac{\partial\gamma}{\partial z^t}(z_0)\right)^{t}} \\
\nonumber   & & +\frac{\partial\gamma}{\partial z^t}(z_0)A_{12}(0,\mathbf{w_0}U)\overline{\left(\frac{\partial\psi}{\partial z^t}(z_0)wU\right)^t}\\
\label{e7.9}  & &+e^{\phi(z_0)}\frac{\partial\phi}{\partial z^t}(z_0)w_0UA_{22}(0,\mathbf{w_0}U)\overline{U^t}\;\overline{w_0^t}\frac{\partial\phi}{\partial\bar{z}}(z_0),
\end{eqnarray}
\begin{equation}\label{e7.10}
  A_{12}(z_0,w_0)=e^{\frac{1}{2}\phi(z_0)}\frac{\partial\gamma}{\partial z^t}(z_0)A_{12}(0,\mathbf{w_0}U)\overline{U^t}+e^{\phi(z_0)}\frac{\partial\phi}{\partial z^t}(z_0)w_0UA_{22}(0,\mathbf{w_0}U)\overline{U^t},
\end{equation}
\begin{equation}\label{e7.11}
  A_{21}(z_0,w_0)=e^{\frac{1}{2}\phi(z_0)} UA_{21}(0,\mathbf{w_0}U)\overline{\left(\frac{\partial\gamma}{\partial z^t}(z_0)\right)^t}+e^{\phi(z_0)}UA_{22}(0,\mathbf{w_0}U)\overline{U^t}\;\overline{w_0^t}\frac{\partial\phi}{\partial\bar{z}}(z_0)
\end{equation}
and
\begin{equation}\label{e7.12}
   A_{22}(z_0,w_0)=e^{\phi(z_0)}UA_{22}(0,\mathbf{w_0}U)\overline{U^t}.
\end{equation}

Let $z_0=0$ in \eqref{e7.12}, it follows that
\begin{equation*}
 A_{22}(0,w_0)=UA_{22}(0,w_0U)\overline{U^t}.
\end{equation*}
According to Lemma \ref{Le:7.2} below, there is a function $F$ such that
\begin{equation*}
 A_{22}(0,w)=F'(\|w\|^2)I_{d_0}+F''(\|w\|^2)\overline{w^t}w.
\end{equation*}
Thus
\begin{equation}\label{e7.13}
  A_{22}(z,w)=e^{\phi(z)}\left(F'(\rho)I_{d_0}+e^{\phi(z)}F''(\rho)\overline{w^t}w\right).
\end{equation}

Set $z_0=0$ and $U=I_{d_0}$ in \eqref{e7.10}, by $\phi(0)=0$ and $\frac{\partial\phi}{\partial z^t}(0)=0$, we have
\begin{equation*}
  A_{12}(0,w_0)=\frac{\partial\gamma}{\partial z^t}(0)A_{12}(0,w_0)
\end{equation*}
for all $\gamma\in \mathrm{Aut}(\Omega)$ with $\gamma(0)=0$. So $A_{12}(0,w_0)=0$. Combining \eqref{e7.10} and \eqref{e7.13}, we get
\begin{equation}\label{e7.14}
  A_{12}(z,w)=e^{\phi(z)}\left(F'(\rho)+\rho F''(\rho)\right)\frac{\partial\phi}{\partial z^t}(z)w.
\end{equation}
For the same argument, we also have
\begin{equation}\label{e7.15}
  A_{21}(z,w)=e^{\phi}(F'(\rho)+\rho F''(\rho))\overline{w^t}\frac{\partial\phi}{\partial \bar{z}}.
\end{equation}

Substituting $A_{12}(0,w)=0$, $A_{21}(0,w)=0$ and \eqref{e7.13} into \eqref{e7.9}, then
\begin{eqnarray}
\nonumber  A_{11}(z_0,w_0) &=& \frac{\partial\gamma}{\partial z^t}(z_0)A_{11}(0,\mathbf{w_0}U)\overline{\left(\frac{\partial\gamma}{\partial z^t}(z_0)\right)^t} \\
\label{e7.16}   & & +\rho(F'(\rho)+\rho F''(\rho))\frac{\partial\phi}{\partial z^t}(z_0)\frac{\partial\phi}{\partial\bar{z}}(z_0),
\end{eqnarray}
here $\gamma(z_0)=0$ and $\rho=e^{\phi(z_0)}\|w_0\|^2$.

Let $z_0=0$, \eqref{e7.16} implies that
\begin{equation*}
 A_{11}(0,w)=\frac{\partial\gamma}{\partial z^t}A_{11}(0,wU)\overline{\left(\frac{\partial\gamma}{\partial z^t}\right)^t}
\end{equation*}
for all $U\in\mathcal{U}(d_0)$, $\gamma\in \mathrm{Aut}(\Omega)$ with $\gamma(0)=0$. Therefore, by Schur's lemma, there exists a function $P$  such that
\begin{equation*}
   A_{11}(0,w)=\mu P(\|w\|^2)I_{d}.
\end{equation*}

Since
\begin{equation*}
 \partial\bar{\partial}(\phi\circ\gamma)=\partial\bar{\partial}\phi
\end{equation*}
for all $\gamma\in \mathrm{Aut}(\Omega)$, using $\frac{\partial^2\phi}{\partial z^t\partial\bar{z}}(0)=\mu I_d$, it follows that
\begin{equation*}
 \frac{\partial^2\phi}{\partial z^t\partial\bar{z}}(z_0)=\mu\frac{\partial\gamma}{\partial z^t}(z_0)\overline{\left(\frac{\partial\gamma}{\partial z^t}(z_0)\right)^t}
\end{equation*}
for all $\gamma\in \mathrm{Aut}(\Omega)$ with $\gamma(z_0)=0$. Hence, by \eqref{e7.16}, we obtain
\begin{equation}\label{e7.17}
A_{11}(z,w)= P(\rho)\frac{\partial^2\phi}{\partial z^t\partial\bar{z}}+\rho(F'(\rho)+\rho F''(\rho))\frac{\partial\phi}{\partial z^t}(z_0)\frac{\partial\phi}{\partial\bar{z}}(z_0).
\end{equation}

Now, we set $\Psi=\Phi-F(\rho)$, then
\begin{equation*}
  A_{11}=\frac{\partial^2\Psi}{\partial z^t\partial\bar{z}}+\rho F'(\rho)\frac{\partial^2\phi}{\partial z^t\partial\bar{z}}
  +\rho(F'(\rho)+\rho F''(\rho))\frac{\partial\phi}{\partial z^t}\frac{\partial\phi}{\partial\bar{z}},
\end{equation*}
\begin{equation*}
  A_{12}=\frac{\partial^2\Psi}{\partial z^t\partial\bar{w}}+e^{\phi}\left(F'(\rho)+\rho F''(\rho)\right)\frac{\partial\phi}{\partial z^t}w,
\end{equation*}
\begin{equation*}
  A_{21}=\frac{\partial^2\Psi}{\partial w^t\partial\bar{z}}+e^{\phi}(F'(\rho)+\rho F''(\rho))\overline{w^t}\frac{\partial\phi}{\partial \bar{z}}
\end{equation*}
and
\begin{equation*}
  A_{22}=\frac{\partial^2\Psi}{\partial w^t\partial\bar{w}}+e^{\phi}\left(F'(\rho)I_{d_0}+e^{\phi(z)}F''(\rho)\overline{w^t}w\right).
\end{equation*}
These combine with \eqref{e7.17}, \eqref{e7.14}, \eqref{e7.15} and \eqref{e7.13}, we get
\begin{equation}\label{e7.18}
 \frac{\partial^2\Psi}{\partial z^t\partial\bar{z}}=(P(\rho)-\rho F'(\rho))\frac{\partial^2\phi}{\partial z^t\partial\bar{z}}
\end{equation}
and
\begin{equation}\label{e7.19}
  \frac{\partial^2\Psi}{\partial z^t\partial\bar{w}}=0,\;\frac{\partial^2\Psi}{\partial w^t\partial\bar{z}}=0,\;\frac{\partial^2\Psi}{\partial w^t\partial\bar{w}}=0.
\end{equation}

From \eqref{e7.19}, the function $\Psi$ must be the following form
\begin{equation*}
 \Psi=\Psi_1(z,\bar{z})+\Psi_2(z,w)+\Psi_3(\bar{z},\bar{w}),
\end{equation*}
by \eqref{e7.18}, we have
\begin{equation*}
 \frac{\partial^2\Psi_1}{\partial z^t\partial\bar{z}}=(P(\rho)-\rho F'(\rho))\frac{\partial^2\phi}{\partial z^t\partial\bar{z}},
\end{equation*}
thinks to $\rho(z,w)=e^{\phi(z)}\|w\|^2$, so $P(\rho)-\rho F'(\rho)$ is a constant.  Finally, let $\nu=P(0)$,  then
 \begin{equation*}
 \partial\bar{\partial}(\nu\phi+F(\rho))=\partial\bar{\partial}\Phi.
\end{equation*}

$\mathbf{Step 2.}$  We prove that the function $F$ with $F(0)=0$ and the number $\nu$ are unique.

We only show that if $$ \partial\bar{\partial}(\lambda\phi+Q(\rho))\equiv 0, \;Q(0)=0,$$ then $\lambda=0$ and $Q\equiv 0$.

Since
\begin{equation*}
\frac{\partial^2(\lambda\phi+Q(\rho))}{\partial z^t\partial \bar{z}}=(\lambda+\rho Q'(\rho))\frac{\partial^2\phi}{\partial z^t\partial \bar{z}}+\rho(Q'(\rho)+\rho Q''(\rho))\frac{\partial\phi}{\partial z^t}\frac{\partial\phi}{\partial \bar{z}}=0,
\end{equation*}
let $\rho=0$, which gives
\begin{equation*}
  \lambda\frac{\partial^2\phi}{\partial z^t\partial \bar{z}}=0,
\end{equation*}
so $\lambda=0$.

By $\partial\bar{\partial}(Q(\rho))\equiv 0$, we obtain
\begin{equation*}
 \frac{\partial^2Q}{\partial w^t\partial \bar{w}}= e^{\phi}\left(Q'(\rho)I_{d_0}+e^{\phi}Q''(\rho)\bar{w}^tw\right)=0,
\end{equation*}
thus $Q'(\rho)=0$ for $d_0>1$ and $Q'(\rho)+\rho Q''(\rho)=0$ for $d_0=1$. So $Q\equiv 0$ for $d_0>1$ and $\rho Q'(\rho)=c$ for $d_0=1$.
For $d_0=1$, let $\rho=0$, we have $c=0$, hence $Q\equiv 0$.

$\mathbf{Step 3.}$  We prove that $F$ with $F(0)=0$ is a real function and $\nu>0$.

Using
\begin{equation*}
\frac{\partial^2(\nu\phi+F(\rho))}{\partial z^t\partial \bar{z}}=(\nu+\rho F'(\rho))\frac{\partial^2\phi}{\partial z^t\partial \bar{z}}+\rho(F'(\rho)+\rho F''(\rho))\frac{\partial\phi}{\partial z^t}\frac{\partial\phi}{\partial \bar{z}}
\end{equation*}
is positive definite, we get $\nu\frac{\partial^2\phi}{\partial z^t\partial \bar{z}}$ is positive definite, then $\nu>0$.

Since
\begin{equation*}
\frac{\partial^2(\nu\phi+F(\rho))}{\partial w^t\partial \bar{w}}= e^{\phi}\left(F'(\rho)I_{d_0}+e^{\phi}F''(\rho)\bar{w}^tw\right)
\end{equation*}
is positive definite, then
\begin{equation*}
  (F-\overline{F})'I_{d_0}+e^{\phi}(F-\overline{F})''\bar{w}^tw=0,
\end{equation*}
which implies $F=\overline{F}$, namely $F$ is a real function.
\end{proof}

\begin{Lemma}\label{Le:7.2}
Let $\Psi$ be the continuous differentiable function of order 2 on $\mathbb{B}^{m}$. If for all $U\in \mathcal{U}(m)$,
\begin{equation*}
  \partial\bar{\partial}(\Psi\circ U)=\partial\bar{\partial}\Psi,
\end{equation*}
namely
\begin{equation*}
  \frac{\partial^2\Psi}{\partial w^t\partial\bar{w}}(w,\bar{w})=U\frac{\partial^2\Psi}{\partial w^t\partial\bar{w}}(wU,\overline{wU})\overline{U^t},
\end{equation*}
then there is a function $F$ such that
\begin{equation*}
 \partial\bar{\partial}(F(\|w\|^2))=\partial\bar{\partial}\Psi.
\end{equation*}
\end{Lemma}

\begin{proof}[Proof]
We only prove for the case $m=1,2$, the proof of the case $m>2$ is the same as the proof of the case $m=2$, we omit its the proof.

(i) For $m=1$, we have
\begin{equation*}
  \frac{\partial^2\Psi}{\partial w^t\partial\bar{w}}(w,\bar{w})=\frac{\partial^2\Psi}{\partial w^t\partial\bar{w}}(e^{\sqrt{-1}\theta}w,e^{-\sqrt{-1}\theta}\bar{w}),\;\forall\;\theta\in\mathbb{R},
\end{equation*}
then there exists a function $f$ that satisfies
\begin{equation*}
  \frac{\partial^2\Psi}{\partial w^t\partial\bar{w}}(w,\bar{w})=f(|w|^2).
\end{equation*}
Let
\begin{equation*}
  Q(x)=\left\{\begin{array}{ll}
                \frac{1}{x}\int_0^xf(t)dt, & x\in(0,1), \\
                f(0), & x=0
              \end{array}
  \right.
\end{equation*}
and
\begin{equation*}
  F(x)=\int_0^xQ(t)dt,\;x\in[0,1).
\end{equation*}
It easy to see that $F$ satisfies
\begin{equation*}
 \partial\bar{\partial}(F(\|w\|^2))=\partial\bar{\partial}\Psi.
\end{equation*}

(ii) For $m=2$, let
\begin{equation*}
  f(w,\bar{w})=\left(
                      \begin{array}{cc}
                        f_{11} & f_{12} \\
                        f_{21} & f_{22} \\
                      \end{array}
                    \right)(w,\bar{w})
  = \frac{\partial^2\Psi}{\partial w^t\partial\bar{w}}(w),
\end{equation*}
thus
\begin{equation*}
  f(wU,\bar{w}\overline{U})=\overline{U^t}f(w,\bar{w})U,\;\forall \;U\in \mathcal{U}(2).
\end{equation*}
The following we will prove that there exist  functions $r$ and $s$ satisfying
\begin{equation}\label{e7.20.0}
   f(w,\bar{w})=r(\|w\|^2)I_2+s(\|w\|^2)\overline{w^t}w.
\end{equation}

Let $U=e^{tX}\in \mathcal{U}(2)$, $X=(X_{ij})_{1\leq i,j\leq 2}$ and $t\in\mathbb{R}$. Differentiating the left and right-hand sides at $t=0$ below,
\begin{equation*}
   f(we^{tX},\bar{w}e^{t\overline{X}})=e^{-tX}f(w,\bar{w})e^{tX},
\end{equation*}
we find
\begin{equation}\label{e7.20}
  \sum_{ij=1}^2\left(w_iX_{ij}\frac{\partial f}{\partial w_j}-\overline{w_j}X_{ij}\frac{\partial f}{\partial \overline{w_i}}\right)=fX-Xf.
\end{equation}

Setting
$$X=\sqrt{-1}\left(
                      \begin{array}{cc}
                        1 & 0 \\
                        0 & 0 \\
                      \end{array}
                    \right),\;\sqrt{-1}\left(
                      \begin{array}{cc}
                        0 & 0 \\
                        0 & 1 \\
                      \end{array}
                    \right),\;\sqrt{-1}\left(
                      \begin{array}{cc}
                        0 & 1 \\
                        1 & 0 \\
                      \end{array}
                    \right),\;\left(
                      \begin{array}{cc}
                        0 & -1 \\
                        1 & 0 \\
                      \end{array}
                    \right)
$$
in \eqref{e7.20}, we have
\begin{equation}\label{e7.21}
  w_1\frac{\partial f}{\partial w_1}-\overline{w_1}\frac{\partial f}{\partial\overline{w_1}}=\left(
                                                                                               \begin{array}{cc}
                                                                                                 0 & -f_{12} \\
                                                                                                 f_{21} & 0 \\
                                                                                               \end{array}
                                                                                             \right),
\end{equation}
\begin{equation}\label{e7.22}
  w_2\frac{\partial f}{\partial w_2}-\overline{w_2}\frac{\partial f}{\partial\overline{w_2}}=\left(
                                                                                               \begin{array}{cc}
                                                                                                 0 & f_{12} \\
                                                                                                 -f_{21} & 0 \\
                                                                                               \end{array}
                                                                                             \right),
\end{equation}
\begin{equation}\label{e7.23}
  w_1\frac{\partial f}{\partial w_2}+ w_2\frac{\partial f}{\partial w_1}-\overline{w_1}\frac{\partial f}{\partial\overline{w_2}}-\overline{w_2}\frac{\partial f}{\partial\overline{w_1}}=\left(
                                                                                               \begin{array}{cc}
                                                                                                 f_{12}-f_{21} & f_{11}-f_{22} \\
                                                                                                 f_{22}-f_{11} & f_{21}-f_{12} \\
                                                                                               \end{array}
                                                                                             \right),
\end{equation}
\begin{equation}\label{e7.24}
  -w_1\frac{\partial f}{\partial w_2}+ w_2\frac{\partial f}{\partial w_1}-\overline{w_1}\frac{\partial f}{\partial\overline{w_2}}+\overline{w_2}\frac{\partial f}{\partial\overline{w_1}}=\left(
                                                                                               \begin{array}{cc}
                                                                                                 f_{12}+f_{21} & -f_{11}+f_{22} \\
                                                                                                 -f_{11}+f_{22} & -f_{12}-f_{21} \\
                                                                                               \end{array}
                                                                                             \right),
\end{equation}
 respectively.

 Using \eqref{e7.23} and \eqref{e7.24}, we get
\begin{equation}\label{e7.25}
 w_2\frac{\partial f}{\partial w_1}-\overline{w_1}\frac{\partial f}{\partial\overline{w_2}}=\left(
                                                                                               \begin{array}{ll}
                                                                                                 f_{12} & 0 \\
                                                                                                 f_{22}-f_{11} & -f_{12} \\
                                                                                               \end{array}
                                                                                             \right)
\end{equation}
and
\begin{equation}\label{e7.26}
 w_1\frac{\partial f}{\partial w_2}-\overline{w_2}\frac{\partial f}{\partial\overline{w_1}}=\left(
                                                                                               \begin{array}{ll}
                                                                                                 -f_{21} & f_{11}-f_{22} \\
                                                                                                 0 &  f_{21} \\
                                                                                               \end{array}
                                                                                             \right).
\end{equation}

It is well known that the solutions of the equations
\begin{equation*}
  x\frac{\partial g_1 }{\partial x}- y\frac{\partial g_1 }{\partial y}=0,\; x\frac{\partial g_2 }{\partial x}- y\frac{\partial g_2 }{\partial y}=g_2
\end{equation*}
are
\begin{equation*}
  g_1=h_1(xy),\;g_2=xh_2(xy),
\end{equation*}
 respectively. Then from \eqref{e7.21} and \eqref{e7.22}, $f$ may be written as
\begin{equation}\label{e7.27}
  f(w,\bar{w})=\left(
                 \begin{array}{ll}
                   h_{11}(|w_1|^2,|w_2|^2) & \overline{w_1}w_2h_{12}(|w_1|^2,|w_2|^2) \\
                   \overline{w_2}w_1h_{21}(|w_1|^2,|w_2|^2) & h_{22}(|w_1|^2,|w_2|^2) \\
                 \end{array}
               \right).
\end{equation}

Let $u=w_1\overline{w_1}$, $v=w_2\overline{w_2}$. Substituting \eqref{e7.27} into \eqref{e7.25} and \eqref{e7.26}, we get
\begin{eqnarray*}
   & &  \left(
     \begin{array}{ll}
     \overline{w_1}w_2\left(\frac{\partial h_{11}}{\partial u}-\frac{\partial h_{11}}{\partial v}\right)   & (\overline{w_1}w_2)^2\left(\frac{\partial h_{12}}{\partial u}-\frac{\partial h_{12}}{\partial v}\right) \\
      (|w_2|^2-|w_1|^2)h_{21}+|w_1w_2|^2\left(\frac{\partial h_{21}}{\partial u}-\frac{\partial h_{21}}{\partial v}\right)  & \overline{w_1}w_2\left(\frac{\partial h_{22}}{\partial u}-\frac{\partial h_{22}}{\partial v}\right) \\
     \end{array}
   \right) \\
   &=&  \left(
          \begin{array}{ll}
           \overline{w_1}w_2h_{12}  & 0 \\
             h_{22}-h_{11}& -\overline{w_1}w_2h_{12}  \\
          \end{array}
        \right)
\end{eqnarray*}
and
\begin{eqnarray*}
   & &  \left(
     \begin{array}{ll}
     \overline{w_2}w_1\left(\frac{\partial h_{11}}{\partial v}-\frac{\partial h_{11}}{\partial u}\right)   & (|w_1|^2-|w_2|^2)h_{12}+|w_1w_2|^2\left(\frac{\partial h_{12}}{\partial v}-\frac{\partial h_{12}}{\partial u}\right) \\
      (\overline{w_2}w_1)^2\left(\frac{\partial h_{21}}{\partial v}-\frac{\partial h_{21}}{\partial u}\right)  & \overline{w_2}w_1\left(\frac{\partial h_{22}}{\partial v}-\frac{\partial h_{22}}{\partial u}\right) \\
     \end{array}
   \right) \\
   &=&  \left(
          \begin{array}{ll}
           -\overline{w_2}w_1h_{21}  &  h_{11}-h_{22} \\
           0 & \overline{w_2}w_1h_{21}  \\
          \end{array}
        \right).
\end{eqnarray*}
That is
\begin{eqnarray}
\label{e7.28}         \frac{\partial h_{11}}{\partial u}-\frac{\partial h_{11}}{\partial v} &=&h_{12},  \\
\label{e7.29}        \frac{\partial h_{12}}{\partial u}-\frac{\partial h_{12}}{\partial v}  &=&0,  \\
\label{e7.30}          (v-u)h_{21}+uv\left(\frac{\partial h_{21}}{\partial u}-\frac{\partial h_{21}}{\partial v}\right) &=& h_{22}-h_{11}, \\
\label{e7.31}         \frac{\partial h_{22}}{\partial u}-\frac{\partial h_{22}}{\partial v} &=&  =-h_{12},
\end{eqnarray}
and
\begin{eqnarray}
\label{e7.32}  \frac{\partial h_{11}}{\partial u}-\frac{\partial h_{11}}{\partial v}  &=&h_{21},  \\
\label{e7.33}    \frac{\partial h_{21}}{\partial u}-\frac{\partial h_{21}}{\partial v} &=&0,  \\
\label{e7.34}  (v-u)h_{12}+uv\left(\frac{\partial h_{12}}{\partial u}-\frac{\partial h_{12}}{\partial v}\right) &=& h_{22}-h_{11}, \\
\label{e7.35}  \frac{\partial h_{22}}{\partial u}-\frac{\partial h_{22}}{\partial v}  &=& -h_{21}.
\end{eqnarray}

 Combining \eqref{e7.28}, \eqref{e7.29}, \eqref{e7.32} and \eqref{e7.33}, we obtain that there exists a function $s$ such that
 \begin{equation}\label{e7.36}
   h_{12}(u,v)=h_{21}(u,v)=s(u+v).
 \end{equation}

Comparing \eqref{e7.31}, \eqref{e7.32}, we have
\begin{equation*}
    \frac{\partial (h_{11}+h_{22})}{\partial u}-\frac{\partial (h_{11}+h_{22})}{\partial v}=0,
\end{equation*}
so there is a function $p$ such that
\begin{equation}\label{e7.37}
  h_{11}+h_{22}=p(u+v).
\end{equation}

Substituting \eqref{e7.36} into \eqref{e7.30},  it gives that
\begin{equation}\label{e7.38}
  h_{11}-h_{22}=(u-v)s(u+v).
\end{equation}

By \eqref{e7.37} and \eqref{e7.38}, we have
\begin{equation}\label{e7.39}
  h_{11}=r(u+v)+us(u+v),\; h_{22}=r(u+v)+vs(u+v),
\end{equation}
here
\begin{equation*}
  r(x)=\frac{1}{2}p(x)-\frac{1}{2}xs(x).
\end{equation*}

Substituting \eqref{e7.36} and  \eqref{e7.39} into \eqref{e7.27}, we obtain \eqref{e7.20.0}.

Now let
$$q(x)=\int_0^x(x-t)s(t)dt\;(0\leq x<1) ,\;\Psi_1(w,\bar{w})=\Psi(w,\bar{w})-q(\|w\|^2).$$
Thus \eqref{e7.20.0} gives that
\begin{equation}\label{e7.41}
  \frac{\partial^2\Psi_1}{\partial w_1\partial \overline{w_1}}(w,\bar{w})=\frac{\partial^2\Psi_1}{\partial w_2\partial \overline{w_2}}(w,\bar{w})=r(\|w\|^2)-q'(\|w\|^2)
\end{equation}
and
\begin{equation}\label{e7.42}
  \frac{\partial^2\Psi_1}{\partial w_1\partial \overline{w_2}}=\frac{\partial^2\Psi_1}{\partial w_2\partial \overline{w_1}}=0.
\end{equation}

From \eqref{e7.42} and \eqref{e7.41}, it follows that
\begin{equation*}
  \Psi_1(w,\bar{w})=H_1(w_1,\overline{w_1})+H_2(w_2,\overline{w_2})+H_3(w_1,w_2)+H_4({\overline{w_1},\overline{w_2}})
\end{equation*}
and
\begin{equation*}
  \frac{\partial^2 H_1}{\partial w_1\partial \overline{w_1}}(w_1,\overline{w_1})=r(\|w\|^2)-q'(\|w\|^2),
\end{equation*}
This indicates that $r(\|w\|^2)-q'(\|w\|^2)$ is a constant.

Let $c=r(\|w\|^2)-q'(\|w\|^2)$ and $F(x)=cx+q(x)$, we have
\begin{equation*}
 \partial\bar{\partial}(F(\|w\|^2))=\partial\bar{\partial}\Psi.
\end{equation*}
\end{proof}

Now we give a proof of Theorem \ref{Th:1.1}.

\begin{proof}[Proof of Theorem \ref{Th:1.1}]
From Lemma \ref{Le:7.1}, the K\"{a}hler potential $\Phi$ of $\mathcal{G}$-invariant the K\"{a}hler  metric $g$ can be  selected as
\begin{equation*}
  \Phi=\nu\phi+\nu F(\rho),\;\rho=e^{\phi(z)}\|w\|^2.
\end{equation*}

Let $g_F$ be a  K\"{a}hler  metric  on the domain $\Omega(\mu,d_0)$ associated with  the K\"{a}hler form
$$\omega_{g_F}=\frac{\sqrt{-1}}{2\pi}\partial\overline{\partial}(\phi+F(\rho)),$$
 namely $g=\nu g_F$, thus $\mathbf{a}_1(g)=\frac{1}{\nu}\mathbf{a}_1(g_F)$ and $\mathbf{a}_2(g)=\frac{1}{\nu^2}\mathbf{a}_2(g_F)$, here $\mathbf{a}_j(g)$ are the coefficients of the Bergman function expansion with respect to the metric $g$.

It is easy to see that completeness of the metric $g$ is the same as with the metric $g_F$

 We denote the rank $r$, the characteristic multiplicities $a,b$, the dimension $d$,  the genus $p$, and the generic norm $N(z,\overline{w})$ for the Cartan domain $\Omega$. Then the Bergman functions for the Cartan domain $\Omega$  with respect to the potential $\phi(z)=-\mu\ln N(z,\bar{z})$ are
\begin{equation}\label{e7.43}
  \epsilon(\alpha;z)=\frac{1}{\mu^d}\prod_{j=1}^r\left(\mu\alpha-p+1+(j-1)\frac{a}{2}\right
   )_{1+b+(r-j)a}
\end{equation}
for all $\alpha>\frac{p-1}{\mu}$, where $(x)_k=\frac{\Gamma(x+k)}{\Gamma(x)}$. In particular
\begin{equation*}
  \epsilon(\alpha;z)=\prod_{j=1}^d\left(\alpha-\frac{j}{\mu}\right)
\end{equation*}
for $\Omega=\mathbb{B}^d$ and $\alpha>\frac{d}{\mu}$.

Using \eqref{e7.43} and Lemma 3.3 of \cite{FT}, we obtain that the first two coefficients of the Bergman function expansion for the Cartan domain $\Omega$ with respect to the potential $\phi(z)=-\mu\ln N(z,\bar{z})$ are
\begin{equation}\label{eq2.50}
   a_1=-\frac{dp}{2\mu}
\end{equation}
and
\begin{eqnarray}
\nonumber a_2  &=& \frac{1}{2\mu^2}\left\{\frac{d^2p^2}{4}-\frac{r(p-1)p(2p-1)}{6}+\frac{r(r-1)a(3p^2-3p+1)}{12}\right. \\
\label{eq2.51}&&-\left.\frac{(r-1)r(2r-1)a^2(p-1)}{24}+\frac{r^2(r-1)^2a^3}{48}\right\}.
\end{eqnarray}

Notice that $\Omega=\mathbb{B}$, $p=2$, $r=1$ and $a=2$ for $d=1$, so by \eqref{eq2.50} and \eqref{eq2.51} we get
$$a_1=-\frac{1}{\mu},\;a_2=0.$$

From the proof of Theorem 1.3 of \cite{FT}, we have that
$$a_1=-\frac{d(d+1)}{2},\;a_2=\frac{(d-1)d(d+1)(3d+2)}{24}$$
if and only if $\Omega=\mathbb{B}^d$ and $\mu=1$.

According to Theorem \ref{apth:3.3}, both the coefficients $\mathbf{a}_1(g_F)$ and $\mathbf{a}_2(g_F)$ are constants iff
for $d>1$,
$$\Omega=\mathbb{B}^d,\mu=1,\omega_{g_F}=-\frac{\sqrt{-1}}{2\pi}\partial\overline{\partial}\ln(1-\|z\|^2-\|w\|^2);$$
for $d=1$,
$$\Omega=\mathbb{B},\omega_{g_F}=-\frac{\sqrt{-1}}{2\pi}\partial\overline{\partial}\left\{\mu\ln(1-|z|^2)+\frac{(d_0+1)\mu}{d_0\mu+1}\ln\left(1-\frac{\|w\|^2}{(1-|z|^2)^{\mu}}\right)\right\}.$$
This completes the proof.
\end{proof}

\vskip 20pt

 \noindent\textbf{Acknowledgments}\quad The author would like to thank the referees for many helpful suggestions.  The author was
supported by the Scientific Research Fund of Sichuan Provincial Education Department (No.18ZB0272).


\addcontentsline{toc}{section}{References}
\phantomsection
\renewcommand\refname{References}
\small{
}
\clearpage
\end{document}